\documentclass[12pt]{article}%
\usepackage{amsmath}
\usepackage{amssymb}
\usepackage{amsfonts}
\usepackage{hyperref}
\usepackage{tikz}
\usepackage{CJK}
\usepackage{verbatim}
\usepackage[normalem]{ulem}%
\setcounter{MaxMatrixCols}{30}%
\usepackage{graphicx}
\providecommand{\U}[1]{\protect\rule{.1in}{.1in}}
\usetikzlibrary{matrix,arrows}
\setlength{\textheight}{8.9in}
\setlength{\textwidth}{6.5in}
\setlength{\headheight}{12pt}
\setlength{\headsep}{25pt}
\setlength{\footskip}{25pt}
\setlength{\oddsidemargin}{0.10in}
\setlength{\evensidemargin}{0.10in}
\setlength{\marginparwidth}{0.08in}
\setlength{\marginparsep}{0.001in}
\setlength{\marginparpush}{0.4\parindent}
\setlength{\topmargin}{-0.54in}
\setlength{\columnsep}{10pt}
\setlength{\columnseprule}{0pt}
\setlength{\parindent}{3ex}
\vfuzz2pt
\hfuzz2pt
\numberwithin{equation}{section}
\newtheorem{theorem}{Theorem}[section]

\newtheorem{corollary}[theorem]{Corollary}

\newtheorem{definition}[theorem]{Definition}
\newtheorem{example}[theorem]{Example}

\newtheorem{lemma}[theorem]{Lemma}

\newtheorem{proposition}[theorem]{Proposition}
\newtheorem{remark}[theorem]{Remark}

\newenvironment{proof}[1][Proof]{\noindent\textbf{#1.} }{\ \rule{0.5em}{0.5em}}

\newcommand{\End}{\text{End}\,}

\textwidth15cm
\usetikzlibrary{arrows}
\begin{document}

\title{Weak Hopf Algebras, Smash Products and Applications to Adjoint-Stable Algebras }
\author{Zhimin Liu \thanks{E-mail: liuzm@fudan.edu.cn} \thanks{Project funded by China
Postdoctoral Science Foundation grant 2019M661327}\hspace{1cm} Shenglin Zhu
\thanks{CONTACT: mazhusl@fudan.edu.cn}\\Fudan University, Shanghai 200433, China}
\date{}
\maketitle

\begin{abstract}
For a semisimple quasi-triangular Hopf algebra $\left(  H,R\right)  $ over a
field $k$ of characteristic zero, and a strongly separable quantum commutative
$H$-module algebra $A$ over which the Drinfeld element of $H$ acts trivially,
we show that $A\#H$ is a weak Hopf algebra, and it can be embedded into a weak
Hopf algebra $\operatorname{End}A^{\ast}\otimes H$. With these structure,
$_{A\#H}\operatorname{Mod}$ is the monoidal category introduced by Cohen and
Westreich, and $_{\operatorname{End}A^{\ast}\otimes H}\mathcal{M}$ is tensor
equivalent to $_{H}\mathcal{M}$. If $A$ is in the M{\"{u}}ger center of
$_{H}{\mathcal{M}}$, then the embedding is a quasi-triangular weak Hopf
algebra morphism. This explains the presence of a subgroup inclusion in the
characterization of irreducible Yetter-Drinfeld modules for a finite group algebra.

\end{abstract}

\textbf{KEYWORDS}: Weak Hopf Algebras, Quantum Commutative, Smash Products, Almost-Triangularity

\textbf{2020 MATHEMATICS SUBJECT CLASSIFICATION}: 16T05, 16S40

\section{Introduction}

Let $\left(  H,R\right)  $ be a semisimple quasi-triangular Hopf algebra over
an algebraically closed field $k$ of characteristic zero. In the braided
tensor category $_{H}\mathcal{M}$, Majid~\cite{Majid1991Braided} constructed a
Hopf algebra $H_{R}$, called the transmuted braided group of $H$. Simple left
comodules over $H_{R}$ are used by the authors~\cite{LiuZhu2019On} to
characterize irreducible Yetter-Drinfeld modules in ${}_{H}^{H}\mathcal{YD}$.
To accomplish that purpose, a notion of $R$-adjoint-stable algebra $N_{W}$ was
introduced for each left $H_{R}$-comodule $W$, and each irreducible
Yetter-Drinfeld module in ${}_{H}^{H}\mathcal{YD}$ can be interpreted by a
simple left $H_{R}$-comodule $W$ and a simple right $N_{W}$-module.

For a simple left $H_{R}$-comodule $W$, the $R$-adjoint-stable algebra $N_{W}
$ is always of Frobenius-type (\cite[Proposition~5.15]{LiuZhu2019On}), i.e.,
the dimension of any simple module divides the dimension of $N_{W}$. There are
some structural results on the $R$-adjoint-stable algebra in certain special
cases. If $\left(  H,R\right)  =\left(  kG,1\otimes1\right)  $ for a finite
group $G$, every simple left $H$-comodule $W$ has the form $W=kg $ for some
$g\in G$, and $N_{W}=kC(g)$ is a Hopf subalgebra of $H$, where $C(g)$ is the
centralizer subgroup of $g$ in $G $. If $\left(  H,R\right)  $ is
factorizable, then the algebra $N_{W}$ of any simple left $H_{R}$-comodule $W$
is isomorphic to $H^{op}$ (see~\cite{LiuZhu2022Factorizable}).

It is natural to ask how close is the $R$-adjoint-stable algebra of a left
$H_{R}$-comodule to a Hopf algebra.

A natural connection between an $R$-adjoint-stable algebra and $H$-smash
product can be established. Any simple left $H_{R}$-comodule $W$ is affiliated
to a minimal $H$-module subcoalgebra $D$ of $H_{R}$, and a full matrix algebra
of $N_{W}$ is isomorphic to a full matrix algebra of $N_{D}$, which is
isomorphic to $D^{\ast}\#H^{op}$ (see Proposition~\ref{prop-matrix-equivalent}%
). In~\cite{Cohen1994Supersymmetry}, Cohen and Westreich introduced a monoidal
structure on the category $_{A\#H}\operatorname{Mod}$ of left $A\#H$-modules
for a quantum commutative $H$-module algebra $A$. Their result provides other
evidence that the $R$-adjoint-stable algebras are related to weak Hopf
algebras, in the case when $H_{R}{}^{\ast}$ is quantum commutative.

The quantum commutativity of $H_{R}{}^{\ast}$ can be shown to be equivalent to
the almost triangularity of $\left(  H,R\right)  $, a concept introduced
in~\cite{LiuZhu2007Almost}. For a semisimple almost-triangular Hopf algebra
$\left(  H,R\right)  $, we show that the $R$-adjoint-stable algebra $N_{D}$
for any minimal $H$-module subcoalgebra $D$ of $H_{R}$ has a natural structure
of almost-triangular weak Hopf algebra.

We do this in a more general way by studying the weak Hopf algebra structure
on $A\#H$, where $\left(  H,R\right)  $ is a semisimple quasi-triangular Hopf
algebra and $A$ is a strongly separable quantum commutative $H$-module
algebra. If the Drinfeld element $u=S\left(  R^{2}\right)  R^{1}$ of $H$ acts
on $A$ trivially, then a natural weak Hopf algebra structure on $A\#H$ can be
constructed, which leads to the monoidal structure on $_{A\#H}%
\operatorname{Mod}$ given by Cohen and Westreich~\cite{Cohen1994Supersymmetry}%
. This construction also extends the notion of transformation groupoid. When
$\left(  H,R\right)  =\left(  kG,\ 1\otimes1\right)  $ where $G$ is a finite
group, then $A\#H$ is the groupoid algebra of a transformation groupoid. (One
may refer to~\cite{Renault1980Groupoid} for the definition of groupoid.)
Moreover, if $G$ acts on $A$ transitively then $A\#H\cong M_{\dim A}\left(
kG_{1}\right)  $, where $G_{1}$ is a suitable subgroup of $G$. So there is a
canonical weak Hopf algebra embedding $A\#kG\hookrightarrow M_{\dim A}\left(
kG\right)  $.

Similar statements can be established to Hopf algebras. We prove that if
$\left(  H,R\right)  $ is a semisimple quasi-triangular Hopf algebra and $A$
is a strongly separable quantum commutative $H$-module algebra, then
$\operatorname{End}A^{\ast}\otimes H$ has a natural structure of
quasi-triangular weak Hopf algebra. If $u$ acts on $A$ trivially, then there
is a canonical embedding $A\#H\rightarrow\operatorname{End}A^{\ast}\otimes H$
of weak Hopf algebras. If assume further that $A$ is in the M{\"{u}}ger center
of $_{H}\mathcal{M}$, then a quasi-triangular structure of $A\#H$ can be also
constructed, and the given embedding is also a quasi-triangular weak Hopf
algebra map. With the defined quasi-triangular weak Hopf algebra structure,
the braided tensor category ${}_{\operatorname{End}A^{\ast}\otimes
H}\mathcal{M}$ is braided tensor equivalent to $_{H}\mathcal{M}$. When $A$ is
$H$-simple, the Frobenius-Perron dimension is calculated for any object of
$_{A\#H}\mathcal{M}$.

If $H$ is a finite dimension Hopf algebra, then it is known that $H$ is a
natural quantum commutative $D\left(  H\right)  $-module algebra. This
statement for the case that $H$ is cocommutative Hopf algebra was firstly
proven by Cohen and Westreich~\cite{Cohen1994Supersymmetry}. We show that
$H\#D\left(  H\right)  \cong\mathcal{H}\left(  H\right)  \otimes H$ as
algebras, where $\mathcal{H}\left(  H\right)  $ is the Heisenberg double of
$H$, and establish a tensor category equivalence between $_{H\#D\left(
H\right)  }\mathcal{M}$ and $_{H}\mathcal{M}$.

For an almost-triangular Hopf algebra $\left(  H,R\right)  $ and a minimal
$H$-module subalgebra $D$ of $H_{R}$, $D^{\ast}$ is a quantum commutative
$H$-module algebra, and $N_{D}\cong D^{\ast}\#H^{op}$. If $H$ is semisimple,
then the adjoint action of the Drinfeld element $u$ on $H_{R}{}^{\ast}$ is
trivial, so the results can be applied. In this case, $N_{D}$ is also an
almost-triangular weak Hopf algebra, and therefore, the category $_{N_{D}%
}\mathcal{M}$ is an almost-symmetric fusion category. It follows that for any
irreducible Yetter-Drinfeld module $V$, $\dim V$ is divisible by the dimension
of the subcoalgebra $D_{V}$ of $H_{R}$ generated by $V$.

This paper is organized as follows.

In section 2, we recall some basic results on quasi-triangular Hopf algebras,
transmuted braided group, and Yetter-Drinfeld modules.

Section 3 is devoted to the construction of quasi-triangular weak Hopf algebra
structures on $A\#H$, where $A$ is a quantum commutative module algebra. We
embed the weak Hopf algebra $A\#H$ into $B=\operatorname{End}A^{\ast}\otimes
H$, and show that the category $_{B}\mathcal{M}$ is braided tensor equivalent
to $_{H}\mathcal{M}$.

In section 4 we first give a short exploration of the categorical counterpart
of almost-triangularity for Hopf algebras. We also apply the main results to
$R$-adjoint-stable algebras, and show that if $\left(  H,R\right)  $ is a
semisimple almost-triangular Hopf algebra, and $D$ is a minimal $H$-module
subcoalgebra of $H_{R}$, then $_{N_{D}}\mathcal{M}$ is an almost-symmetric
fusion category.

\section{Preliminaries}

\label{Sec_Preli}

Throughout this paper, $k$ will be a field of characteristic zero. All
algebras and coalgebras are over $k$. For a coalgebra $\left(  C,\Delta
,\varepsilon\right)  $ and a right $C$-comodule $\left(  M,\rho\right)  $, we
use Sweedler's notation, by writing $\Delta\left(  c\right)  =c_{\left(
1\right)  }\otimes c_{\left(  2\right)  }$ for $c\in C$ and $\rho\left(
m\right)  =m_{\left\langle 0\right\rangle }\otimes m_{\left\langle
1\right\rangle }$ for $m\in M$.

Given an algebra $A$, we denote the category of finite dimensional left
$A$-modules by $_{A}\mathcal{M}$. Let $\rightharpoonup$ and $\leftharpoonup$
be the left and right $A$-actions on $A^{\ast}$ via the transpose of
multiplications on $A$, that is, for $a,b\in A$, $a^{\ast}\in A^{\ast}$,
\[
\left\langle a\rightharpoonup a^{\ast},b\right\rangle =\left\langle a^{\ast
},ba\right\rangle ,\quad\left\langle a^{\ast}\leftharpoonup a,b\right\rangle
=\left\langle a^{\ast},ab\right\rangle .
\]

Let $H$ be a Hopf algebra. An $H$-(co)module algebra is called $H$-simple if
it contains no non-trivial $H$-(co)stable ideal. We recall the following lemma
obtained from the proof of ~\cite[Theorem 3.5]{skryabin2007projectivity}.

\begin{lemma}
[{\cite[Theorem 3.5]{skryabin2007projectivity}}]\label{lemma_skryabin}Let $H$
be a Hopf algebra and $A$ be a finite-dimensional semisimple right $H$-simple
comodule algebra. If $V\in\mathcal{M}_{A}^{H}$ or $V\in{}_{A}\mathcal{M}^{H}$
with $1\leq\dim V<\infty$, then there exists some positive integer $l$ such
that $V^{l}$ is a free $A$-module.
\end{lemma}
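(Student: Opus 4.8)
The plan is to use the semisimplicity of $A$ to reduce the assertion to a numerical statement about the blocks of $A$ and then to exploit $H$-simplicity, following the strategy of~\cite{skryabin2007projectivity}. Since $A$ is finite-dimensional and semisimple, every finite-dimensional $A$-module is projective, so $V$ is projective over $A$; this already gives the first half of the conclusion. Write $A=A_{1}\oplus\cdots\oplus A_{s}$ for the block decomposition, let $e_{1},\dots,e_{s}$ be the corresponding central primitive idempotents, $S_{i}$ the simple $A_{i}$-module and $D_{i}=\operatorname{End}_{A}(S_{i})$, so that $A\cong\bigoplus_{i}S_{i}^{\,n_{i}}$ as a right $A$-module with $n_{i}=\dim_{D_{i}}S_{i}$. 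By uniqueness of the decomposition of a semisimple module, a nonzero finite-dimensional $V\cong\bigoplus_{i}S_{i}^{\,m_{i}}$ has the property that $V^{l}$ is $A$-free for some $l\geq1$ if and only if the rationals $m_{i}/n_{i}=\dim_{k}(Ve_{i})/\dim_{k}(Ae_{i})$ are all equal; so everything comes down to showing that this ``normalised block rank'' does not depend on $i$.

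To that end I would encode the coaction $\rho=\rho_{A}\colon A\to A\otimes H$ combinatorially. Put $f_{ij}=(e_{i}\otimes1)\rho(e_{j})\in A_{i}\otimes H$; multiplicativity and the counit axiom give $\rho(e_{j})=\sum_{i}f_{ij}$, $\sum_{j}f_{ij}=e_{i}\otimes1$, $(\mathrm{id}\otimes\varepsilon)(f_{ij})=\delta_{ij}e_{i}$ and $f_{ij}f_{ij'}=\delta_{jj'}f_{ij}$, while coassociativity makes $(f_{ij})$ behave like a ``comatrix'': in the model case of commutative $A$ one has $f_{ij}=e_{i}\otimes h_{ij}$ with $\Delta(h_{ij})=\sum_{l}h_{il}\otimes h_{lj}$ and $\varepsilon(h_{ij})=\delta_{ij}$, so the matrix $(h_{ij})$ is invertible over $\operatorname{Mat}_{s}(H)$ with inverse $(S(h_{ij}))$, and in general there is an analogous invertibility involving the antipode. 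A two-sided ideal $\bigoplus_{l\in Y}A_{l}$ is $H$-costable exactly when $Y$ is closed under the relation $j\to i$ defined by $f_{ij}\neq0$; since $f_{ii}\neq0$ always (as $(\mathrm{id}\otimes\varepsilon)f_{ii}=e_{i}\neq0$), the $H$-simplicity of $A$ is precisely the statement that this digraph on $\{1,\dots,s\}$ is strongly connected. Granting the key point that any finite-dimensional relative Hopf module $V$ in $\mathcal{M}_{A}^{H}$ or ${}_{A}\mathcal{M}^{H}$ satisfies $m_{i}/n_{i}=m_{j}/n_{j}$ whenever $f_{ij}\neq0$, strong connectivity makes $m_{i}/n_{i}$ a single rational $p/q$ (in lowest terms), whence $q\,m_{i}=p\,n_{i}$ for all $i$ and $V^{q}\cong\bigoplus_{i}S_{i}^{\,qm_{i}}=\bigoplus_{i}S_{i}^{\,pn_{i}}=A^{p}$ is $A$-free; thus $l=q$ works.

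The step I expect to be the main obstacle is precisely that key point --- propagating the normalised block rank along the arrows. The mechanism is to push the comatrix data through the coaction $\rho_{V}$: the components of $\rho_{V}$ relative to $V=\bigoplus_{i}Ve_{i}$, together with the antipode, produce for each arrow $j\to i$ an injective comparison between suitable tensor combinations of $Ve_{i}$ and $Ve_{j}$, giving an inequality between $m_{i}/n_{i}$ and $m_{j}/n_{j}$ in one fixed direction, from which equality follows by running around cycles. Two technical points need care. First, $H$ is not assumed finite-dimensional and its antipode need not be bijective; this is handled by replacing $H$ by (the dual algebra of) a finite-dimensional subcoalgebra carrying the coactions of $A$ and of $V$, over which the relevant maps become invertible. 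Second, when $k$ is not algebraically closed the division algebras $D_{i}$ intervene, which is why the argument is carried out through the $k$-dimensions $\dim_{k}(Ve_{i})$ rather than through raw multiplicities. Intuitively, $H$-simplicity is a transitivity condition, under which every ``equivariant vector bundle'' --- and a relative Hopf module over $A$ is such an object --- has constant normalised rank; carrying this out for a noncommutative $A$ is the content of~\cite[Theorem 3.5]{skryabin2007projectivity}.
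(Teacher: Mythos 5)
A preliminary remark: the paper does not prove this lemma --- it imports it from Skryabin --- so there is no in-paper argument to compare against and your proposal must stand on its own. The scaffolding you build is correct, and I checked it: the identities for $f_{ij}=(e_i\otimes 1)\rho(e_j)$ all hold; the component of $\rho(A_j)$ in $A_i\otimes H$ equals $\rho(A)f_{ij}$, so a sum of blocks $\bigoplus_{l\in Y}A_l$ is $H$-costable iff $Y$ is closed under the relation $j\to i$, and since every vertex carries a loop ($f_{ii}\neq 0$), $H$-simplicity is exactly strong connectivity of this digraph; likewise the bookkeeping showing that $V^l$ is $A$-free for some $l$ iff the normalised ranks $m_i/n_i=\dim_k(Ve_i)/\dim_k(Ae_i)$ all coincide, and that a common value $p/q$ yields $V^q\cong A^p$, is correct. (The opening claim that projectivity of $V$ ``gives the first half of the conclusion'' is a red herring: over semisimple $A$ projectivity is automatic and is not part of the assertion.)

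The gap is exactly where you place it, and it is not a technicality but the entire content of the theorem. The ``key point'' --- that $m_i/n_i=m_j/n_j$, or at least a one-directional inequality, whenever $f_{ij}\neq 0$ --- is asserted, not proved. The mechanism you sketch (``the components of $\rho_V$ \ldots\ produce \ldots\ an injective comparison between suitable tensor combinations of $Ve_i$ and $Ve_j$'') names no map, no tensor combination, no direction for the inequality, and no reason for injectivity. Injectivity is the crux: the natural candidate $Ve_j\to Ve_i\otimes C$, $v\mapsto v_{(0)}e_i\otimes v_{(1)}$ (with $C$ a finite-dimensional subcoalgebra carrying the coactions), need not be injective for a single arrow, and converting the comatrix relations $\Delta(h_{ij})=\sum_l h_{il}\otimes h_{lj}$ and $\sum_l h_{il}S(h_{lj})=\delta_{ij}1$ into an actual inequality between $\dim_k(Ve_i)$ and $\dim_k(Ve_j)$ is precisely where Skryabin needs the full Hopf-module compatibility together with the reduction to a finite-dimensional split situation that you only allude to. Until that comparison is constructed, the strong-connectivity step has nothing to propagate, so what you have is a correct and useful reduction of the theorem to its essential local statement, not a proof of it.
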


A quasi-triangular Hopf algebra is a pair $\left(  H,R\right)  $, where $H$ is
a Hopf algebra and $R=R^{1}\otimes R^{2}\in H\otimes H$ is an invertible
element such that
\begin{align*}
&  R^{1}h_{(1)}\otimes R^{2}h_{(2)}=h_{(2)}R^{1}\otimes h_{(1)}R^{2}%
,~\forall~h\in H,\\
&  (\Delta\otimes id_{H})(R)={R_{1}}^{1}\otimes{R_{2}}^{1}\otimes{R_{1}}%
^{2}{R_{2}}^{2},\\
&  (id_{H}\otimes\Delta)(R)={R_{1}}^{1}{R_{2}}^{1}\otimes{R_{2}}^{2}%
\otimes{R_{1}}^{2},
\end{align*}
where and hereafter $R_{i}=R={R_{i}}^{1}\otimes{R_{i}}^{2}=\ (i=1,2,\ldots) $.

Over a quasi-triangular Hopf algebra $\left(  H,R\right)  $, a left $H$-module
algebra $A$ is called quantum commutative if $ab=\left(  R^{2}\cdot b\right)
\left(  R^{1}\cdot a\right)  $, for all $a,b\in A$.

Let $\left(  H,R\right)  $ be a quasi-triangular Hopf algebra. In
\cite{Majid1991Braided}, Majid introduced the transmuted braided group $H_{R}$
of $H$, which is a Hopf algebra in the braided tensor category $_{H}%
\mathcal{M}$. Specifically, $H_{R}$ is the algebra $H$ with the left adjoint
action $\cdot_{ad}$, the coproduct $\Delta_{R}$, and the antipode $S_{R}$
given respectively by
\begin{align*}
h\cdot_{ad}h^{\prime} &  =h_{\left(  1\right)  }h^{\prime}S\left(  h_{\left(
2\right)  }\right)  ,\ \Delta_{R}(h)=h_{(1)}S\left(  R^{2}\right)  \otimes
R^{1}\cdot_{ad}h_{(2)},\\
S_{R}\left(  h\right)   &  =R^{2}S\left(  R^{1}\cdot_{ad}h\right)  \text{,
where }h,\ h^{\prime}\in H\text{. }%
\end{align*}
To avoid confusion, we write $\Delta_{R}(h)=h^{(1)}\otimes h^{(2)}$.

It is known from~\cite{Zhu2015Braided} that there is a category isomorphism
$_{H}^{H}\mathcal{YD}\cong{}_{H}^{H_{R}}\mathcal{\mathcal{M}}$. Each $V\in
{}_{H}^{H}\mathcal{YD}$ is a left $H_{R}$-comodule via $\rho_{R}:V\rightarrow
H_{R}\otimes V$ given by
\begin{equation}
\rho_{R}\left(  v\right)  =v_{\left\langle -1\right\rangle }S\left(
R^{2}\right)  \otimes R^{1}v_{\left\langle 0\right\rangle },\text{ for all
}v\in V,\label{eq_rho_R}%
\end{equation}
and $\left(  V,\rho_{R}\right)  $ generates a subcoalgebra $D_{V}$ of $H_{R}$,
which is also an $H$-submodule of $H_{R}$.

Now assume further that $H$ is semisimple, then $H$ is cosemisimple and
$S^{2}=id_{H} $ (\cite{Larson1988Semisimple}). In this case, $H_{R}$ is
cosemisimple, and thus the category $_{H}^{H_{R}}\mathcal{M}$ is semisimple.

\begin{lemma}
[{\cite[Proposition 3.5]{LiuZhu2019On}}]\label{lem_decom_of_H}Let $\left(
H,R\right)  $ be a semisimple quasi-triangular Hopf algebra over $k$. Then
each Yetter-Drinfeld submodule of $\left(  H,\cdot_{ad},\Delta\right)  \in
{}_{H}^{H}\mathcal{YD}$ is a subcoalgebra of $H_{R}$, and $H$ admits a unique
decomposition into the direct sum of simple Yetter-Drinfeld submodules, while
this decomposition coincides with the direct sum $H=D_{1}\oplus\cdots\oplus
D_{r}$ of the minimal $H$-module subcoalgebras of $H_{R}$. In addition,
$_{H}^{H_{R}}\mathcal{\mathcal{M=}}{}_{H}^{D_{1}}\mathcal{\mathcal{M}}%
\oplus\cdots\oplus{}_{H}^{D_{r}}\mathcal{\mathcal{M}}$ is a direct sum of
indecomposable module categories over the tensor category {}$_{H}\mathcal{M}$.
\end{lemma}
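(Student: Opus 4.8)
My plan is to transport the statement into the category ${}_{H}^{H_{R}}\mathcal{M}$ and exploit cosemisimplicity. First I would recall that $(H,\cdot_{ad},\Delta)$ is a Yetter--Drinfeld module (a direct check of the compatibility condition) and that, under the isomorphism ${}_{H}^{H}\mathcal{YD}\cong{}_{H}^{H_{R}}\mathcal{M}$, it corresponds to $H_{R}$ regarded as a left comodule over itself via $\Delta_{R}$: substituting the regular coaction $v_{\langle -1\rangle}\otimes v_{\langle 0\rangle}=h_{(1)}\otimes h_{(2)}$ into (\ref{eq_rho_R}) gives $\rho_{R}(h)=h_{(1)}S(R^{2})\otimes R^{1}\cdot_{ad}h_{(2)}=\Delta_{R}(h)$. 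Since coassociativity and the counit axiom do not involve the braiding, $H_{R}$ is an ordinary coassociative coalgebra; it is cosemisimple because $H$ is semisimple, so ${}_{H}^{H_{R}}\mathcal{M}$ is semisimple and $(H_{R},\cdot_{ad},\Delta_{R})$ is a finite direct sum $V_{1}\oplus\cdots\oplus V_{r}$ of simple Yetter--Drinfeld submodules. Moreover $H_{R}$ is a coalgebra object in ${}_{H}\mathcal{M}$, which gives the identity $\Delta_{R}(h\cdot_{ad}x)=(h_{(1)}\cdot_{ad}x^{(1)})\otimes(h_{(2)}\cdot_{ad}x^{(2)})$; hence $H\cdot_{ad}C$ is again a finite-dimensional $H$-stable subcoalgebra whenever $C$ is a subcoalgebra, so the minimal $H$-module subcoalgebras of $H_{R}$ are well defined and any two of them meet in a smaller such, hence in $0$. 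Everything now reduces to the claim that \emph{every simple Yetter--Drinfeld submodule $V$ of $H_{R}$ is a subcoalgebra of $H_{R}$}.

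Granting this, the remaining assertions are formal. Each $V_{i}$ is then an $H$-module subcoalgebra, and it is minimal: a proper nonzero $H$-module subcoalgebra $D\subsetneq V_{i}$ is a left coideal, hence (under $\Delta_{R}=\rho_{R}$) a subcomodule, hence a proper nonzero Yetter--Drinfeld submodule of $V_{i}$, contradicting simplicity. Writing $V_{i}=\bigoplus_{l\in T_{i}}C_{l}$ as a sum of simple subcoalgebras, the sets $T_{i}$ partition the family of all simple subcoalgebras of $H_{R}$ (since $\bigoplus_{i}V_{i}=H_{R}=\bigoplus_{l}C_{l}$), so the $V_{i}$ have pairwise disjoint coefficient coalgebras and are pairwise non-isomorphic already as $H_{R}$-comodules; hence $V_{1}\oplus\cdots\oplus V_{r}$ is multiplicity free, every Yetter--Drinfeld submodule is a direct sum of a subfamily of the $V_{i}$ (in particular a subcoalgebra), the decomposition into simple Yetter--Drinfeld submodules is unique, and a minimal $H$-module subcoalgebra --- being a Yetter--Drinfeld submodule, hence a direct sum of a subfamily of the $V_{i}$ --- must equal a single $V_{i}$; so $\{V_{1},\dots,V_{r}\}$ is exactly the set of minimal $H$-module subcoalgebras, which we rename $D_{1},\dots,D_{r}$. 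For the last assertion, the coalgebra decomposition $H_{R}=D_{1}\oplus\cdots\oplus D_{r}$ in ${}_{H}\mathcal{M}$ yields ${}_{H}^{H_{R}}\mathcal{M}={}_{H}^{D_{1}}\mathcal{M}\oplus\cdots\oplus{}_{H}^{D_{r}}\mathcal{M}$, because any left $H_{R}$-comodule splits along the orthogonal, $H$-linear idempotents $\varepsilon|_{D_{i}}\in H_{R}^{\ast}$ compatibly with the $H$-action; each ${}_{H}^{D_{i}}\mathcal{M}$ is a module category over ${}_{H}\mathcal{M}$ since $D_{i}$ is a coalgebra object there, and it is indecomposable, for a splitting of it as a module category would split its distinguished object $D_{i}$ as a $D_{i}$-comodule object, i.e.\ as an $H$-module subcoalgebra, against minimality.

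The main obstacle is the claim itself, and I would attack it as follows. By semisimplicity of ${}_{H}^{H_{R}}\mathcal{M}$, a simple Yetter--Drinfeld submodule $V\subseteq H_{R}$ is a direct summand, and the projection onto it may be chosen $H$-linear; every comodule endomorphism of the coalgebra $H_{R}$ has the form $p_{f}(h)=h^{(1)}f(h^{(2)})$ for a unique $f$ in the convolution algebra $H_{R}^{\ast}$, and $p_{f}$ is $H$-linear precisely when $f$ is invariant for the coadjoint action, i.e.\ $f(h_{(1)}xS(h_{(2)}))=\varepsilon(h)f(x)$. Thus $V=\operatorname{Im}p_{f}$ for an idempotent $f\in H_{R}^{\ast}$ that is coadjoint-invariant. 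The heart of the matter is to show that such an $f$ is central in $H_{R}^{\ast}$: one computes $f\ast g$ and $g\ast f$ against an arbitrary $g\in H_{R}^{\ast}$ using the quasi-triangular axioms together with $S^{2}=id_{H}$ (both available since $H$ is semisimple) --- it is exactly here that quasi-triangularity is genuinely used. Once $f$ is central, $\operatorname{Im}p_{f}$ is a direct-summand subcoalgebra of $H_{R}$ (indeed one then checks directly that $\Delta_{R}\circ p_{f}=(p_{f}\otimes\mathrm{id})\circ\Delta_{R}$ as well), which is the claim; the remainder of the argument is routine bookkeeping with cosemisimple coalgebras.
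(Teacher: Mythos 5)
Your main line of attack is sound and is essentially the standard one: reduce to showing that the ($H$-linear and colinear) idempotent cutting out a simple Yetter--Drinfeld summand of $H_{R}$ corresponds to a coadjoint-invariant idempotent $f\in H_{R}^{\ast}$, and that such an $f$ is central in the convolution algebra $\left(  H_{R}^{\ast},\ast_{R}\right)  $. Since $S^{2}=id_{H}$, coadjoint invariance of $f$ is exactly cocommutativity, and the centrality computation you defer is precisely the one carried out in Section 4 of this paper (the two expressions for $f\ast_{R}f^{\prime}$ obtained from the two forms of $\Delta_{R}$, applied there to the idempotents $F_{i}\in C\left(  H^{\ast}\right)  $); so the ``heart of the matter'' is correct and recoverable. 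Your bookkeeping with coefficient coalgebras to get multiplicity-freeness, uniqueness of the decomposition, and the identification of the simple summands with the minimal $H$-module subcoalgebras is also fine.

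The one genuine gap is in the last step, the indecomposability of ${}_{H}^{D_{i}}\mathcal{M}$ as a module category. You argue that a splitting would split $D_{i}$ ``as an $H$-module subcoalgebra, against minimality.'' But a direct summand of $D_{i}$ inside ${}_{H}^{D_{i}}\mathcal{M}$ is only an $H$-stable left coideal of $D_{i}$, not a subcoalgebra, so minimality among $H$-module subcoalgebras is not what would be contradicted. (That particular point happens to be harmless: $D_{i}=V_{i}$ is simple in ${}_{H}^{H_{R}}\mathcal{M}$, hence in the full subcategory ${}_{H}^{D_{i}}\mathcal{M}$, so it cannot split.) The real issue is that non-splitting of the single object $D_{i}$ does not rule out a decomposition $\mathcal{M}_{1}\oplus\mathcal{M}_{2}$ in which $D_{i}$ lies entirely in $\mathcal{M}_{1}$ while $\mathcal{M}_{2}\neq0$. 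To close this you must show that $D_{i}$ generates ${}_{H}^{D_{i}}\mathcal{M}$ as a module category over ${}_{H}\mathcal{M}$: for instance, use the adjunction $\operatorname{Hom}_{{}_{H}^{D_{i}}\mathcal{M}}\left(  H\otimes W,M\right)  \cong\operatorname{Hom}^{D_{i}}\left(  W,M\right)  $, with $H\otimes W$ as in (\ref{eq:HotW1})--(\ref{eq:HotW2}), together with the observation that the coefficient coalgebra of any nonzero object of ${}_{H}^{D_{i}}\mathcal{M}$ is an $H$-module subcoalgebra of $D_{i}$ and hence equals $D_{i}$ by minimality; then every simple object receives a nonzero morphism from $H\otimes D_{i}$, which lies in the module subcategory generated by $D_{i}$.
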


Let $W$ be a left $H_{R}$-comodule. We use $\rho\left(  w\right)
=w^{\left\langle -1\right\rangle }\otimes w^{\left\langle 0\right\rangle }$ to
denote the left $H_{R}$-coaction on $W$. Then $H\otimes W$ is a natural object
in ${}_{H}^{H_{R}}\mathcal{M}$ with the $H$-action and $H_{R}$-coaction given
by
\begin{align}
h^{\prime}\left(  h\otimes w\right)   & =h^{\prime}h\otimes w,\label{eq:HotW1}%
\\
\rho\left(  h\otimes w\right)   & =h_{\left(  1\right)  }\cdot_{ad}%
w^{\left\langle -1\right\rangle }\otimes h_{\left(  2\right)  }\otimes
w^{\left\langle 0\right\rangle },\label{eq:HotW2}%
\end{align}
where $h,h^{\prime}\in H$ and $w\in W$. The object $H\otimes W$ was used
in~\cite{LiuZhu2019On} to characterize the structure of irreducible
Yetter-Drinfeld modules over $H$. Moreover, $H\otimes W$ has also a natural
right $H$-comodule structure
\[
\rho^{\prime}\left(  h\otimes w\right)  =\left(  h_{(1)}\otimes w\right)
\otimes h_{(2)},
\]
which makes $H\otimes W\in{}_{H}^{H_{R}}\mathcal{M}^{H}$.

The $R$-adjoint-stable algebra of $W$ is defined to be the cotensor product
$N_{W}=W^{\ast}\square_{H_{R}}\left(  H\otimes W\right)  $, with
multiplication
\[
x\circ y=\sum_{l=1}^{n}\sum_{j=1}^{m}v_{l}^{\ast}\otimes g_{l}h_{j}%
\otimes\left\langle w_{j}^{\ast},v_{l}\right\rangle w_{j},
\]
where $x=\sum_{j=1}^{m}w_{j}^{\ast}\otimes h_{j}\otimes w_{j}$, $y=\sum
_{l=1}^{n}v_{l}^{\ast}\otimes g_{l}\otimes v_{l}$ are elements in $N_{W}$. If
$D=D_{1}\oplus D_{2}\oplus\cdots\oplus D_{r}$ is a direct sum of $H$-module
subcoalgebras, then $W$ can be written as $W=W_{1}\oplus W_{2}\oplus
\cdots\oplus W_{r}$, where $\rho\left(  W_{i}\right)  \subseteq D_{i}\otimes
W_{i}$. It is evident that
\[
N_{W}=W^{\ast}\square_{H_{R}}\left(  H\otimes W\right)  \cong\oplus_{i=1}%
^{r}W_{i}^{\ast}\square_{H_{R}}\left(  H\otimes W_{i}\right)  =\oplus
_{i=1}^{r}N_{W_{i}},\text{ }%
\]
as algebras.

Naturally, $H\otimes W$ is a left $N_{W}$-module via
\[
\left(  \sum_{j}w_{j}^{\ast}\otimes h_{j}\otimes w_{j}\right)  \cdot\left(
h\otimes w\right)  =\sum_{j}hh_{j}\otimes w_{j}\left\langle w_{j}^{\ast
},w\right\rangle .
\]
In addition, let $D=D_{H\otimes W}$, for any right $N_{W}$-module $U$,
$U\otimes_{N_{W}}\left(  H\otimes W\right)  \in{}_{H}^{D}\mathcal{M}$ with the
$H$-module and $D$-comodule structures induced by that on $H\otimes W$. Let
$V\in{}_{H}^{H_{R}}\mathcal{M}$, $W^{\ast}\square_{D}V$ is a right $N_{W}%
$-module via
\begin{equation}
\left(  \sum_{i}w_{i}^{\prime\ast}\otimes v_{i}\right)  \cdot\left(  \sum
_{j}w_{j}^{\ast}\otimes h_{j}\otimes w_{j}\right)  =\sum_{i}\sum_{j}%
w_{j}^{\ast}\otimes h_{j}v_{i}\left\langle w_{i}^{\prime\ast},w_{j}%
\right\rangle ,\label{W*_cpd_V_as_N_W_module}%
\end{equation}
for $\sum_{i}w_{i}^{\prime\ast}\otimes v_{i}\in W^{\ast}\square_{D}V$,
$\sum_{j}w_{j}^{\ast}\otimes h_{j}\otimes w_{j}\in N_{W}$.

\begin{lemma}
[cf. \cite{LiuZhu2019On}]\label{lem_stru_thm}Let $W$ be a nonzero left $H_{R}%
$-comodule and $D=D_{H\otimes W}$. Then

\begin{enumerate}
\item the functors $W^{\ast}\square_{D} \bullet:{}_{H}^{D}\mathcal{M}%
\rightarrow\mathcal{M}_{N_{W}}$ and $\bullet\otimes_{N_{W}}\left(  H\otimes
W\right)  :\mathcal{M}_{N_{W}}\rightarrow{}_{H}^{D}\mathcal{M}$ induce a
category equivalence. Especially, for any $V\in{}_{H}^{D}\mathcal{M}$,
\[
V\cong\left(  W^{\ast}\square_{D}V\right)  \otimes_{N_{W}}\left(  H\otimes
W\right)  \text{.}%
\]

\item If $D$ is a minimal $H$-module subcoalgebra of $H_{R}$, then $\dim
N_{W}\dim D=\dim H\left(  \dim W\right) ^{2}$.
\end{enumerate}
\end{lemma}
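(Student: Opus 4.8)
The statement has two parts. For part (1), the plan is to verify that the two functors $W^{\ast}\square_D\bullet$ and $\bullet\otimes_{N_W}(H\otimes W)$ are well-defined, and then exhibit the natural isomorphisms that witness the equivalence. First I would check that $H\otimes W$, viewed as an object of ${}_H^D\mathcal{M}^{N_W}$ (a left $D$-comodule and right $N_W$-module compatibly, with commuting left $H$-action), is actually a $(D,N_W)$-bicomodule/bimodule of the right shape so that the cotensor and the tensor both land in the claimed categories. The key computation is the unit/counit of the adjunction: for $V\in{}_H^D\mathcal{M}$ the candidate isomorphism is the evaluation-type map $(W^{\ast}\square_D V)\otimes_{N_W}(H\otimes W)\to V$, $(\sum_i w_i^{\ast}\otimes v_i)\otimes(h\otimes w)\mapsto \sum_i \langle w_i^{\ast},w\rangle\, h\cdot v_i$, and one checks it is well-defined over $\otimes_{N_W}$ (this uses exactly the module structure \eqref{W*_cpd_V_as_N_W_module} and the $N_W$-action on $H\otimes W$), $H$-linear and $D$-colinear. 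Surjectivity is immediate from the counit of the $H_R$-coaction on $W$ together with cosemisimplicity; injectivity (equivalently, that the other composite $U\to W^{\ast}\square_D(U\otimes_{N_W}(H\otimes W))$ is iso for $U\in\mathcal{M}_{N_W}$) is where I expect the real work, and I would run it through Lemma~\ref{lemma_skryabin}: since $H$ is semisimple and $H_R$ cosemisimple, $H\otimes W$ is (after taking a suitable power) a free module, which forces the cotensor/tensor composites to be faithful and hence the adjunction to be an equivalence. The structural input here is essentially the same Skryabin-type projectivity argument that underlies \cite[Theorem 3.5]{skryabin2007projectivity}, specialized to the comodule algebra $N_W$ (or rather to the data $H\otimes W$ regarded over $N_W$).

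**Part (2): the dimension count.** Assuming part (1), $\mathcal{M}_{N_W}\simeq{}_H^D\mathcal{M}$ as abelian categories, and when $D$ is a minimal $H$-module subcoalgebra of $H_R$, Lemma~\ref{lem_decom_of_H} tells us ${}_H^D\mathcal{M}$ is an indecomposable module category over the fusion category ${}_H\mathcal{M}$; in particular $N_W$ is a semisimple algebra with, say, matrix blocks $M_{n_1}(k)\oplus\cdots\oplus M_{n_s}(k)$, and the simple objects of ${}_H^D\mathcal{M}$ are exactly the images of the simple right $N_W$-modules of dimensions $n_1,\dots,n_s$. The plan is to compare two computations of $\dim(H\otimes W)=\dim H\cdot\dim W$. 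On one side, the equivalence sends the regular module $N_W$ to $H\otimes W$ itself (up to the identification $N_W\otimes_{N_W}(H\otimes W)\cong H\otimes W$), and decomposing $N_W\cong\bigoplus_i (U_i)^{n_i}$ over itself gives $H\otimes W\cong\bigoplus_i (V_i)^{n_i}$ in ${}_H^D\mathcal{M}$ where $V_i=U_i\otimes_{N_W}(H\otimes W)$. On the other side one evaluates dimensions inside ${}_H^D\mathcal{M}$: because $D$ is minimal, $\dim V_i=\tfrac{\dim D}{\dim W}\,n_i$ — this is the crucial geometric fact, that every simple object of the indecomposable module category ${}_H^D\mathcal{M}$ has dimension proportional to its "$N_W$-multiplicity" with the universal constant $\dim D/\dim W$. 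Granting that, $\dim H\cdot\dim W=\sum_i n_i\dim V_i=\tfrac{\dim D}{\dim W}\sum_i n_i^2=\tfrac{\dim D}{\dim W}\dim N_W$, which rearranges to $\dim N_W\,\dim D=\dim H\,(\dim W)^2$.

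**Where the obstacle is.** The routine parts are the well-definedness and (co)linearity checks for the unit/counit maps — tedious Sweedler bookkeeping, nothing conceptual. The genuine content is twofold: first, establishing the equivalence in part (1) rather than just an adjunction, for which the right move is to invoke the Skryabin freeness lemma (Lemma~\ref{lemma_skryabin}) to get faithful flatness of $H\otimes W$ over $N_W$ after passing to a power, and then use a Morita-type argument (or the standard "descent along a faithfully flat progenerator" pattern) to conclude both composites are naturally isomorphic to the identity; second, in part (2), pinning down that the proportionality constant relating $\dim V_i$ to $n_i$ is exactly $\dim D/\dim W$. For the latter I would argue: the functor $F=\bullet\otimes_{N_W}(H\otimes W)$ is ${}_H\mathcal{M}$-linear, so $\dim F(U)$ for simple $U$ depends only on $\dim U$ via a single rational constant $c$ (indecomposability of the module category), and one computes $c$ by testing on the regular module — $\dim F(N_W)=\dim(H\otimes W)=\dim H\dim W$ while $\dim N_W=\sum n_i^2$, together with the separate identity $\sum n_i^2\cdot(\text{something})$ — the cleanest route being to also use that $W^{\ast}\square_D(\bullet)$ applied to $D$ itself recovers a copy of $N_W$ up to the scalar $\dim D/\dim H$, forcing $c=\dim D/\dim W$. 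I would make sure the minimality of $D$ is used precisely at the point where I claim the module category is indecomposable (so the constant is well-defined), citing Lemma~\ref{lem_decom_of_H}.
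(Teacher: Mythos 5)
First, note that the paper does not actually prove this lemma: it is recalled from \cite{LiuZhu2019On} (hence the ``cf.'' in the statement), so there is no in-text proof to compare your proposal against. Judged on its own terms, your architecture for part (1) --- treat $W^{\ast}\square_{D}\bullet$ as $\operatorname{Hom}_{H}^{D}(H\otimes W,\bullet)$ in disguise, exhibit the evaluation map as the counit of a Morita-type adjunction, and exploit semisimplicity --- is the standard and correct route. One understatement, though: surjectivity of the evaluation $(W^{\ast}\square_{D}V)\otimes_{N_{W}}(H\otimes W)\to V$ is not ``immediate from the counit''; it is exactly the assertion that $H\otimes W$ is a generator of ${}_{H}^{D}\mathcal{M}$, and this is where the hypothesis $D=D_{H\otimes W}$ and the indecomposability of the blocks in Lemma~\ref{lem_decom_of_H} must enter (in an indecomposable semisimple module category over ${}_{H}\mathcal{M}$ the internal Hom between nonzero objects is nonzero, so $H\otimes W$ meets every simple). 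Without that, your argument only proves the map is an isomorphism for $V$ a summand of copies of $H\otimes W$.

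The genuine gap is in part (2), in pinning down the proportionality constant. Your Perron--Frobenius step is fine: since ${}_{H}^{D}\mathcal{M}$ is indecomposable as a module category (minimality of $D$ plus Lemma~\ref{lem_decom_of_H}), any two positive additive functions on its Grothendieck group that scale by $\dim X$ under the ${}_{H}\mathcal{M}$-action are proportional, so $\dim\bigl(U\otimes_{N_{W}}(H\otimes W)\bigr)=c\,\dim U$ for a single constant $c$, and testing on $U=N_{W}$ gives $c=\dim H\dim W/\dim N_{W}$. But your identification $c=\dim D/\dim W$ is precisely the statement to be proved, and the independent computation you offer for it (``$W^{\ast}\square_{D}(\bullet)$ applied to $D$ recovers a copy of $N_{W}$ up to the scalar $\dim D/\dim H$'') is not a correct statement: by the counit isomorphism $M\square_{C}C\cong M$ one has $W^{\ast}\square_{D}D\cong W^{\ast}$ exactly, and $W^{\ast}$ is not a rescaled $N_{W}$. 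The repair is easy and makes the proof shorter than what you wrote: $D$ itself, with $\cdot_{ad}$ and $\Delta_{R}$, is an object of ${}_{H}^{D}\mathcal{M}$ whose image $W^{\ast}\square_{D}D\cong W^{\ast}$ has dimension $\dim W$; applying the proportionality to this object gives $\dim D=c\,\dim W$, and combining with $c=\dim H\dim W/\dim N_{W}$ yields $\dim N_{W}\dim D=\dim H(\dim W)^{2}$. This route also bypasses the decomposition of $N_{W}$ into matrix blocks, so no algebraic closedness of $k$ is needed.
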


Let $\left(  H,R\right)  $ be a semisimple quasi-triangular Hopf algebra over
$k$, $W$ be a finite dimensional nonzero left $H_{R}$-comodule, and
$D=D_{H\otimes W}$. If $D$ is a minimal $H$-module subcoalgebra of $H_{R}$,
then the algebras $N_{W}$ and $N_{D}$ are closely connected.

\begin{proposition}
\label{prop-matrix-equivalent} Let $D$ be a minimal $H$-module subcoalgebra of
$H_{R}$, and $W$ be a finite dimensional nonzero left $D$-comodule. There
exists a pair of positive integers $r$ and $s$, such that as algebras
\[
N_{D}\otimes M_{s}\left(  k\right)  \cong N_{W}\otimes M_{r}\left(  k\right)
.
\]

\end{proposition}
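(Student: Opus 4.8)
The plan is to realize both $N_{W}$ and $N_{D}$ as endomorphism algebras of progenerators of the semisimple category ${}_{H}^{D}\mathcal{M}$, and then to invoke Lemma~\ref{lemma_skryabin} to force those two progenerators to become isomorphic after a suitable amplification. First I would check that Lemma~\ref{lem_stru_thm}(1) applies with the given $D$: since $W\neq 0$ and $D$ is stable under $\cdot_{ad}$, the subcoalgebra of $H_{R}$ generated by $H\otimes W$ (via the coaction~\eqref{eq:HotW2}) is a nonzero $H$-module subcoalgebra of $D$, hence equals $D$ by minimality, and the same holds for $D$ itself regarded as a left $D$-comodule via $\Delta_{R}$. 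Thus Lemma~\ref{lem_stru_thm}(1) yields category equivalences ${}_{H}^{D}\mathcal{M}\simeq\mathcal{M}_{N_{W}}$ and ${}_{H}^{D}\mathcal{M}\simeq\mathcal{M}_{N_{D}}$ sending the regular modules $N_{W}$ and $N_{D}$ to $H\otimes W$ and $H\otimes D$, so that $N_{W}\cong\operatorname{End}_{{}_{H}^{D}\mathcal{M}}(H\otimes W)$ and $N_{D}\cong\operatorname{End}_{{}_{H}^{D}\mathcal{M}}(H\otimes D)$ as algebras. Since $H$ is semisimple and $D$ cosemisimple, ${}_{H}^{D}\mathcal{M}$ is semisimple, $H\otimes W$ and $H\otimes D$ are progenerators, and $M_{n}\bigl(\operatorname{End}(X)\bigr)\cong\operatorname{End}\bigl(X^{\oplus n}\bigr)$; it therefore suffices to produce positive integers $r,s$ with $(H\otimes W)^{\oplus r}\cong(H\otimes D)^{\oplus s}$ in ${}_{H}^{D}\mathcal{M}$.

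To obtain such an isomorphism I would use the extra right $H$-comodule structure. Both $H\otimes W$ and $H\otimes D$ lie in ${}_{H}^{H_{R}}\mathcal{M}^{H}$ via $\rho'(h\otimes w)=h_{(1)}\otimes w\otimes h_{(2)}$, and this coaction descends to the cotensor products defining $N_{W}$ and $N_{D}$, making each of them a semisimple right $H$-comodule algebra (up to replacing $H$ by $H^{\mathrm{op}}$, which is immaterial since $S^{2}=\mathrm{id}_{H}$; compare $N_{D}\cong D^{\ast}\#H^{\mathrm{op}}$ in the Introduction). Moreover this comodule algebra is $H$-simple, because ${}_{H}^{D}\mathcal{M}$ is an indecomposable module category over ${}_{H}\mathcal{M}$ by Lemma~\ref{lem_decom_of_H} (a nonzero proper $H$-costable ideal of $N_{D}$ would split ${}_{H}^{D}\mathcal{M}$ as a module category; this fact is also implicit in~\cite{LiuZhu2019On}, being exactly what makes $N_{W}$ of Frobenius-type). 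Under ${}_{H}^{D}\mathcal{M}\simeq\mathcal{M}_{N_{D}}$ the object $H\otimes W$ corresponds to the nonzero right $N_{D}$-module $Q:=D^{\ast}\square_{D}(H\otimes W)$, which inherits from $H\otimes W$ a compatible right $H$-comodule structure and so is a relative Hopf module, whereas $H\otimes D$ corresponds to the free module $N_{D}$. Applying Lemma~\ref{lemma_skryabin} to $Q$ gives a positive integer $l$ with $Q^{\oplus l}$ free over $N_{D}$, say $Q^{\oplus l}\cong N_{D}^{\oplus t}$; transporting this back along the equivalence yields $(H\otimes W)^{\oplus l}\cong(H\otimes D)^{\oplus t}$ in ${}_{H}^{D}\mathcal{M}$.

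Combining the two paragraphs, $M_{l}(N_{W})\cong\operatorname{End}\bigl((H\otimes W)^{\oplus l}\bigr)\cong\operatorname{End}\bigl((H\otimes D)^{\oplus t}\bigr)\cong M_{t}(N_{D})$, that is $N_{D}\otimes M_{t}(k)\cong N_{W}\otimes M_{l}(k)$, which is the assertion with $s=t$ and $r=l$. I expect the real obstacle to be the middle paragraph: verifying carefully that $N_{D}$ (equivalently $N_{W}$) is a semisimple $H$-simple comodule algebra and that the equivalence of Lemma~\ref{lem_stru_thm} lifts to the relative Hopf modules on both sides, so that Lemma~\ref{lemma_skryabin} applies as stated. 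The remaining steps are routine manipulations with the functors of Lemma~\ref{lem_stru_thm} and the identity $M_{n}(\operatorname{End}(X))\cong\operatorname{End}(X^{\oplus n})$.
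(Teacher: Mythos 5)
Your proposal is correct and follows essentially the same route as the paper: identify $N_{W}$ and $N_{D}$ with $\operatorname{End}_{H}^{D}(H\otimes W)$ and $\operatorname{End}_{H}^{D}(H\otimes D)$, then use Lemma~\ref{lemma_skryabin} over an $H$-simple adjoint-stable comodule algebra to make the two objects isomorphic after amplification. The only (immaterial, symmetric) difference is that the paper applies Skryabin's lemma over $N_{W}$ to the module $W^{\ast}\square_{D}(H\otimes D)$, whereas you apply it over $N_{D}$ to $D^{\ast}\square_{D}(H\otimes W)$.
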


\begin{proof}
Note that $H\otimes D$ is an object of $_{H}^{D}\mathcal{M}$ with its
structure given by (\ref{eq:HotW1}) and (\ref{eq:HotW2}). By
Lemma~\ref{lem_stru_thm},
\[
H\otimes D\cong\left(  W^{\ast}\square_{D}\left(  H\otimes D\right)  \right)
\otimes_{N_{W}}\left(  H\otimes W\right)  .
\]
We view $N_{W}=W^{\ast}\square_{D}\left(  H\otimes W\right)  $ and $W^{\ast
}\square_{D}\left(  H\otimes D\right)  $ as right $H$-comodules with coactions
coming from the comultiplication on $H$. By \cite{LiuZhu2019On}, $N_{W}$ is an
$H$-simple comodule algebra. Since $W^{\ast}\square_{D}\left(  H\otimes
D\right)  \in\mathcal{M}_{N_{W}}^{H^{op}}$, by Lemma~\ref{lemma_skryabin}
there exists a pair of positive integers $r$ and $s$ such that
\[
\left(  W^{\ast}\square_{D}\left(  H\otimes D\right)  \right) ^{s}\cong\left(
N_{W}\right) ^{r}\text{ as }N_{W}\text{-modules.}%
\]
Hence,
\begin{equation}
\left(  H\otimes D\right) ^{s}\cong\left(  W^{\ast}\square_{D}\left(  H\otimes
D\right)  \right) ^{s}\otimes_{N_{W}}\left(  H\otimes W\right)  \cong\left(
H\otimes W\right) ^{r}.\label{HotD=HotW}%
\end{equation}
By~\cite{LiuZhu2019On},
\[
N_{W}\cong\operatorname{End}_{H}^{D}\left(  H\otimes W\right)  \text{ and
}N_{D}\cong\operatorname{End}_{H}^{D}\left(  H\otimes D\right)  ,
\]
thus (\ref{HotD=HotW}) implies that
\[
N_{D}\otimes M_{s}\left(  k\right)  \cong N_{W}\otimes M_{r}\left(  k\right)
.
\]

\end{proof}

When we take $W$ to be the whole $H$-module coalgebra $D$, we get a more
explicit structure on $N_{D}$.

Let $D$ be an $H$-module subcoalgebra of $H_{R}$. Then the convolution algebra
$D^{\ast}$ is a right $H$-module (left $H^{op}$-module) algebra via the right
$H$-module action given by $\left\langle d^{\ast}\leftharpoonup
\!\!\!\leftharpoonup h,d\right\rangle =\left\langle d^{\ast},h\cdot
_{ad}d\right\rangle $, for $h\in H$, $d\in D$, $d^{\ast}\in D^{\ast}$.

\begin{proposition}
\label{Prop_ND_is_WHA}Let $D$ be an $H$-module subcoalgebra of $H_{R}$. Then
the $R$-adjoint-stable algebra $N_{D}$ of $D$ is isomorphic to the smash
product $D^{\ast}\#H^{op}$.
\end{proposition}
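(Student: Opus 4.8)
The plan is to construct an explicit linear map $\Phi : D^{\ast}\#H^{op} \to N_{D}$ and verify it is an algebra isomorphism. Recall $N_{D} = D^{\ast}\square_{H_{R}}(H\otimes D)$ as a vector space, where $H\otimes D\in {}_{H}^{H_{R}}\mathcal{M}$ has the coaction $\rho(h\otimes d) = h_{(1)}\cdot_{ad}d_{(1)}\otimes h_{(2)}\otimes d_{(2)}$ from (\ref{eq:HotW2}). So an element of $N_{D}$ is a tensor $\sum d_{j}^{\ast}\otimes h_{j}\otimes d_{j}$ satisfying the cotensor (coaction-balancing) condition. I would first identify, for each $d^{\ast}\in D^{\ast}$ and $h\in H$, the element of $N_{D}$ that should correspond to $d^{\ast}\# h$; the natural candidate is obtained by applying $d^{\ast}$ to a "universal" balanced element. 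Concretely, using a comodule basis or working abstractly, set
\[
\Phi(d^{\ast}\# h) \;=\; \sum\, \bigl(d^{\ast}\leftharpoonup\!\!\!\leftharpoonup\, ?\bigr)\otimes h\,?\otimes ? ,
\]
where the question marks are filled by the coaction structure; the cleanest route is to use the isomorphism $N_{D}\cong \operatorname{End}_{H}^{D}(H\otimes D)$ recalled just above from \cite{LiuZhu2019On}, and to realize $D^{\ast}\#H^{op}$ as endomorphisms of $H\otimes D$ as well. That is, I would exhibit a left action of $D^{\ast}\#H^{op}$ on $H\otimes D$ by $H$-module, $D$-comodule endomorphisms, and then check this gives an algebra isomorphism $D^{\ast}\#H^{op}\xrightarrow{\ \sim\ }\operatorname{End}_{H}^{D}(H\otimes D)$ by a dimension count plus injectivity (or faithfulness).

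In more detail, the action I expect to work is: $(d^{\ast}\# h)\cdot(h'\otimes d) = \langle d^{\ast}, d_{(1)}\rangle\, h' h\otimes d_{(2)}$, or a variant twisted by the adjoint action and the antipode so that it lands in the comodule endomorphisms. I would first check this is a well-defined action of the algebra $D^{\ast}\#H^{op}$ — this uses that $D^{\ast}$ is a right $H$-module algebra under $\leftharpoonup\!\!\!\leftharpoonup$, which is exactly the structure highlighted before the proposition, and the smash product relation $(d^{\ast}\# h)(e^{\ast}\# g) = d^{\ast}(e^{\ast}\leftharpoonup\!\!\!\leftharpoonup h)\# hg$ (with the $H^{op}$ convention on the $H$-factor). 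Next I would verify each operator $(d^{\ast}\#h)\cdot(-)$ is left $H$-linear (immediate, since $H$ acts by left multiplication on the first tensor factor and $h$ multiplies on the right) and right $D$-colinear for the coaction (\ref{eq:HotW2}) (this is where the adjoint twist and the $d_{(1)}\otimes d_{(2)}$ split must be balanced against $h_{(1)}\cdot_{ad}d_{(1)}\otimes h_{(2)}$, and is the one genuinely computational step). Finally, to see $\Phi$ is bijective I would compare dimensions: by Lemma~\ref{lem_stru_thm}(2) with $W=D$, $\dim N_{D}\cdot\dim D = \dim H\cdot(\dim D)^{2}$, so $\dim N_{D} = \dim H\cdot\dim D = \dim(D^{\ast}\#H^{op})$; hence it suffices to prove $\Phi$ injective, which follows because the action of $D^{\ast}\#H^{op}$ on $H\otimes D$ is faithful (evaluate on $1_{H}\otimes d$ and run over a basis of $D$, using that $D^{\ast}$ separates points of $D$).

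The main obstacle I anticipate is pinning down the correct formula — getting the antipodes, the adjoint action $\cdot_{ad}$, and the $H$ versus $H^{op}$ conventions aligned so that the operators are simultaneously $H$-linear and $D$-colinear and so that composition matches the smash-product multiplication on the nose. Once the formula is right, colinearity is a short Sweedler-notation calculation using $\Delta_{R}$ and the fact that $D$ is an $H$-submodule of $H_{R}$, and the rest (algebra-map property, bijectivity) is formal. An alternative, more hands-on plan that avoids the endomorphism-algebra detour is to write down $\Phi$ directly on the cotensor product: send $d^{\ast}\# h$ to $\sum_{i} (d^{\ast}\leftharpoonup\!\!\!\leftharpoonup S(e_{i}{}_{(1)}))\otimes e_{i}{}_{(2)}\,h\otimes \cdots$ built from a basis $\{e_{i}\}$ of $D$ with the property expressing the coaction, then verify the cotensor condition and that $\circ$ on $N_{D}$ pulls back to the smash product; this is equivalent but requires carrying the basis through, so I prefer the endomorphism-algebra route. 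Either way, the dimension identity from Lemma~\ref{lem_stru_thm} is what closes the argument cleanly.
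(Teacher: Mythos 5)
Your overall strategy---realize both algebras inside $\operatorname{End}_{H}^{D}\left(  H\otimes D\right)  $ (or map directly into the cotensor product) and close with a dimension count plus injectivity---is viable in outline, but as written the proof has a genuine gap: the step you yourself flag as ``the one genuinely computational step,'' namely pinning down the formula, is never carried out, and it is not a routine omission. Your naive candidate $\left(  d^{\ast}\#h\right)  \cdot\left(  h^{\prime}\otimes d\right)  =\left\langle d^{\ast},d_{\left(  1\right)  }\right\rangle h^{\prime}h\otimes d_{\left(  2\right)  }$ fails on two counts: it is not an action of the smash product at all, since $\left(  \left(  d^{\ast}\#h\right)  \left(  e^{\ast}\#g\right)  \right)  \cdot\left(  h^{\prime}\otimes d\right)  $ produces the term $e^{\ast}\leftharpoonup\!\!\!\leftharpoonup h_{\left(  1\right)  }$ (and the convolution here is $\ast_{R}$, not the ordinary one) while the iterated action does not; and colinearity must be checked against the coaction $h_{\left(  1\right)  }\cdot_{ad}d^{\left(  1\right)  }\otimes h_{\left(  2\right)  }\otimes d^{\left(  2\right)  }$ built from the braided coproduct $\Delta_{R}$, not against $d_{\left(  1\right)  }\otimes d_{\left(  2\right)  }$. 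The paper resolves exactly this by exhibiting the explicit map $\Phi\left(  d^{\ast}\#h\right)  =d^{\ast\left\langle 0\right\rangle }\leftharpoonup\!\!\!\leftharpoonup S\left(  h_{\left(  1\right)  }\right)  \otimes h_{\left(  2\right)  }\otimes d^{\ast\left\langle 1\right\rangle }$, where $\rho\left(  d^{\ast}\right)  =d^{\ast\left\langle 0\right\rangle }\otimes d^{\ast\left\langle 1\right\rangle }$ is the right $D$-comodule structure on $D^{\ast}$ coming from the left coproduct of $D$, together with the explicit inverse $\Psi\left(  \sum_{i}d_{i}^{\ast}\otimes h_{i}\otimes d_{i}\right)  =\sum_{i}\varepsilon\left(  d_{i}\right)  d_{i}^{\ast}\leftharpoonup\!\!\!\leftharpoonup h_{i\left(  1\right)  }\#h_{i\left(  2\right)  }$; well-definedness of $\Phi$ is precisely the colinearity identity $\left(  d^{\ast}\leftharpoonup\!\!\!\leftharpoonup h\right)  ^{\left\langle 0\right\rangle }\otimes\left(  d^{\ast}\leftharpoonup\!\!\!\leftharpoonup h\right)  ^{\left\langle 1\right\rangle }=\left(  d^{\ast\left\langle 0\right\rangle }\leftharpoonup\!\!\!\leftharpoonup h_{\left(  2\right)  }\right)  \otimes S\left(  h_{\left(  1\right)  }\right)  \cdot_{ad}d^{\ast\left\langle 1\right\rangle }$ that your plan leaves unverified. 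Until some such formula is produced and checked, the proposal is a plan rather than a proof.

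A secondary issue: your closing dimension count invokes part 2) of Lemma~\ref{lem_stru_thm}, which is stated only for \emph{minimal} $H$-module subcoalgebras, whereas the proposition allows an arbitrary $H$-module subcoalgebra $D$. This can be repaired by decomposing $D$ into minimal subcoalgebras via Lemma~\ref{lem_decom_of_H} (which uses semisimplicity of $H$) and summing the dimensions, but the paper's route of exhibiting mutually inverse maps avoids any dimension count, any appeal to minimality, and any use of semisimplicity, which is one reason it is the cleaner argument here.
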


\begin{proof}
Let
\[
\Psi:N_{D}\rightarrow D^{\ast}\#H^{op},\ \Psi\left(  \sum_{i}d_{i}^{\ast
}\otimes h_{i}\otimes d_{i}\right)  =\sum_{i}\varepsilon\left(  d_{i}\right)
d_{i}^{\ast}\leftharpoonup\!\!\!\leftharpoonup h_{i\left(  1\right)
}\#h_{i\left(  2\right)  }%
\]
and
\[
\Phi:D^{\ast}\#H^{op}\longrightarrow N_{D},\ \Phi\left(  d^{\ast}\#h\right)
=d^{\ast\left\langle 0\right\rangle }\leftharpoonup\!\!\!\leftharpoonup
S\left(  h_{\left(  1\right)  }\right)  \otimes h_{\left(  2\right)  }\otimes
d^{\ast\left\langle 1\right\rangle },
\]
where $\rho\left(  d^{\ast}\right)  =d^{\ast\left\langle 0\right\rangle
}\otimes d^{\ast\left\langle 1\right\rangle }$ is the right $D$-comodule
coaction on $D^{\ast}$ arising from the left coproduct of $D$.

Since for all $h\in H,d^{\ast}\in D^{\ast}$,%
\[
\left(  d^{\ast}\leftharpoonup\!\!\!\leftharpoonup h\right) ^{\left\langle
0\right\rangle }\otimes\left(  d^{\ast}\leftharpoonup\!\!\!\leftharpoonup
h\right) ^{\left\langle 1\right\rangle }=\left(  d^{\ast\left\langle
0\right\rangle }\leftharpoonup\!\!\!\leftharpoonup h_{\left(  2\right)
}\right)  \otimes S\left(  h_{\left(  1\right)  }\right)  \cdot_{ad}%
d^{\ast\left\langle 1\right\rangle },\text{ }%
\]
$\Phi$ is well-defined. It is routine to verify that the $\Phi$ is the inverse
of $\Psi$.

For any $h,h^{\prime}\in H$, $d^{\ast},d^{\prime\ast}\in D^{\ast}$,
\begin{align*}
& \Phi\left(  \left(  d^{\ast}\#h\right)  \left(  d^{\prime\ast}\#h^{\prime
}\right)  \right) \\
& =\Phi\left(  d^{\ast}\ast_{R}\left(  d^{\prime\ast}\leftharpoonup
\!\!\!\leftharpoonup h_{\left(  1\right)  }\right)  \#h^{\prime}h_{\left(
2\right)  }\right) \\
& =\left(  d^{\ast}\ast_{R}\left(  d^{\prime\ast}\leftharpoonup
\!\!\!\leftharpoonup h_{\left(  1\right)  }\right)  \right) ^{\left\langle
0\right\rangle }\leftharpoonup\!\!\!\leftharpoonup S\left(  h_{\left(
1\right)  }^{\prime}h_{\left(  2\right)  }\right)  \otimes h_{\left(
2\right)  }^{\prime}h_{\left(  3\right)  }\otimes\left(  d^{\ast}\ast
_{R}\left(  d^{\prime\ast}\leftharpoonup\!\!\!\leftharpoonup h_{\left(
1\right)  }\right)  \right) ^{\left\langle 1\right\rangle }
\end{align*}
\begin{align*}
& =\left(  d^{\ast\left\langle 0\right\rangle }\ast_{R}\left(  d^{\prime\ast
}\leftharpoonup\!\!\!\leftharpoonup h_{\left(  1\right)  }\right)  \right)
\leftharpoonup\!\!\!\leftharpoonup S\left(  h_{\left(  1\right)  }^{\prime
}h_{\left(  2\right)  }\right)  \otimes h_{\left(  2\right)  }^{\prime
}h_{\left(  3\right)  }\otimes d^{\ast\left\langle 1\right\rangle }\\
& =\left\langle d^{\ast\left\langle 0\right\rangle }\leftharpoonup
\!\!\!\leftharpoonup S\left(  h_{\left(  1\right)  }\right)  ,d^{\prime
\ast\left\langle 1\right\rangle }\right\rangle \left(  d^{^{\prime}%
\ast\left\langle 0\right\rangle }\leftharpoonup\!\!\!\leftharpoonup S\left(
h_{\left(  1\right)  }^{\prime}\right)  \otimes h_{\left(  2\right)  }%
^{\prime}h_{\left(  2\right)  }\otimes d^{\ast\left\langle 1\right\rangle
}\right) \\
& =\left(  d^{\ast\left\langle 0\right\rangle }\leftharpoonup
\!\!\!\leftharpoonup S\left(  h_{\left(  1\right)  }\right)  \otimes
h_{\left(  2\right)  }\otimes d^{\ast\left\langle 1\right\rangle }\right)
\left(  d^{^{\prime}\ast\left\langle 0\right\rangle }\leftharpoonup
\!\!\!\leftharpoonup S\left(  h_{\left(  1\right)  }^{\prime}\right)  \otimes
h_{\left(  2\right)  }^{\prime}\otimes d^{\prime\ast\left\langle
1\right\rangle }\right) \\
& =\Phi\left(  d^{\ast}\#h\right)  \Phi\left(  d^{\prime\ast}\#h^{\prime
}\right)  .
\end{align*}
Obviously,
\[
\Phi\left(  \varepsilon_{D}\otimes1_{H}\right)  =\sum\varepsilon_{D}%
^{<0>}\otimes1_{H}\otimes\varepsilon_{D}^{<1>}=1_{N_{D}},
\]
so $\Phi$ is an algebra isomorphism.
\end{proof}

Let $H$ be a Hopf algebra with invertible antipode $S$, and $A$ be an
$H$-module algebra. Then $A^{\ast}$ is a right $H$-module via the transpose
action $\vartriangleleft$, and $A$ is a natural right $A\#H$-module via
$a\cdot(b\#h)=S^{-1}\left(  h\right)  \cdot\left(  ab\right)  $, where
$a,\ b\in A$ and $h\in H$. This action induces a left $A\#H$-module structure
on $A^{\ast}$, whence we have a natural algebra map $\theta:A\#H\rightarrow
\operatorname{End}A^{\ast}$. Explicitly speaking, $\theta\left(  a\#h\right)
\left(  b^{\ast}\right)  =a\rightharpoonup\left(  b^{\ast}\vartriangleleft
S^{-1}\left(  h\right)  \right)  $, where $a\in A$, $h\in H$, $b^{\ast}\in
A^{\ast}$.

\begin{lemma}
\label{le:ThetaMap}The map
\begin{equation}
{\Theta:A\#H\rightarrow\operatorname{End}A^{\ast}\otimes H,\ }\text{{sending}%
}{\ a\#h}\text{\ to }{\theta(a\#h_{\left(  1\right)  })\otimes h_{\left(
2\right)  },}\label{ThetaMap}%
\end{equation}
is an algebra embedding.
\end{lemma}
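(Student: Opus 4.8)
The plan is to show that $\Theta$ is an algebra map and then that it is injective. For the multiplicativity, I would first recall how the product in $A\#H$ unfolds: $(a\#h)(b\#h') = a(h_{(1)}\cdot b)\#h_{(2)}h'$. Applying $\Theta$ to this and comparing with $\Theta(a\#h)\Theta(b\#h')$ reduces, after using that $\theta$ is already known to be an algebra map $A\#H\to\operatorname{End}A^\ast$ and that $\Delta$ is an algebra map on $H$, to checking the single identity
\[
\theta\bigl(a(h_{(1)}\cdot b)\#h_{(2)}h'_{(1)}\bigr)\otimes h_{(3)}h'_{(2)}
=\theta(a\#h_{(1)})\theta(b\#h'_{(1)})\otimes h_{(2)}h'_{(2)},
\]
which in turn follows once we verify $\theta(a\#h_{(1)})\theta(b\#h_{(2)}) = \theta\bigl(a(h_{(1)}\cdot b)\#h_{(2)}\bigr)$ — but that is exactly the statement that $\theta$ is multiplicative applied to the product $(a\#h_{(1)})(1\#h_{(2)})\cdots$; more precisely it is immediate from $\theta$ being an algebra map together with $(1\#h)(b\#1)=(h_{(1)}\cdot b)\#h_{(2)}$ inside $A\#H$. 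So the multiplicativity of $\Theta$ is essentially bookkeeping with the coproduct on $H$ once $\theta$'s multiplicativity is in hand. I would also check $\Theta(1\#1)=\theta(1\#1)\otimes 1 = \operatorname{id}_{A^\ast}\otimes 1$, the unit of $\operatorname{End}A^\ast\otimes H$.

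For injectivity — which I expect to be the real content — suppose $\Theta(x)=0$ for $x=\sum_i a_i\#h_i\in A\#H$; without loss the $h_i$ may be taken linearly independent in $H$. Then $\sum_i \theta(a_i\#h_{i(1)})\otimes h_{i(2)}=0$ in $\operatorname{End}A^\ast\otimes H$. The idea is to pair the second tensor factor against an arbitrary $f\in H^\ast$: this gives $\sum_i f(h_{i(2)})\,\theta(a_i\#h_{i(1)}) = 0$ in $\operatorname{End}A^\ast$ for all $f$. Now I would use that $S$ is invertible — more specifically that the map $h\mapsto h_{(1)}\otimes h_{(2)}$ followed by applying an arbitrary functional to the second slot spans enough of $\operatorname{End}$-space; concretely, choosing $f=S^{-1}$ transposed appropriately, or simply using the antipode axiom in the form $h_{(1)}\varepsilon(h_{(2)}) = h$ together with a suitably chosen $f$, one recovers $\sum_i \theta(a_i'\#h_i)=0$ after a change of variables, and ultimately (because the $h_i$ are independent and $\varepsilon$ is a nonzero functional) one extracts $\theta(a_i\#h_i)=0$ for each $i$, hence $\sum_i\theta(a_i\#h_i)=\theta(x)=0$. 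Then injectivity of $\Theta$ reduces to injectivity of $\theta:A\#H\to\operatorname{End}A^\ast$.

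To finish I would establish that $\theta$ itself is injective, which is where the module-algebra structure is used essentially. The right $A\#H$-action on $A$ is $a\cdot(b\#h)=S^{-1}(h)\cdot(ab)$; I would argue this makes $A$ a faithful $A\#H$-module, or at least faithful enough after transposing to $A^\ast$. Suppose $\theta(\sum_i a_i\#h_i)=0$, i.e. $\sum_i a_i\rightharpoonup(b^\ast\vartriangleleft S^{-1}(h_i))=0$ for all $b^\ast\in A^\ast$; dualizing, $\sum_i S^{-1}(h_i)\cdot(a_i b)=0$ in $A$ for all $b\in A$. Taking $b=1$ gives $\sum_i S^{-1}(h_i)\cdot a_i=0$; more generally, after again assuming the $h_i$ independent and using that the $H$-action is what it is, one can iterate with various $b$ and use the nondegeneracy of the pairing $A\otimes A\to k$ coming from the fact that $A$ is a (finite-dimensional, strongly separable in the applications, but here just finite-dimensional) algebra — actually for this lemma $A$ need not be separable; one only needs that $a\mapsto(b\mapsto ab)$ is injective, which holds since $A$ is unital. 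This forces each $a_i\#h_i$ contribution to vanish, giving $\ker\theta=0$.

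The main obstacle I anticipate is the injectivity of $\theta$: the naive "evaluate at $b=1$" only kills the antipode-twisted leading terms, and one must set up the right inductive extraction — essentially using that $\{h\mapsto S^{-1}(h)\cdot(a\,\cdot)\}$ separates points because the antipode is bijective and the $h_i$ are independent — to conclude faithfulness. Once faithfulness of the $A\#H$-module $A$ (equivalently injectivity of $\theta$) is secured, both multiplicativity and injectivity of $\Theta$ follow by the coproduct manipulations sketched above.
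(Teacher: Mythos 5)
Your multiplicativity argument is fine: since $\theta$ is an algebra map and $\Delta_H$ is multiplicative and coassociative, both $\Theta\left((a\#h)(b\#h')\right)$ and $\Theta(a\#h)\Theta(b\#h')$ unfold to $\theta\left(a(h_{(1)}\cdot b)\#h_{(2)}h'_{(1)}\right)\otimes h_{(3)}h'_{(2)}$, and $\Theta(1_A\#1_H)=\operatorname{id}_{A^{\ast}}\otimes 1_H$. (The paper states this lemma without proof, so there is nothing to compare against.)

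The injectivity argument, however, contains a genuine error: you organize it as a reduction to the injectivity of $\theta:A\#H\rightarrow\operatorname{End}A^{\ast}$, and $\theta$ is \emph{not} injective in general. Take $A=k$ with the trivial $H$-action: then $A\#H=H$, $A^{\ast}=k$, and $\theta(1\#h)=\varepsilon(h)\operatorname{id}_k$, so $\ker\theta$ has codimension one --- while $\Theta$ in this case is just the identity of $H$. The entire content of the lemma is the extra tensor leg $h_{(2)}$, which your argument discards: since $(\operatorname{id}\otimes\varepsilon)\circ\Theta=\theta$, the implication $\Theta(x)=0\Rightarrow\theta(x)=0$ that you work to extract is automatic and carries no information. (The intermediate claim that linear independence of the $h_i$ separates $\sum_i\theta(a_i\#h_{i(1)})\otimes h_{i(2)}=0$ into $\theta(a_i\#h_i)=0$ term by term is also unjustified, as the legs $h_{i(2)}$ need not remain independent; but even granted, it leads nowhere.) Likewise, in your proof that $\ker\theta=0$, from $\sum_i S^{-1}(h_i)\cdot(ca_i)=0$ for all $c\in A$ one cannot conclude that individual terms vanish: the operators $S^{-1}(h_i)\cdot(-)$ may collapse everything (again, trivial action), and unitality of $A$ does not help.

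The correct route keeps the second leg. Writing $x=\sum_i a_i\#h_i$ and evaluating $\Theta(x)$ against $b^{\ast}\in A^{\ast}$ and then against $c\in A$, one finds that $\Theta(x)=0$ if and only if $\sum_i S^{-1}(h_{i(1)})\cdot(ca_i)\otimes h_{i(2)}=0$ in $A\otimes H$ for every $c\in A$. Now apply the bijection $A\otimes H\rightarrow A\otimes H$, $a\otimes h\mapsto h_{(1)}\cdot a\otimes h_{(2)}$; since $h_{(2)}S^{-1}(h_{(1)})=\varepsilon(h)1_H$, the displayed element becomes $\sum_i ca_i\otimes h_i$. Taking $c=1_A$ gives $x=0$. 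This untwisting step, in which $h_{(2)}$ cancels the $S^{-1}(h_{(1)})$-twist coming from $\theta$, is the idea missing from your proposal.
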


\section{A Weak Hopf Algebra Structure on $A\#H$}

\label{A.H_IS_WHA}

If $\left(  H,R\right)  $ is a semisimple quasi-triangular Hopf algebra and
$A$ is a semisimple quantum commutative $H$-module algebra, then Cohen and
Westreich~\cite{Cohen1994Supersymmetry} showed that the left $A\#H$-module
category $_{A\#H}\operatorname{Mod}$ is a monoidal category.

In the special case when $\left(  H,R\right)  =\left(  kG,1\otimes1\right)  $,
we can see more structural construction on $A\#H$.

\begin{example}
\label{QC_algebra_over_kG}Let $k$ be an algebraically closed field, $\left(
H,R\right)  =\left(  kG,1\otimes1\right)  $ be the group algebra of a finite
group $G$ and $A$ be a separable quantum commutative $H$-module algebra. Then
$A$ is commutative and $A=ke_{1}\oplus ke_{2}\oplus\cdots\oplus ke_{t}$, where
$X=\left\{  e_{1},e_{2},\ldots,e_{t}\right\}  $ is the complete set of minimal
orthogonal idempotents of $A$. Clearly, for any $g\in G$, $\left\{  g\cdot
e_{1},g\cdot e_{2},\ldots,g\cdot e_{t}\right\}  $ is a permutation of $X$ and
$X$ is a disjoint union of $G$-orbits.

If $A$ is $H$-simple, then $G$ acts on $X$ transitively. Let $G_{1}=\left\{
g\in G\mid g\cdot e_{1}=e_{1}\right\}  $ be the stabilizer of $e_{1}$. Let
$g_{1},\ldots,g_{t}$ be any system of left coset representatives of $G_{1}$ in
$G$, then $\left\{  g_{1}\cdot e_{1},g_{2}\cdot e_{1},\ldots,g_{t}\cdot
e_{1}\right\}  =X$. Write $E_{ij}=g_{i}\cdot e_{1}\#g_{i}g_{j}^{-1}\in A\#H$,
$1\leq i,j\leq t$. Then one can check that the set $Y=\left\{  E_{ij}\mid1\leq
i,j\leq t\right\}  $ is a set of matrix units in $A\#H$. The centralizer
$C_{A\#H}\left(  Y\right)  $ of $Y$ in $A\#H$ is
\[
C_{A\#H}\left(  Y\right)  =\operatorname{Span}\left\{  \sum_{i=1}^{t}%
g_{i}\cdot e_{1}\#g_{i}hg_{i}^{-1}\mid h\in G_{1}\right\}
\]
and for any $h,h^{\prime}\in G_{1}$,%
\[
\left(  \sum_{i=1}^{t}g_{i}\cdot e_{1}\#g_{i}hg_{i}^{-1}\right)  \left(
\sum_{j=1}^{t}g_{j}\cdot e_{1}\#g_{j}h^{\prime}g_{j}^{-1}\right)  =\sum
_{i=1}^{t}\left(  g_{i}\cdot e_{1}\right)  \#g_{i}hh^{\prime}g_{i}%
^{-1}\text{.}%
\]
Thus, $C_{A\#H}\left(  Y\right)  \cong kG_{1}$ and as an algebra $A\#H\cong
M_{t}\left(  k\right)  \otimes kG_{1}\cong\operatorname{End}A^{\ast}%
\otimes\left(  kG_{1}\right)  $, where we note that $kG_{1}$ is a Hopf
subalgebra of $H=kG$.
\end{example}

We can see in this example that under certain assumption, the smash product
$A\#H$ is related to a Hopf subalgebra of $H$. For general quasi-triangular
Hopf algebras, there is also a similar formulating strategy.

Assume that $\left(  H,R\right)  $ is a quasi-triangular Hopf algebra, and $A$
is a quantum commutative $H$-module algebra. Recall
from~\cite{Cohen1994Supersymmetry} that any left $A\#H$-module $M$ is an
$A$-$A$-bimodule with $A$-actions $a\cdot m=\left(  a\#1_{H}\right)  m$ and
$m\cdot a=\left(  \left(  R^{2}\cdot a\right)  \#R^{1}\right)  m$. If $M$ and
$N$ are left $A\#H$-modules, then $M\otimes_{A}N$ is also a left $A\#H$-module
via
\begin{equation}
\left(  a\#h\right)  \left(  m\otimes n\right)  =\left(  a\#h_{\left(
1\right)  }\right)  m\otimes\left(  1_{A}\#h_{\left(  2\right)  }\right)
n,\label{MonoidalStruOnAsmashH}%
\end{equation}
where $a\in A,\ h\in H,\ m\in M,\ n\in N$. This makes the category
$_{A\#H}\operatorname{Mod}$ of left $A\#H$-modules into a monoidal category.

In this section, we work on a semisimple quasi-triangular Hopf algebra
$\left(  H,R\right)  $ and a strongly separable quantum commutative $H$-module
algebra $A$. We present weak Hopf algebra structures on $A\#H$ and
${\operatorname{End}}\,A^{\ast}\otimes H$ such that the map $\Theta
:A\#H\rightarrow{\operatorname{End}}\,A^{\ast}\otimes H$ (in
Lemma~\ref{le:ThetaMap}) is a weak Hopf algebra embedding if the action of the
Drinfeld element $u$ on $A$ is trivial. The natural monoidal category
${}_{A\#H}\mathcal{M}$ is precisely the full monoidal subcategory of
$_{A\#H}\operatorname{Mod}$.

We start with a semisimple quasi-triangular Hopf algebra $\left(  H,R\right)
$ and a strongly separable quantum commutative left $H$-module algebra $A$,
where strongly separable means that $A$ has a symmetric separability
idempotent $x=x^{1}\otimes x^{2}$. Let $\alpha_{A}\in A^{\ast}$ be the element
determined by $\left\langle \alpha_{A},x^{1}\right\rangle x^{2}=1_{A} $. It is
routine to verify the following statements.

\begin{lemma}
\label{lem_properties_for_x&alpha} The map $\alpha_{A}$ is the trace function
of the left regular representation of $A$. For all $a\in A,a^{\ast}\in
A^{\ast},h\in H$,

\begin{enumerate}
\item $x^{1}\left\langle x^{2}\rightharpoonup\alpha_{A},a\right\rangle =a$,
and $\left\langle a^{\ast},x^{1}\right\rangle x^{2}\rightharpoonup\alpha
_{A}=a^{\ast}$, i.e. $x^{1}\otimes\left( x^{2}\rightharpoonup\alpha_{A}\right)
$ is a pair of dual basis for $A$ and $A^{*}$;

\item $h\cdot x^{1}\otimes x^{2}=x^{1}\otimes S\left(  h\right)  \cdot x^{2}$,
provided that $H$ is semisimple and involutory;

\item $\left\langle \alpha_{A},h\cdot a\right\rangle =\varepsilon_{H}\left(
h\right)  \left\langle \alpha_{A},a\right\rangle $.
\end{enumerate}
\end{lemma}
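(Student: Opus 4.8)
The plan is to deduce the preliminary assertion and all three items from the defining properties of the symmetric separability idempotent $x=x^{1}\otimes x^{2}$---namely $x^{1}x^{2}=1_{A}$, $ax^{1}\otimes x^{2}=x^{1}\otimes x^{2}a$ for all $a\in A$, and $x^{1}\otimes x^{2}=x^{2}\otimes x^{1}$---together with the module-algebra axiom and the fact that $H$ is involutory. Although the trace assertion is stated first, it is cleanest to prove item~1 first and derive everything else from it.

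I would first establish item~1. A short manipulation of $\sum_{i}x_{i}^{1}\otimes ax_{i}^{2}$, using only the flip, the symmetry relation and the relation $ax^{1}\otimes x^{2}=x^{1}\otimes x^{2}a$, yields $\sum_{i}x_{i}^{1}\otimes ax_{i}^{2}=\sum_{i}(x_{i}^{1}a)\otimes x_{i}^{2}$. Contracting the second tensor leg against $\alpha_{A}$ and using $\sum_{i}x_{i}^{1}\langle\alpha_{A},x_{i}^{2}\rangle=\sum_{i}\langle\alpha_{A},x_{i}^{1}\rangle x_{i}^{2}=1_{A}$ (symmetry together with the definition of $\alpha_{A}$) gives $x^{1}\langle x^{2}\rightharpoonup\alpha_{A},a\rangle=a$; evaluating the other asserted identity $\langle a^{\ast},x^{1}\rangle(x^{2}\rightharpoonup\alpha_{A})=a^{\ast}$ on an arbitrary $b\in A$ and invoking what was just shown proves it as well. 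Hence $\{x_{i}^{1}\}$ and $\{x_{i}^{2}\rightharpoonup\alpha_{A}\}$ are dual bases, and every element of $A^{\ast}$ has the form $d\rightharpoonup\alpha_{A}$ for some $d\in A$. The preliminary assertion now follows, since the trace of $L_{a}$ (left multiplication by $a$) computed in these bases is $\sum_{i}\langle x_{i}^{2}\rightharpoonup\alpha_{A},\,ax_{i}^{1}\rangle=\langle\alpha_{A},\,a\,x_{i}^{1}x_{i}^{2}\rangle=\langle\alpha_{A},a\rangle$; in particular $\langle\alpha_{A},ab\rangle=\langle\alpha_{A},ba\rangle$.

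For item~3, the module-algebra axiom gives $h_{(1)}\cdot\bigl(a\,(S(h_{(2)})\cdot b)\bigr)=(h\cdot a)b$, i.e. $L_{h\cdot a}=\rho(h_{(1)})\circ L_{a}\circ\rho(S(h_{(2)}))$, where $\rho$ denotes the $H$-action on $A$. Taking the trace, using its cyclicity and that $\rho$ is an algebra map, $\langle\alpha_{A},h\cdot a\rangle=\operatorname{tr}\bigl(L_{a}\circ\rho\bigl(S(h_{(2)})h_{(1)}\bigr)\bigr)$; since $S^{2}=\operatorname{id}_{H}$ one has $\sum S(h_{(2)})h_{(1)}=S\bigl(\sum S(h_{(1)})h_{(2)}\bigr)=\varepsilon(h)1_{H}$, so the right-hand side is $\varepsilon(h)\operatorname{tr}L_{a}=\varepsilon(h)\langle\alpha_{A},a\rangle$. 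Applying item~3 with $h$ replaced by $h_{(1)}$ and $a$ by $b\,(S(h_{(2)})\cdot c)$ and simplifying with the antipode axioms, I record the consequence $\langle\alpha_{A},(h\cdot b)c\rangle=\langle\alpha_{A},b\,(S(h)\cdot c)\rangle$ for all $b,c\in A$.

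Finally, item~2: both sides lie in $A\otimes A$, and such an element is determined by the maps $\operatorname{id}\otimes(d\rightharpoonup\alpha_{A})\colon A\otimes A\to A$, $d\in A$, because by item~1 these exhaust the functionals on the second factor. Applying $\operatorname{id}\otimes(d\rightharpoonup\alpha_{A})$ to $h\cdot x^{1}\otimes x^{2}$ and using the symmetry of $\alpha_{A}$ and item~1 gives $\sum_{i}(h\cdot x_{i}^{1})\langle\alpha_{A},d\,x_{i}^{2}\rangle=h\cdot d$; applying it to $x^{1}\otimes S(h)\cdot x^{2}$ and using the symmetry of $\alpha_{A}$, the consequence of item~3 just recorded, and item~1 gives $\sum_{i}x_{i}^{1}\langle\alpha_{A},(h\cdot d)\,x_{i}^{2}\rangle=h\cdot d$ as well. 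The two families of values coincide, so the two tensors are equal. The only step carrying genuine content is $\sum S(h_{(2)})h_{(1)}=\varepsilon(h)1_{H}$ in item~3, which truly needs $S^{2}=\operatorname{id}_{H}$---precisely the hypothesis flagged there, and available here since a semisimple Hopf algebra over a field of characteristic zero is involutory; everything else is formal manipulation with the separability idempotent and the module structure, and I anticipate no serious obstacle.
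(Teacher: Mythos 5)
Your proof is correct, and since the paper dismisses this lemma with ``It is routine to verify the following statements,'' there is no competing argument to compare it with; your write-up simply supplies the omitted verification. All the key steps check out: the identity $\sum_i x_i^1\otimes a x_i^2=\sum_i x_i^1 a\otimes x_i^2$ obtained from symmetry and the separability relation, the dual-basis trace computation, the reduction of item~3 to $\sum S(h_{(2)})h_{(1)}=\varepsilon(h)1_H$ (which does use $S^2=\mathrm{id}_H$, but that is available throughout since $H$ is semisimple over a field of characteristic zero), and the separation of the two tensors in item~2 by the functionals $d\rightharpoonup\alpha_A$, which by item~1 exhaust $A^{\ast}$.
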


\begin{proposition}
\label{prop_wba_stru} The smash product $A\#H$ is a weak bialgebra, with the
comultiplication and the counit map given by
\begin{equation}
\tilde{\Delta}\left(  a\#h\right)  =a\left(  R^{2}\cdot x^{1}\right)
\#R^{1}h_{\left(  1\right)  }\otimes x^{2}\#h_{\left(  2\right)  }%
,\ \tilde{\varepsilon}\left(  a\#h\right)  =\left\langle \alpha_{A}%
,a\right\rangle \varepsilon\left(  h\right)  ,\label{eq:deltaofAsmashH}%
\end{equation}
where $a\in A,\ h\in H$.
\end{proposition}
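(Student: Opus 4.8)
The plan is to verify directly that $(A\#H,\tilde\Delta,\tilde\varepsilon)$ satisfies the axioms of a weak bialgebra: $\tilde\Delta$ is a (not necessarily unital) algebra map, $\tilde\varepsilon$ is a (not necessarily unital) algebra map, coassociativity of $\tilde\Delta$, and the two weak counit axioms together with the weak comultiplicativity-of-the-unit axiom $(\tilde\Delta\otimes\mathrm{id})\tilde\Delta(1) = (\tilde\Delta(1)\otimes 1)(1\otimes\tilde\Delta(1)) = (1\otimes\tilde\Delta(1))(\tilde\Delta(1)\otimes 1)$. Coassociativity of $\tilde\Delta$ on $A\#H$ reduces, after applying the algebra-map property, to coassociativity on the two tensor factors: on the $H$-part it is the ordinary coassociativity of $\Delta$, and on the $A$-part it is the statement that $x=x^1\otimes x^2$ is "coassociative" in the sense $x^1 \otimes x^2 x^3 \otimes x^4 = x^1 y^1 \otimes y^2 \otimes x^2$ (using a second copy $y$), which is exactly the separability-idempotent identity $\sum x^1\otimes x^2 = \sum x^1\otimes x^2$ combined with $a x^1\otimes x^2 = x^1\otimes x^2 a$. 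The interaction between the two factors is controlled by the quasi-triangular axioms for $R$ (the hexagon identities $(\Delta\otimes\mathrm{id})(R)$, $(\mathrm{id}\otimes\Delta)(R)$) and by quantum commutativity $ab=(R^2\cdot b)(R^1\cdot a)$, which is precisely what is needed to move the $R^2\cdot x^1$ factor past elements of $A$.

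First I would check that $\tilde\varepsilon$ is multiplicative: since the multiplication in $A\#H$ is $(a\#h)(b\#h')=a(h_{(1)}\cdot b)\#h_{(2)}h'$, one has $\tilde\varepsilon((a\#h)(b\#h'))=\langle\alpha_A, a(h_{(1)}\cdot b)\rangle\varepsilon(h_{(2)})\varepsilon(h')$; by Lemma~\ref{lem_properties_for_x&alpha}(3) and the trace property of $\alpha_A$ (Lemma~\ref{lem_properties_for_x&alpha}(1), $\alpha_A$ is the trace of the left regular representation, hence $\langle\alpha_A,ab\rangle=\langle\alpha_A,ba\rangle$), this collapses to $\langle\alpha_A,a\rangle\varepsilon(h)\langle\alpha_A,b\rangle\varepsilon(h')=\tilde\varepsilon(a\#h)\tilde\varepsilon(b\#h')$. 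Next I would verify $\tilde\Delta$ is an algebra map; this is the longest computation. Writing $\tilde\Delta(a\#h)=a(R^2\cdot x^1)\#R^1h_{(1)}\otimes x^2\#h_{(2)}$ and expanding $\tilde\Delta((a\#h)(b\#h'))$ versus $\tilde\Delta(a\#h)\tilde\Delta(b\#h')$ in $(A\#H)^{\otimes 2}$, the discrepancy is reconciled using: (i) the quasi-triangular intertwining relation $R^1h_{(1)}\otimes R^2h_{(2)}=h_{(2)}R^1\otimes h_{(1)}R^2$ to slide $R$ past $h$; (ii) the hexagon axioms to split the $R$ coming from one factor against the $\Delta$ applied to the other; (iii) quantum commutativity of $A$ to handle $a(R^2\cdot x^1)\cdot(\text{image of }b)$; and (iv) the symmetry of $x$ (symmetric separability idempotent) to match the two legs. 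I would organize this by first checking $\tilde\Delta(1_A\#h)=\tilde\Delta(1_A\#1)(1_A\#h\otimes 1_A\#h)$-type reductions and $\tilde\Delta(a\#1)$ separately, then combining.

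Then I would check the weak counit axioms. The left one, $(\tilde\varepsilon\otimes\mathrm{id})\tilde\Delta(a\#h)=$ the first "source" idempotent applied to $a\#h$, amounts to computing $\langle\alpha_A, a(R^2\cdot x^1)\rangle\varepsilon(R^1)\varepsilon(h_{(1)})\, x^2\#h_{(2)}$; since $\varepsilon(R^1)R^2=1$ (normalization of $R$), this is $\langle\alpha_A, a x^1\rangle x^2\#h$, and by Lemma~\ref{lem_properties_for_x&alpha}(1) (the dual-basis property $\langle a^\ast, x^1\rangle (x^2\rightharpoonup\alpha_A)=a^\ast$, equivalently $\langle\alpha_A, a x^1\rangle x^2 = $ suitable element) this gives the expected counital-idempotent expression; the right one is symmetric. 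For the weak comultiplicativity of the unit I would compute $\tilde\Delta(1_A\#1_H)=R^2\cdot x^1\#R^1\otimes x^2\#1$ and then verify the nonlinear identity $(\tilde\Delta\otimes\mathrm{id})\tilde\Delta(1)=(\tilde\Delta(1)\otimes 1)(1\otimes\tilde\Delta(1))$; this reduces to the separability identity for $x$ (applied twice) together with the hexagon axiom for $R$, with symmetry of $x$ and involutivity of $H$ (Lemma~\ref{lem_properties_for_x&alpha}(2)) ensuring the two products on the right-hand side agree. The main obstacle I anticipate is the bookkeeping in showing $\tilde\Delta$ is multiplicative: keeping track of the several copies of $R$ and of $x$ and applying the hexagon and quantum-commutativity relations in the right order is delicate, and it is the step where a sign or ordering error is easiest to make; everything else follows fairly mechanically from the three parts of Lemma~\ref{lem_properties_for_x&alpha} and the standard properties of $R$.
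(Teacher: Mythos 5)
Your overall strategy --- a direct verification of the weak bialgebra axioms using quantum commutativity, the hexagon relations for $R$, and the properties of the symmetric separability idempotent --- is the same as the paper's, and your treatment of multiplicativity of $\tilde{\Delta}$, coassociativity, and the weak comultiplicativity of the unit is in the right spirit. However, your first step contains a genuine error: $\tilde{\varepsilon}$ is \emph{not} multiplicative. Already $\tilde{\varepsilon}\left(  1_{A}\#1_{H}\right)  =\left\langle \alpha_{A},1_{A}\right\rangle =\dim A$, and since $\alpha_{A}$ is the trace of the left regular representation it is not an algebra map: for $A=k\times k$ one has $\left\langle \alpha_{A},e_{1}e_{2}\right\rangle =0$ while $\left\langle \alpha_{A},e_{1}\right\rangle \left\langle \alpha_{A},e_{2}\right\rangle =1$. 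Your claimed collapse of $\left\langle \alpha_{A},a\left(  h\cdot b\right)  \right\rangle \varepsilon\left(  h^{\prime}\right)  $ to $\left\langle \alpha_{A},a\right\rangle \varepsilon\left(  h\right)  \left\langle \alpha_{A},b\right\rangle \varepsilon\left(  h^{\prime}\right)  $ does not follow from Lemma~\ref{lem_properties_for_x&alpha}: part 3) applies only when the entire argument of $\alpha_{A}$ is acted on by $h$, and the trace property does not make $\alpha_{A}$ multiplicative. This is not cosmetic: the failure of multiplicativity of the counit is precisely what makes the structure ``weak,'' and the axiom that actually has to be checked is $\tilde{\varepsilon}\left(  XYZ\right)  =\tilde{\varepsilon}\left(  XY_{\left(  1\right)  }\right)  \tilde{\varepsilon}\left(  Y_{\left(  2\right)  }Z\right)  =\tilde{\varepsilon}\left(  XY_{\left(  2\right)  }\right)  \tilde{\varepsilon}\left(  Y_{\left(  1\right)  }Z\right)  $ for all $X,Y,Z\in A\#H$. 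Your proposal never verifies this; it would be trivial if $\tilde{\varepsilon}$ were multiplicative, but it is not. The paper's verification is a genuine computation: one reduces $\tilde{\varepsilon}\left(  \left(  a\#h\right)  \left(  b\#g\right)  _{\left(  1\right)  }\right)  \tilde{\varepsilon}\left(  \left(  b\#g\right)  _{\left(  2\right)  }\left(  c\#f\right)  \right)  $ to $\left\langle \alpha_{A},a\left(  h\cdot\left(  bx^{1}\right)  \right)  \right\rangle \left\langle \alpha_{A},x^{2}\left(  g\cdot c\right)  \right\rangle \varepsilon\left(  f\right)  $ using $\varepsilon\left(  R^{1}\right)  R^{2}=1_{H}$, and then recombines via the dual-basis identity $\left\langle \alpha_{A},x^{2}d\right\rangle x^{1}=d$ (part 1) of Lemma~\ref{lem_properties_for_x&alpha} together with symmetry of $x$) to obtain $\left\langle \alpha_{A},a\left(  h\cdot\left(  b\left(  g\cdot c\right)  \right)  \right)  \right\rangle \varepsilon\left(  f\right)  =\tilde{\varepsilon}\left(  \left(  a\#h\right)  \left(  b\#g\right)  \left(  c\#f\right)  \right)  $. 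This step must be supplied.

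A smaller point of confusion: what you call the ``left weak counit axiom,'' namely $\left(  \tilde{\varepsilon}\otimes id\right)  \tilde{\Delta}\left(  a\#h\right)  $, is just the ordinary counit axiom of the coalgebra $\left(  A\#H,\tilde{\Delta},\tilde{\varepsilon}\right)  $, and its value must be $a\#h$ itself, not a source or target counital idempotent applied to $a\#h$. Your computation does land on $\left\langle \alpha_{A},ax^{1}\right\rangle x^{2}\#h=a\#h$, so the calculation is correct even though the stated interpretation is off. With the weak counit axiom verified as above, the remainder of your outline matches the paper's argument.
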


Write $\tilde{1}_{\left(  1\right)  }\otimes\tilde{1}_{\left(  2\right)
}=\tilde{\Delta}\left(  1_{A}\otimes1_{H}\right)  =R^{2}\cdot x^{1}%
\#R^{1}\otimes x^{2}\#1_{H}$. We use the notation $x_{i}={x_{i}}^{1}%
\otimes{x_{i}}^{2}=x,\allowbreak\ \forall~i\in\mathbb{N}^{+}$.

\begin{lemma}
For any $a,\ b\in A$, $h\in H$,
\begin{align}
& \left(  a\#1_{H}\right)  \left(  R^{2}\cdot b\#R^{1}\right)  =\left(
R^{2}\cdot b\#R^{1}\right)  \left(  a\#1_{H}\right)  ;\label{eq1.1}\\
& \left(  {R_{1}}^{2}\cdot a\#{R_{1}}^{1}\right)  \left(  {R_{2}}^{2}\cdot
b\#{R_{2}}^{1}\right)  =R^{2}\cdot\left(  ba\right)  \#R^{1};\label{eq1.2}\\
& \tilde{\Delta}\left(  a\#h\right)  =\tilde{1}_{\left(  1\right)  }\left(
a\#h_{\left(  1\right)  }\right)  \otimes\tilde{1}_{\left(  2\right)  }\left(
1_{A}\#h_{\left(  2\right)  }\right)  ;\label{eq1.3}\\
& \tilde{1}_{\left(  1\right)  }\left(  1_{A}\#h_{\left(  1\right)  }\right)
\otimes\tilde{1}_{\left(  2\right)  }\left(  1_{A}\#h_{\left(  2\right)
}\right)  =\left(  1_{A}\#h_{\left(  1\right)  }\right)  \tilde{1}_{\left(
1\right)  }\otimes\left(  1_{A}\#h_{\left(  2\right)  }\right)  \tilde
{1}_{\left(  2\right)  }.\label{eq1.4}%
\end{align}

\end{lemma}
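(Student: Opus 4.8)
The plan is to establish the four identities \eqref{eq1.1}--\eqref{eq1.4} directly from the definition of the smash product multiplication $(a\#h)(b\#k) = a(h_{(1)}\cdot b)\#h_{(2)}k$, the quantum commutativity relation $ab = (R^{2}\cdot b)(R^{1}\cdot a)$, and the quasi-triangularity axioms for $R$ recalled in Section~\ref{Sec_Preli}. Each is a finite Sweedler-notation computation, so the work is to organize them in the right order, since the later ones reuse the earlier ones.

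First I would prove \eqref{eq1.1}. Expanding the left-hand side, $(a\#1_{H})(R^{2}\cdot b\#R^{1}) = a(R^{2}\cdot b)\#R^{1}$; for the right-hand side, $(R^{2}\cdot b\#R^{1})(a\#1_{H}) = (R^{2}\cdot b)(R^{1}_{(1)}\cdot a)\#R^{1}_{(2)}$. Using the second quasi-triangularity axiom $(\Delta\otimes\mathrm{id})(R) = R_{1}^{1}\otimes R_{2}^{1}\otimes R_{1}^{2}R_{2}^{2}$ to split $R^{1}$ and then applying quantum commutativity $(R_{2}^{2}\cdot b)(R_{2}^{1}\cdot a) = $ reassemble; alternatively, one recognizes the RHS as $b\cdot a$ in the $A$-$A$-bimodule notation $m\cdot a = ((R^{2}\cdot a)\#R^{1})m$ of Cohen--Westreich, and the LHS as $a\cdot(b\cdot 1)$, so \eqref{eq1.1} is just the statement that the left $A$-action $a\cdot m = (a\#1_{H})m$ and the right $A$-action commute on $A\#H$ regarded as a left module over itself, which is part of the bimodule structure recalled before \eqref{MonoidalStruOnAsmashH}. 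I would phrase it via that bimodule structure to keep it short. Next, \eqref{eq1.2}: the left-hand side is $(R_{1}^{2}\cdot a)(R_{1(1)}^{1}(R_{2}^{2}\cdot b))\#R_{1(2)}^{1}R_{2}^{1}$; I would use the compatibility axiom for $\Delta(R_{1}^{1})$ together with the hexagon-type identity $(\mathrm{id}\otimes\Delta)(R)=R_{1}^{1}R_{2}^{1}\otimes R_{2}^{2}\otimes R_{1}^{2}$ to collapse the products of $R$-components, landing on $(R^{2}\cdot(ba))\#R^{1}$ after one more application of quantum commutativity. This is the identity that says the two $A$-actions combine into the right action of the opposite algebra, i.e. $m\cdot(ba)$, which is again implicit in the bimodule formalism.

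For \eqref{eq1.3} I would start from the definition \eqref{eq:deltaofAsmashH}, $\tilde\Delta(a\#h) = a(R^{2}\cdot x^{1})\#R^{1}h_{(1)}\otimes x^{2}\#h_{(2)}$, and compute the right-hand side $\tilde1_{(1)}(a\#h_{(1)})\otimes\tilde1_{(2)}(1_{A}\#h_{(2)})$ using $\tilde1_{(1)}\otimes\tilde1_{(2)} = R^{2}\cdot x^{1}\#R^{1}\otimes x^{2}\#1_{H}$. The first tensorand is $(R^{2}\cdot x^{1}\#R^{1})(a\#h_{(1)}) = (R^{2}\cdot x^{1})(R^{1}_{(1)}\cdot a)\#R^{1}_{(2)}h_{(1)}$; applying $(\Delta\otimes\mathrm{id})(R)$ and then quantum commutativity $(R^{2}\cdot x^{1})(R^{1}\cdot a)$-type rewriting turns $(R^{2}\cdot x^{1})(R^{1}_{(1)}\cdot a)$ into $a(R^{2}\cdot x^{1})$ with a residual $R$ on the right leg, matching $a(R^{2}\cdot x^{1})\#R^{1}h_{(1)}$ after the $x^{2}$ leg absorbs the extra $R$-factor via Lemma~\ref{lem_properties_for_x&alpha}(2) (the relation $h\cdot x^{1}\otimes x^{2} = x^{1}\otimes S(h)\cdot x^{2}$, valid since $H$ is semisimple and involutory). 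The second tensorand $(x^{2}\#1_{H})(1_{A}\#h_{(2)}) = x^{2}\#h_{(2)}$ is immediate. Identity \eqref{eq1.4} is the same computation with $a=1_{A}$, together with the observation that both sides equal $R^{2}\cdot x^{1}\#R^{1}h_{(1)}\otimes x^{2}\#h_{(2)}$ after using the defining property \eqref{MonoidalStruOnAsmashH} of $\tilde\Delta$ on $1_{A}\#h$ and the first quasi-triangularity axiom $R^{1}h_{(1)}\otimes R^{2}h_{(2)} = h_{(2)}R^{1}\otimes h_{(1)}R^{2}$ to move $R$ past $h$ on both legs simultaneously.

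The main obstacle will be the bookkeeping in \eqref{eq1.3}: one has to juggle three independent copies of $R$ (two from splitting a single $R$ via $\Delta$, one from $\tilde1$) together with the separability idempotent $x$, and make the right substitution of quantum commutativity at the right moment so that the $x^{1}$ factor migrates past $a$ cleanly. The trace property Lemma~\ref{lem_properties_for_x&alpha}(3), $\langle\alpha_{A},h\cdot a\rangle = \varepsilon_{H}(h)\langle\alpha_{A},a\rangle$, and symmetry of $x$ are the tools that make the stray $H$-actions on $x$-components disappear; I would keep those two facts in reserve precisely for that step. Everything else is routine diagram-chasing in Sweedler notation.
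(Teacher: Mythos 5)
Your proposal is correct and follows essentially the same route as the paper: all four identities are direct Sweedler computations from quantum commutativity, the quasi-triangularity axioms, and the invariance properties of the symmetric separability idempotent, and the paper's own proof is just a terser version of this (it dispatches \eqref{eq1.1} and \eqref{eq1.2} as consequences of quantum commutativity, derives \eqref{eq1.3} from \eqref{eq1.1} in one line, and writes out only \eqref{eq1.4}). One bookkeeping slip worth fixing: it is \eqref{eq1.4}, not \eqref{eq1.3}, that needs the invariance of the idempotent, in the form $h_{\left(1\right)}\cdot x^{1}\otimes h_{\left(2\right)}\cdot x^{2}=\varepsilon\left(h\right)x^{1}\otimes x^{2}$ obtained from Lemma~\ref{lem_properties_for_x&alpha}, to kill the stray actions $h_{\left(2\right)}\cdot x^{1}\otimes h_{\left(3\right)}\cdot x^{2}$ left over after the flip $h_{\left(1\right)}R^{2}\otimes h_{\left(2\right)}R^{1}=R^{2}h_{\left(2\right)}\otimes R^{1}h_{\left(1\right)}$, whereas for \eqref{eq1.3} the computation already closes after $\left(\Delta\otimes id\right)\left(R\right)$ and a single application of quantum commutativity (or simply by invoking \eqref{eq1.1}), with no residual $R$-factor for the $x^{2}$ leg to absorb.
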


\begin{proof}
The equalities (\ref{eq1.1}) and (\ref{eq1.2}) follow from the fact that $A$
is quantum commutative, while (\ref{eq1.3}) follows from (\ref{eq1.1}). Note
that by 3) of Lemma~\ref{lem_properties_for_x&alpha}, for each $h\in H$
\[
h_{\left(  1\right)  }\cdot x^{1}\otimes h_{\left(  2\right)  }\cdot
x^{2}=\varepsilon\left(  h\right)  x^{1}\otimes x^{2},
\]
so
\begin{align*}
\left(  1_{A}\#h_{\left(  1\right)  }\right)  \tilde{1}_{\left(  1\right)
}\otimes\left(  1_{A}\#h_{\left(  2\right)  }\right)  \tilde{1}_{\left(
2\right)  }  & =\left(  h_{\left(  1\right)  }R^{2}\right)  \cdot
x^{1}\#h_{\left(  2\right)  }R^{1}\otimes h_{\left(  3\right)  }\cdot
x^{2}\#h_{\left(  4\right)  }\\
& =\left(  R^{2}h_{\left(  2\right)  }\right)  \cdot x^{1}\#R^{1}h_{\left(
1\right)  }\otimes h_{\left(  3\right)  }\cdot x^{2}\#h_{\left(  4\right)  }\\
& =\tilde{1}_{\left(  1\right)  }\left(  1_{A}\#h_{\left(  1\right)  }\right)
\otimes\tilde{1}_{\left(  2\right)  }\left(  1_{A}\#h_{\left(  2\right)
}\right)  .
\end{align*}
\bigskip
\end{proof}

\begin{proof}
[ Proof of Proposition~\ref{prop_wba_stru}] By (\ref{eq1.2}), $\left(
\tilde{1}_{\left(  1\right)  }\otimes\tilde{1}_{\left(  2\right)  }\right)
^{2}=\tilde{1}_{\left(  1\right)  }\otimes\tilde{1}_{\left(  2\right)  }$,
thus for any $a,b\in A$ and $h,g\in H$,%
\begin{align*}
\tilde{\Delta}\left(  a\#h\right)  \tilde{\Delta}\left(  b\#g\right)   &
=\tilde{1}_{\left(  1\right)  }\left(  a\#h_{\left(  1\right)  }\right)
\tilde{1}_{\left(  1^{\prime}\right)  }\left(  b\#g_{\left(  1\right)
}\right)  \otimes\tilde{1}_{\left(  2\right)  }\left(  1_{A}\#h_{\left(
2\right)  }\right)  \tilde{1}_{\left(  2^{\prime}\right)  }\left(
1_{A}\#g_{\left(  2\right)  }\right) \\
& =\tilde{1}_{\left(  1\right)  }\tilde{1}_{\left(  1^{\prime}\right)
}\left(  a\#h_{\left(  1\right)  }\right)  \left(  b\#g_{\left(  1\right)
}\right)  \otimes\tilde{1}_{\left(  2\right)  }\tilde{1}_{\left(  2^{\prime
}\right)  }\left(  1_{A}\#h_{\left(  2\right)  }\right)  \left(
1_{A}\#g_{\left(  2\right)  }\right) \\
& =\tilde{1}_{\left(  1\right)  }\left(  a\left(  h_{\left(  1\right)  }\cdot
b\right)  \#h_{\left(  2\right)  }g_{\left(  1\right)  }\right)  \otimes
\tilde{1}_{\left(  2\right)  }\left(  1_{A}\#h_{\left(  3\right)  }g_{\left(
2\right)  }\right) \\
& =\tilde{\Delta}\left(  \left(  a\#h\right)  \left(  b\#g\right)  \right)  .
\end{align*}
By (\ref{eq1.1}),
\begin{align*}
\tilde{\Delta}\left(  \tilde{1}_{\left(  1\right)  }\right)  \otimes\tilde
{1}_{\left(  2\right)  }  & =\left(  {R_{1}}^{2}{R_{3}}^{2}\cdot{x_{1}}%
^{1}\right)  \left(  {R_{2}}^{2}\cdot{x_{2}}^{1}\right)  \#{R_{2}}^{1}{R_{1}%
}^{1}\otimes x{_{2}}^{2}\#{R_{3}}^{1}\otimes{x_{1}}^{2}\#1_{H}\\
& =\left(  {R_{1}}^{2}\cdot\left(  \left(  {R_{3}}^{2}\cdot{x_{1}}^{1}\right)
{x_{2}}^{1}\right)  \right)  \#{R_{1}}^{1}\otimes x{_{2}}^{2}\#{R_{3}}%
^{1}\otimes{x_{1}}^{2}\#1_{H}\\
& =\left(  {R_{1}}^{2}\cdot{x_{2}}^{1}\right)  \#{R_{1}}^{1}\otimes x{_{2}%
}^{2}\left(  {R_{3}}^{2}\cdot{x_{1}}^{1}\right)  \#{R_{3}}^{1}\otimes{x_{1}%
}^{2}\#1_{H}\\
& =\tilde{1}_{\left(  1\right)  }\otimes\tilde{1}_{\left(  2\right)  }%
\tilde{1}_{\left(  1^{\prime}\right)  }\otimes\tilde{1}_{\left(  2^{\prime
}\right)  }\\
& =\tilde{1}_{\left(  1\right)  }\otimes\tilde{\Delta}\left(  \tilde
{1}_{\left(  2\right)  }\right)  .
\end{align*}
Since for any $a\in A$ and $h\in H$,
\begin{align*}
\left(  \tilde{\Delta}\otimes id_{A}\right)  \tilde{\Delta}\left(
a\#h\right)   & =\tilde{\Delta}\left(  \tilde{1}_{\left(  1\right)  }\right)
\tilde{\Delta}\left(  a\#h_{\left(  1\right)  }\right)  \otimes\tilde
{1}_{\left(  2\right)  }\left(  1\#h_{\left(  2\right)  }\right) \\
& =\tilde{1}_{\left(  1\right)  }\left(  a\#h_{\left(  1\right)  }\right)
\otimes\tilde{1}_{\left(  2\right)  }\tilde{1}_{\left(  1^{\prime}\right)
}\left(  1\#h_{\left(  2\right)  }\right)  \otimes\tilde{1}_{\left(
2^{\prime}\right)  }\left(  1\#h_{\left(  3\right)  }\right) \\
& =\left(  id_{A}\otimes\tilde{\Delta}\right)  \tilde{\Delta}\left(
a\#h\right)  ,
\end{align*}
$\tilde{\Delta}$ is coassociative.

It is routine to check the counit axiom. To see that $A\#H$ is a weak
bialgebra, we need to verify the remaining axioms. For any $a,\ b,\ c\in A$,
and $h,\ g,\ f\in H,$
\begin{align*}
\tilde{\varepsilon}\left(  \left(  a\#h\right)  \left(  b\#g\right)  \left(
c\#f\right)  \right)   & =\tilde{\varepsilon}\left(  a\left(  h_{\left(
1\right)  }\cdot b\right)  \left(  h_{\left(  2\right)  }g_{\left(  1\right)
}\cdot c\right)  \#h_{\left(  3\right)  }g_{\left(  2\right)  }f\right) \\
& =\left\langle \alpha_{A},a\left(  h\cdot\left(  b\left(  g\cdot c\right)
\right)  \right)  \right\rangle \varepsilon\left(  f\right)  ,
\end{align*}
so
\begin{align*}
\tilde{\varepsilon}\left(  \left(  a\#h\right)  \left(  b\#g\right) _{\left(
1\right)  }\right)  \tilde{\varepsilon}\left(  \left(  b\#g\right) _{\left(
2\right)  }\left(  c\#f\right)  \right)   & =\tilde{\varepsilon}\left(
\left(  a\#h\right)  \left(  b\left(  R^{2}\cdot x^{1}\right)  \#R^{1}%
g_{\left(  1\right)  }\right)  \right)  \tilde{\varepsilon}\left(  \left(
x^{2}\#g_{\left(  2\right)  }\right)  \left(  c\#f\right)  \right) \\
& =\left\langle \alpha_{A},a\left(  h\cdot\left(  bx^{1}\right)  \right)
\right\rangle \left\langle \alpha_{A},x^{2}\left(  g\cdot c\right)
\right\rangle \varepsilon\left(  f\right) \\
& =\left\langle \alpha_{A},a\left(  h\cdot\left(  b\left(  g\cdot c\right)
\right)  \right)  \right\rangle \varepsilon\left(  f\right) \\
& =\tilde{\varepsilon}\left(  \left(  a\#h\right)  \left(  b\#g\right)
\left(  c\#f\right)  \right)  .
\end{align*}
Similarly, one can show that
\[
\tilde{\varepsilon}\left(  \left(  a\#h\right)  \left(  b\#g\right) _{\left(
1\right)  }\right)  \tilde{\varepsilon}\left(  \left(  b\#g\right) _{\left(
2\right)  }\left(  c\#f\right)  \right)  =\tilde{\varepsilon}\left(  \left(
a\#h\right)  \left(  b\#g\right)  \left(  c\#f\right)  \right)  ,
\]
therefore $A\#H$ is a weak bialgebra.
\end{proof}

The source counital map $\tilde{\varepsilon}_{s}$ and target counital map
$\tilde{\varepsilon}_{t}$ are given by
\[
\tilde{\varepsilon}_{s}\left(  a\#h\right)  =\tilde{1}_{\left(  1\right)
}\tilde{\varepsilon}\left(  \left(  a\#h\right)  \tilde{1}_{\left(  2\right)
}\right)  =R^{2}\cdot x^{1}\#R^{1}\left\langle \alpha_{A},a\left(  h\cdot
x^{2}\right)  \right\rangle =\left(  R^{2}S\left(  h\right)  \right)  \cdot
a\#R^{1},
\]
and
\[
\tilde{\varepsilon}_{t}\left(  a\#h\right)  =\varepsilon\left(  h\right)
a\#1_{H}.
\]
The target and source subalgebras of $A\#H$ are respectively
\[
\left(  A\#H\right) _{t}=A\#1_{H}\cong A
\]
and
\[
\left(  A\#H\right) _{s}=\left\{  R^{2}\cdot a\#R^{1}\mid a\in A\right\}
\cong A^{op}.
\]

\begin{theorem}
\label{Thm_WHA_stru_of_AsmH} If the action of the Drinfeld element $u=S\left(
R^{2}\right)  R^{1}$ on $A$ is trivial, then $A\#H$ is a weak Hopf algebra,
with the antipode $\tilde{S}$ given by
\begin{equation}
\tilde{S}\left(  a\#h\right)  =\left(  1_{A}\#S\left(  h\right)  \right)
\left(  R^{2}\cdot a\#R^{1}\right)  .\label{eq:antipodeofAsmashH}%
\end{equation}

Furthermore, if $A$ belongs to the M\"{u}ger center of $_{H}\mathcal{M}$,
i.e., for all $a\in A$,%
\[
\left(  R_{2}{}^{2}R_{1}{}^{1}\right)  \cdot a\otimes R_{2}{}^{1}R_{1}{}%
^{2}=a\otimes1_{H},
\]
then $\left(  A\#H,\mathcal{R}\right)  $ is a quasi-triangular with R-matrix
\[
\mathcal{R}=\tilde{1}_{\left(  2\right)  }\left(  1_{A}\#R^{1}\right)
\tilde{1}_{\left(  1^{\prime}\right)  }\otimes\tilde{1}_{\left(  1\right)
}\left(  1_{A}\#R^{2}\right)  \tilde{1}_{\left(  2^{\prime}\right)  }.
\]
If in addition that $\left(  H,R\right)  $ is triangular, then $\left(
A\#H,\mathcal{R}\right)  $ is also triangular.
\end{theorem}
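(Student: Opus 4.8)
The plan is to establish Theorem~\ref{Thm_WHA_stru_of_AsmH} in three stages, matching its three assertions. First, to prove that $\tilde S$ as defined in \eqref{eq:antipodeofAsmashH} is a weak Hopf antipode, I would verify the three defining identities: $m(\tilde S\otimes\mathrm{id})\tilde\Delta(x)=\tilde\varepsilon_t(x)$, $m(\mathrm{id}\otimes\tilde S)\tilde\Delta(x)=\tilde\varepsilon_s(x)$, and $m(m\otimes\mathrm{id})(\tilde S\otimes\mathrm{id}\otimes\tilde S)(\tilde\Delta\otimes\mathrm{id})\tilde\Delta(x)=\tilde S(x)$ for $x=a\#h$. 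Since $\tilde\Delta(a\#h)=a(R^2\cdot x^1)\#R^1h_{(1)}\otimes x^2\#h_{(2)}$, the first two reduce after expanding $\tilde S$ via \eqref{eq:antipodeofAsmashH} to computations in $A\#H$ that use quantum commutativity \eqref{eq1.1}--\eqref{eq1.2}, the antipode axioms for $H$, the defining relations of $(H,R)$, and crucially the hypothesis that $u=S(R^2)R^1$ acts trivially on $A$ — this is exactly what is needed to collapse a factor $(u\cdot a)$ or $(S(u)\cdot a)$ back to $a$ when the Drinfeld element emerges from combining an $R^1$ from $\tilde\Delta$ with the $S$ applied to the partner leg. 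I expect the trivial-$u$ hypothesis to be the linchpin of this stage: without it the candidate $\tilde S$ fails the counital-projection identities.

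Second, for the quasi-triangular assertion, I would take $\mathcal R=\tilde1_{(2)}(1_A\#R^1)\tilde1_{(1')}\otimes\tilde1_{(1)}(1_A\#R^2)\tilde1_{(2')}$ and check the weak-quasi-triangular axioms: $\mathcal R\in\tilde\Delta^{\mathrm{cop}}(1)(A\#H\otimes A\#H)\tilde\Delta(1)$ (or the appropriate normalization), $\tilde\Delta^{\mathrm{cop}}(x)\mathcal R=\mathcal R\tilde\Delta(x)$ for all $x$, and the two hexagon-type identities $(\tilde\Delta\otimes\mathrm{id})(\mathcal R)=\mathcal R_{13}\mathcal R_{23}$ and $(\mathrm{id}\otimes\tilde\Delta)(\mathcal R)=\mathcal R_{13}\mathcal R_{12}$ relative to the weak comultiplication. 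Here the key inputs are the identities \eqref{eq1.1}--\eqref{eq1.4} describing how $\tilde1_{(1)}\otimes\tilde1_{(2)}$ interacts with $A$ and with $1_A\#h$, together with the Müger-center hypothesis $(R_2{}^2R_1{}^1)\cdot a\otimes R_2{}^1R_1{}^2=a\otimes1_H$, which is precisely the condition that makes the images of the two "$R$-legs" commute past the algebra $A$ sitting inside $A\#H$ in the way demanded by the intertwining relation $\tilde\Delta^{\mathrm{cop}}(x)\mathcal R=\mathcal R\tilde\Delta(x)$. I would phrase the verification by pushing $(a\#h)$ through $\mathcal R$ one tensor factor at a time, reducing $a$-terms using the Müger-center identity and $h$-terms using the quasi-triangularity axioms of $(H,R)$ and \eqref{eq1.4}.

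Third, for the triangularity claim, assuming $R_{21}R=1\otimes1$ in $H\otimes H$, I would show $\mathcal R_{21}\mathcal R=\tilde\Delta^{\mathrm{cop}}(1)\tilde\Delta(1)$ (the weak-bialgebra analogue of $\mathcal R^{-1}=\mathcal R_{21}$), which follows by substituting $R_{21}R=1\otimes1$ into the product $\mathcal R_{21}\mathcal R$ and simplifying via \eqref{eq1.1}--\eqref{eq1.2} and idempotency of $\tilde1_{(1)}\otimes\tilde1_{(2)}$; the Müger-center hypothesis is automatic once $(H,R)$ is triangular, since triangularity forces $R_{21}R=1\otimes1$ and hence the symmetric-center condition.

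The main obstacle I anticipate is the second stage, specifically the intertwining identity $\tilde\Delta^{\mathrm{cop}}(x)\mathcal R=\mathcal R\tilde\Delta(x)$: it must hold for every $x=a\#h$, and the $a$-dependence and $h$-dependence are entangled through the smash-product multiplication $(a\#h)(b\#g)=a(h_{(1)}\cdot b)\#h_{(2)}g$. Checking it will require carefully tracking how the adjoint-type action of $h$ on the various $x^i$ and $R^j$ factors inside $\tilde1_{(1)}\otimes\tilde1_{(2)}$ interacts with the Müger-center relation; the separability-idempotent identities in Lemma~\ref{lem_properties_for_x&alpha}, particularly $h\cdot x^1\otimes x^2=x^1\otimes S(h)\cdot x^2$ and $h_{(1)}\cdot x^1\otimes h_{(2)}\cdot x^2=\varepsilon(h)x^1\otimes x^2$, will be the tools that make the bookkeeping close up. Everything else I expect to be routine, if lengthy, Sweedler-notation manipulation.
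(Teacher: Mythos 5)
Your proposal follows essentially the same route as the paper: direct verification of the weak-Hopf antipode axioms, with the trivial action of $u$ entering exactly where you predict (the paper isolates it in the identities $\tilde{S}(\tilde{1}_{(1)})\tilde{1}_{(2)}=1_A\#1_H=\tilde{1}_{(1)}\tilde{S}(\tilde{1}_{(2)})$, after first checking that $\tilde{S}$ is anti-multiplicative), followed by a direct check of the quasi-triangularity axioms in which the M\"{u}ger-center hypothesis is used, as you anticipate, to move the $A$-leg past the $R$-legs in the intertwining relation $\tilde{\Delta}^{cop}(x)\mathcal{R}=\mathcal{R}\tilde{\Delta}(x)$. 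Two concrete points need repair before your sketch is complete. First, your list of quasi-triangularity axioms omits the invertibility of $\mathcal{R}$: for a weak Hopf algebra one must exhibit $\bar{\mathcal{R}}$ with $\bar{\mathcal{R}}\mathcal{R}=\tilde{\Delta}(\tilde{1})$ and $\mathcal{R}\bar{\mathcal{R}}=\tilde{\Delta}^{cop}(\tilde{1})$. The paper does this by first using the M\"{u}ger-center identity, in the equivalent form $R^{1}\cdot a\otimes R^{2}=R^{2}\cdot a\otimes S(R^{1})$, to collapse $\mathcal{R}$ to $\left(1_{A}\#R^{1}\right)\tilde{1}_{(1)}\otimes\left(1_{A}\#R^{2}\right)\tilde{1}_{(2)}=\tilde{1}_{(2)}\left(1_{A}\#R^{1}\right)\otimes\tilde{1}_{(1)}\left(1_{A}\#R^{2}\right)$, and then taking $\bar{\mathcal{R}}=\tilde{1}_{(1)}\left(1_{A}\#S(R^{1})\right)\otimes\tilde{1}_{(2)}\left(1_{A}\#R^{2}\right)$; this simplification also makes the hexagon identities and the intertwining computation tractable, so you should establish it early rather than pushing $a\#h$ through the unsimplified four-factor expression. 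Second, the triangularity condition for a quasi-triangular weak Hopf algebra is $\mathcal{R}_{21}=\bar{\mathcal{R}}$, equivalently $\mathcal{R}_{21}\mathcal{R}=\tilde{\Delta}(\tilde{1})$ together with $\mathcal{R}\mathcal{R}_{21}=\tilde{\Delta}^{cop}(\tilde{1})$, not $\mathcal{R}_{21}\mathcal{R}=\tilde{\Delta}^{cop}(\tilde{1})\tilde{\Delta}(\tilde{1})$ as you write (that product need not equal $\tilde{\Delta}(\tilde{1})$ in a weak bialgebra); with the explicit $\bar{\mathcal{R}}$ above, triangularity of $(A\#H,\mathcal{R})$ follows at once from $S(R^{1})\otimes R^{2}=R^{2}\otimes R^{1}$. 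Your observation that triangularity of $(H,R)$ renders the M\"{u}ger-center hypothesis automatic is correct, though the theorem assumes that hypothesis separately.
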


\begin{proof}
First, we see that $\tilde{S}$ is an anti-algebra homomorphism. For any $a\in
A,\ h\in H,$
\begin{align*}
\tilde{S}\left(  \left(  1_{A}\#h\right)  \left(  a\#1_{H}\right)  \right)   &
=\left(  1\#S\left(  h_{\left(  2\right)  }\right)  \right)  \left(  \left(
R^{2}h_{\left(  1\right)  }\right)  \cdot a\#R^{1}\right) \\
& =R^{2}\cdot a\#R^{1}S\left(  h\right)  =\tilde{S}\left(  a\#1_{H}\right)
\tilde{S}\left(  1_{A}\#h\right)  .
\end{align*}
It is easy to check
\begin{equation}
\tilde{S}\left(  \tilde{1}_{\left(  1\right)  }\right)  \tilde{1}_{\left(
2\right)  }=\left(  S\left(  u\right)  \cdot x^{1}\#1_{H}\right)  \left(
x^{2}\#1_{H}\right)  =1_{A}\#1_{H},\label{S(1_1)1_2}%
\end{equation}%
\begin{equation}
\tilde{1}_{\left(  1\right)  }\tilde{S}\left(  \tilde{1}_{\left(  2\right)
}\right)  =\left(  {R_{1}}^{2}\cdot x^{1}\#{R_{1}}^{1}\right)  \left(  {R_{2}%
}^{2}\cdot x^{1}\#{R_{2}}^{1}\right)  =1_{A}\#1_{H}.\label{1_1S(1_2)}%
\end{equation}
For any $a\in A,\ h\in H$,
\begin{align*}
\tilde{S}\left(  \left(  a\#h\right) _{\left(  1\right)  }\right)  \left(
a\#h\right) _{\left(  2\right)  }  & =\tilde{S}\left(  \tilde{1}_{\left(
1\right)  }\left(  a\#h_{\left(  1\right)  }\right)  \right)  \tilde
{1}_{\left(  2\right)  }\left(  1_{A}\#h_{\left(  2\right)  }\right) \\
& =\left(  {R}^{2}S\left(  h\right)  \right)  \cdot a\#1_{A}=\tilde
{\varepsilon}_{s}\left(  a\#h\right)  ,\ \text{by (\ref{S(1_1)1_2})}.
\end{align*}
Similarly, by using (\ref{1_1S(1_2)}) we obtain $\left(  a\#h\right) _{\left(
1\right)  }\tilde{S}\left(  \left(  a\#h\right) _{\left(  2\right)  }\right)
=\tilde{\varepsilon}_{t}\left(  a\#h\right)  $. Hence,
\begin{align*}
& \tilde{S}\left(  \left(  a\#h\right) _{\left(  1\right)  }\right)  \left(
a\#h\right) _{\left(  2\right)  }\tilde{S}\left(  \left(  a\#h\right)
_{\left(  3\right)  }\right) \\
& =\tilde{S}\left(  \tilde{1}_{\left(  1\right)  }\left(  a\#h_{\left(
1\right)  }\right)  \right)  \tilde{1}_{\left(  2\right)  }\tilde{1}_{\left(
1^{\prime}\right)  }\left(  1\#h_{\left(  2\right)  }\right)  \tilde{S}\left(
\tilde{1}_{\left(  2^{\prime}\right)  }\left(  1\#h_{\left(  3\right)
}\right)  \right) \\
& =\tilde{S}\left(  a\#h_{\left(  1\right)  }\right)  \tilde{S}\left(
\tilde{1}_{\left(  1\right)  }\right)  \tilde{1}_{\left(  2\right)  }\tilde
{1}_{\left(  1^{\prime}\right)  }\left(  1\#h_{\left(  2\right)  }\right)
\left(  1\#S\left(  h_{\left(  3\right)  }\right)  \right)  \tilde{S}\left(
\tilde{1}_{\left(  2^{\prime}\right)  }\right) \\
& =\tilde{S}\left(  a\#h\right)  ,
\end{align*}
that is, $\tilde{S}$ is an antipode for $A\#H$.

Assume further that $\left(  R_{2}{}^{2}R_{1}{}^{1}\right)  \cdot a\otimes
R_{2}{}^{1}R_{1}{}^{2}=a\otimes1_{H}$ holds for all $a\in A$, then
\begin{equation}
{R}^{1}\cdot a\otimes R{}^{2}={R}^{2}\cdot a\otimes S\left(  R{}^{1}\right)
.\label{EqEquivToAinMCenter}%
\end{equation}
So we have%
\begin{align*}
\left(  1_{A}\#{R_{1}}^{1}\right)  \tilde{1}_{\left(  1\right)  }%
\otimes\left(  1_{A}\#{R_{1}}^{2}\right)  \tilde{1}_{\left(  2\right)  }  &
=\left(  1_{A}\#{R_{1}}^{1}\right)  \left(  {R_{2}}^{2}\cdot{x}^{1}\#{R_{2}%
}^{1}\right)  \otimes\left(  1_{A}\#{R_{1}}^{2}\right)  \left(  x^{2}%
\#1_{H}\right) \\
& =\left(  \left(  {R_{1}}^{1}{R_{2}}^{2}\right)  \cdot{x}^{1}\#{R_{3}}%
^{1}{R_{2}}^{1}\right)  \otimes\left(  1_{A}\#{R_{1}}^{2}{R_{3}}^{2}\right)
\left(  x^{2}\#1_{H}\right) \\
& ={R_{1}}^{1}\cdot{x}^{1}\#{R_{3}}^{1}{R_{4}}^{1}{R_{2}}^{1}\otimes\left(
1_{A}\#{R_{1}}^{2}\right)  \left(  \left(  {R_{4}}^{2}S({R_{2}}^{2})\right)
\cdot x^{2}\#{R_{3}}^{2}\right) \\
& =\left(  {R_{1}}^{1}{R_{2}}^{1}\right)  \cdot{x}^{1}\#{R_{3}}^{1}%
\otimes\left(  {R_{2}}^{2}\cdot x^{2}\#{R_{1}}^{2}{R_{3}}^{2}\right) \\
& ={x}^{1}\#{R_{3}}^{1}\otimes\left(  {R_{1}}^{1}\cdot x^{2}\#S\left(  {R_{1}%
}^{2}\right)  {R_{3}}^{2}\right) \\
& ={x}^{2}\#{R_{3}}^{1}\otimes\left(  {R_{1}}^{2}\cdot x^{1}\#{R_{1}}%
^{1}{R_{3}}^{2}\right) \\
& =\tilde{1}_{\left(  2\right)  }\left(  1_{A}\#{R_{1}}^{1}\right)
\otimes\tilde{1}_{\left(  1\right)  }\left(  1_{A}\#{R_{1}}^{2}\right)  ,
\end{align*}
and it follows that
\begin{equation}
\mathcal{R=}\left(  1_{A}\#{R_{1}}^{1}\right)  \tilde{1}_{\left(  1\right)
}\otimes\left(  1_{A}\#{R_{1}}^{2}\right)  \tilde{1}_{\left(  2\right)
}=\tilde{1}_{\left(  2\right)  }\left(  1_{A}\#{R_{1}}^{1}\right)
\otimes\tilde{1}_{\left(  1\right)  }\left(  1_{A}\#{R_{1}}^{2}\right)
.\label{SimplifyR}%
\end{equation}
Let $\mathcal{\bar{R}=}\tilde{1}_{\left(  1\right)  }\left(  1_{A}\#S\left(
R^{1}\right)  \right)  \tilde{1}_{\left(  2^{\prime}\right)  }\otimes\tilde
{1}_{\left(  2\right)  }\left(  1_{A}\#R^{2}\right)  \tilde{1}_{\left(
1^{\prime}\right)  }$. We see that
\[
\mathcal{\bar{R}=}\tilde{1}_{\left(  1\right)  }\left(  1_{A}\#S(R^{1}%
)\right)  \otimes\tilde{1}_{\left(  2\right)  }\left(  1_{A}\#R^{2}\right)
=\left(  1_{A}\#S(R^{1})\right)  \tilde{1}_{\left(  2\right)  }\otimes\left(
1_{A}\#R^{2}\right)  \tilde{1}_{\left(  1\right)  }.
\]
Thus $\bar{\mathcal{R}}\mathcal{R}=\tilde{\Delta}\left(  \tilde{1}\right)  $
and $\mathcal{R}\bar{\mathcal{R}}=\tilde{\Delta}^{cop}\left(  \tilde
{1}\right)  $.

The identities $\left(  id\otimes\tilde{\Delta}\right)  \mathcal{R}%
=\mathcal{R}^{13}\mathcal{R}^{12}$ and $\left(  \tilde{\Delta}\otimes
id\right)  \mathcal{R}=\mathcal{R}^{13}\mathcal{R}^{23}$ are obvious. One can
check by using (\ref{EqEquivToAinMCenter}), that
\[
\tilde{1}_{\left(  1\right)  }\left(  a\#1_{H}\right)  \otimes\tilde
{1}_{\left(  2\right)  }=\tilde{1}_{\left(  1\right)  }\left(  1_{A}%
\#R^{2}\right)  \otimes\tilde{1}_{\left(  2\right)  }\left(  \left(
R^{1}\cdot a\right)  \#1_{H}\right)  .
\]
Hence for any $a\in A$ and $h\in H$,
\begin{align*}
\tilde{\Delta}^{cop}\left(  a\#h\right)  \mathcal{R}  & \mathcal{=}\tilde
{1}_{\left(  2\right)  }\left(  1_{A}\#h_{\left(  2\right)  }\right)  \left(
1_{A}\#R^{1}\right)  \tilde{1}_{\left(  1^{\prime}\right)  }\otimes\tilde
{1}_{\left(  1\right)  }\left(  a\#h_{\left(  1\right)  }\right)  \left(
1_{A}\#R^{2}\right)  \tilde{1}_{\left(  2^{\prime}\right)  }\\
& =\tilde{1}_{\left(  2\right)  }\left(  1_{A}\#R^{1}h_{\left(  1\right)
}\right)  \tilde{1}_{\left(  1^{\prime}\right)  }\otimes\tilde{1}_{\left(
1\right)  }\left(  a\#1_{H}\right)  \left(  1_{A}\#R^{2}h_{\left(  2\right)
}\right)  \tilde{1}_{\left(  2^{\prime}\right)  }\\
& =\tilde{1}_{\left(  2\right)  }\left(  \left(  {R_{2}}^{1}\cdot a\right)
\#{R_{1}}^{1}h_{\left(  1\right)  }\right)  \tilde{1}_{\left(  1^{\prime
}\right)  }\otimes\tilde{1}_{\left(  1\right)  }\left(  1_{A}\#{R_{2}}%
^{2}\right)  \left(  1_{A}\#{R_{1}}^{2}h_{\left(  2\right)  }\right)
\tilde{1}_{\left(  2^{\prime}\right)  }\\
& =\tilde{1}_{\left(  2\right)  }\left(  1_{A}\#R^{1}\right)  \left(
a\#h_{\left(  1\right)  }\right)  \tilde{1}_{\left(  1^{\prime}\right)
}\otimes\tilde{1}_{\left(  1\right)  }\left(  1_{A}\#R^{2}\right)  \left(
1_{A}\#h_{\left(  2\right)  }\right)  \tilde{1}_{\left(  2^{\prime}\right)
}\\
& =\mathcal{R}\tilde{\Delta}\left(  a\#h\right)  .
\end{align*}
This completes the proof that $\left(  A\#H,\mathcal{R}\right)  $ is a
quasi-triangular weak Hopf algebra.
\end{proof}

With this weak Hopf algebra structure the category $_{A\#H}\mathcal{M}$ is a
multitensor category, which is a monoidal subcategory of ${}_{A\#H}%
\operatorname{Mod} $ as introduced in~\cite{Cohen1994Supersymmetry}.

As we can see in Lemma~\ref{le:ThetaMap}, there is an algebra inclusion of
$A\#H$ into $\operatorname{End}A^{\ast}\otimes H$. Observing that in the case
when $H=kG$ and an $H$-simple module algebra $A$ are as in
Example~\ref{QC_algebra_over_kG}, $A\#H\cong M_{t}\left(  kG_{1}\right)  $
with matrix units $\left\{  E_{ij}=g_{i}\cdot e_{1}\#g_{i}g_{j}^{-1}\mid1\leq
i,j\leq t\right\}  $ and the injection $\varphi:G_{1}\rightarrow A\#H$,
mapping $h\in G_{1}$ to $\sum_{i=1}^{t}g_{i}\cdot e_{1}\#g_{i}hg_{i}^{-1}$. It
is routine to check that $x=\sum_{i=1}^{t}e_{i}\otimes e_{i}$ is the symmetric
separability idempotent, and
\[
\widetilde{\Delta}\left(  E_{ij}\right)  =E_{ij}\otimes E_{ij}\text{, and
}\widetilde{\Delta}\left(  \varphi\left(  h\right)  \right)  =\varphi\left(
h\right)  \otimes\varphi\left(  h\right)  \text{, for any }h\in G_{1}\text{.}%
\]
This illustrates that $A\#H$ can embed into $\operatorname{End}A^{\ast}\otimes
H$ as weak Hopf algebras if we identify $\operatorname{End}A^{\ast}$ with the
canonical weak Hopf algebra $M_{t}\left(  k\right)  $.

So it arises a natural question if there exists a weak Hopf algebra structure
on $\operatorname{End}A^{\ast}\otimes H$ which makes $\Theta:A\#H\rightarrow
\operatorname{End}A^{\ast}\otimes H$ a weak Hopf algebra embedding when $H$ is
not a group algebra.

In cooperation with the form of $R$-adjoint-stable algebra $N_{D}=D^{\ast
}\square_{D}\left(  H\otimes D\right)  $ for an $H$-module subalgebra $D$ of
$H_{R}$, we prefer to work on $A\otimes H\otimes A^{\ast}$ instead of
$\operatorname{End}A^{\ast}\otimes H$.

Let $\left(  H,R\right)  $ be a semisimple quasi-triangular Hopf algebra and
$A $ be a strongly separable quantum commutative left $H$-module algebra $A$
with the symmetric separability idempotent $x=x^{1}\otimes x^{2}$. Let
$B=A\otimes H\otimes A^{\ast}$.

The multiplication on $B$ is
\[
\left(  a\otimes h\otimes a^{\ast}\right)  \left(  b\otimes g\otimes b^{\ast
}\right)  =\left\langle b^{\ast},a\right\rangle b\otimes hg\otimes a^{\ast
},\text{ for }a,b\in A,h,g\in H,a^{\ast},b^{\ast}\in A^{\ast},
\]
and the unit of $B$ is
\[
1_{B}=x^{1}\otimes1_{H}\otimes x^{2}\rightharpoonup\alpha_{A}.
\]
Observe that $A$ is finite dimensional, so $A^{\ast}$ is a coalgebra.

\begin{proposition}
\label{prop_wha_stru_on_B}Define the coproduct, the counit, and the antipode
via
\begin{align*}
&  \Delta_{B}\left(  a\otimes h\otimes a^{\ast}\right)  =\left(  {R_{1}}%
^{2}\cdot x^{1}\right)  a\otimes{R_{2}}^{1}h_{\left(  1\right)  }{R_{1}}%
^{1}\otimes a_{\left(  2\right)  }^{\ast}\otimes x^{2}\otimes h_{\left(
2\right)  }\otimes a_{\left(  1\right)  }^{\ast}\vartriangleleft{R_{2}}^{2},\\
&  \varepsilon_{B}\left(  a\otimes h\otimes a^{\ast}\right)  =\left\langle
\alpha_{A},a\right\rangle \varepsilon\left(  h\right)  \left\langle a^{\ast
},1_{A}\right\rangle ,\\
&  S_{B}\left(  a\otimes h\otimes a^{\ast}\right)  =\left\langle a^{\ast
}\vartriangleleft{R_{2}}^{2},{x}^{1}\right\rangle {x}^{2}\otimes{R_{1}}%
^{1}S\left(  {R_{2}}^{1}h\right)  \otimes\alpha_{A}\leftharpoonup\left(
{R_{1}}^{2}\cdot a\right)  ,
\end{align*}
for all $a\in A$, $h\in H$, $a^{\ast}\in A^{\ast}$, where $\vartriangleleft$
is the right transpose action of $H$ on $A^{\ast}$, induced by the left
$H$-action on $A$. Let
\[
R_{B}=\left(  \left(  {R_{3}}^{2}\cdot{x_{2}}^{1}\right)  {x_{1}}^{1}%
\otimes{R_{2}}^{1}{R_{3}}^{1}\otimes\alpha_{A\left(  1\right)  }%
\vartriangleleft{R_{1}}^{2}\right)  \otimes\left(  {x_{2}}^{2}\otimes{R_{1}%
}^{1}{R_{2}}^{2}\otimes x{_{1}}^{2}\rightharpoonup\alpha_{A\left(  2\right)
}\right)  \,.
\]
Then $\left(  B,R_{B}\right)  $ is a quasi-triangular weak Hopf algebra.
\end{proposition}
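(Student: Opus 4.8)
The plan is to check directly that the data $(B,\mu_{B},1_{B},\Delta_{B},\varepsilon_{B},S_{B},R_{B})$ satisfies the axioms of a quasi-triangular weak Hopf algebra, organizing the computation so as to reuse the bookkeeping already done for $A\#H$. It is worth keeping in mind the algebra identification $B\cong\operatorname{End}A^{\ast}\otimes H$ (sending $a\otimes a^{\ast}\in A\otimes A^{\ast}$ to the rank-one endomorphism $c^{\ast}\mapsto\langle c^{\ast},a\rangle a^{\ast}$ of $A^{\ast}$), which shows at once that $B$ is semisimple, that $A\otimes1_{H}\otimes A^{\ast}$ is a copy of $\operatorname{End}A^{\ast}$, and that $1_{A}\otimes H\otimes(x^{2}\rightharpoonup\alpha_{A})$ is, up to the unit, a copy of $H$. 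All computations, however, are cleanest in the model $B=A\otimes H\otimes A^{\ast}$ with the given multiplication. The three identities of Lemma~\ref{lem_properties_for_x&alpha} — the dual-basis identity for $x^{1}\otimes(x^{2}\rightharpoonup\alpha_{A})$, the symmetry $h\cdot x^{1}\otimes x^{2}=x^{1}\otimes S(h)\cdot x^{2}$, and above all $\langle\alpha_{A},h\cdot a\rangle=\varepsilon(h)\langle\alpha_{A},a\rangle$ — are the workhorses, together with the quasi-triangularity axioms for $R$, the quantum commutativity $ab=(R^{2}\cdot b)(R^{1}\cdot a)$, and the coalgebra structure on $A^{\ast}$ dual to the multiplication of $A$.

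First I would establish that $B$ is a weak bialgebra, paralleling the proof of Proposition~\ref{prop_wba_stru}. To see that $\Delta_{B}$ is an algebra map $B\to B\otimes B$, by bilinearity it suffices to check multiplicativity on products of a ``matrix-unit part'' in $A\otimes A^{\ast}$ and an ``$H$-part'': the twists $R_{1}^{2}\cdot x^{1}$ and $a^{\ast}_{(1)}\vartriangleleft R_{2}^{2}$ appearing in $\Delta_{B}$ absorb the non-commutativity precisely as $R^{2}\cdot x^{1}\#R^{1}$ did for $\widetilde{\Delta}$, and the two comultiplication axioms for $R$ handle the $H$-part. Coassociativity is obtained by expanding $(\Delta_{B}\otimes\mathrm{id})\Delta_{B}$ and $(\mathrm{id}\otimes\Delta_{B})\Delta_{B}$ and redistributing: the two $R$-coproduct axioms move the copies of $R$, the coassociativity of $\Delta_{A^{\ast}}$ moves the $a^{\ast}_{(i)}$, and the symmetric-separability identities move the $x_{i}$, mirroring the step $\widetilde{\Delta}(\widetilde{1}_{(1)})\otimes\widetilde{1}_{(2)}=\widetilde{1}_{(1)}\otimes\widetilde{\Delta}(\widetilde{1}_{(2)})$. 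The counit axioms $\varepsilon_{B}(fgh)=\varepsilon_{B}(fg_{(1)})\varepsilon_{B}(g_{(2)}h)=\varepsilon_{B}(fg_{(2)})\varepsilon_{B}(g_{(1)}h)$ collapse to $\langle\alpha_{A},h\cdot a\rangle=\varepsilon(h)\langle\alpha_{A},a\rangle$ and the dual-basis identity, exactly as for $A\#H$; and the remaining weak-bialgebra axiom $(\Delta_{B}\otimes\mathrm{id})\Delta_{B}(1_{B})=(\Delta_{B}(1_{B})\otimes1_{B})(1_{B}\otimes\Delta_{B}(1_{B}))$, together with its mirror, reduces to the idempotency of $\Delta_{B}(1_{B})$ plus a short $R$-matrix manipulation.

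Next I would identify the source and target counital maps — I expect $\varepsilon_{B,t}$ to land in a copy of $A$ and $\varepsilon_{B,s}$ in a copy of $A^{\mathrm{op}}$, paralleling $(A\#H)_{t}\cong A$ and $(A\#H)_{s}\cong A^{\mathrm{op}}$ — and then verify $S_{B}(b_{(1)})b_{(2)}=\varepsilon_{B,s}(b)$, $b_{(1)}S_{B}(b_{(2)})=\varepsilon_{B,t}(b)$, $S_{B}(b_{(1)})b_{(2)}S_{B}(b_{(3)})=S_{B}(b)$, and that $S_{B}$ is an anti-algebra and anti-coalgebra map. The crucial feature here is that, unlike in Theorem~\ref{Thm_WHA_stru_of_AsmH}, no hypothesis is imposed on the Drinfeld element $u=S(R^{2})R^{1}$: the extra twists $R_{1}^{2}\cdot(-)$ and $(-)\vartriangleleft R_{2}^{2}$ occurring in $\Delta_{B}$, $S_{B}$ and $R_{B}$ are exactly what make $B$ a weak Hopf algebra unconditionally, and the verification will repeatedly collapse products such as $R_{1}^{1}S(R_{2}^{1})\otimes(R_{1}^{2}R_{2}^{2}\cdot(-))$ using $u$-identities and the $R$-axioms. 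For quasi-triangularity I would then exhibit the evident candidate $\bar{R}_{B}$ for the weak inverse of $R_{B}$, obtained by suitably swapping the $H$-legs and inserting $S$, and check $\bar{R}_{B}R_{B}=\Delta_{B}(1_{B})$ and $R_{B}\bar{R}_{B}=\Delta_{B}^{\mathrm{cop}}(1_{B})$; verify the hexagon-type identities $(\Delta_{B}\otimes\mathrm{id})R_{B}=R_{B}^{13}R_{B}^{23}$ and $(\mathrm{id}\otimes\Delta_{B})R_{B}=R_{B}^{13}R_{B}^{12}$, which follow mechanically from $(\Delta\otimes\mathrm{id})(R)$ and $(\mathrm{id}\otimes\Delta)(R)$ as in Theorem~\ref{Thm_WHA_stru_of_AsmH}; and verify the intertwining relation $\Delta_{B}^{\mathrm{cop}}(b)R_{B}=R_{B}\Delta_{B}(b)$ for all $b\in B$.

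I expect the intertwining relation $\Delta_{B}^{\mathrm{cop}}(b)R_{B}=R_{B}\Delta_{B}(b)$ and the antipode identities to be the main obstacles, since both require using the quasi-cocommutativity $R\Delta(h)=\Delta^{\mathrm{cop}}(h)R$ of $H$ and the quantum commutativity of $A$ simultaneously, threaded through the $A\otimes A^{\ast}$ matrix-unit factor, and the bookkeeping of which copy of $R$ occupies which slot is delicate. A useful internal consistency check, and an alternative way to organize part of the argument, is that when $u$ acts trivially on $A$ the algebra inclusion $\Theta$ of Lemma~\ref{le:ThetaMap} — viewed, via $B\cong\operatorname{End}A^{\ast}\otimes H$, as a map $A\#H\to B$ — should intertwine $\widetilde{\Delta}$ with $\Delta_{B}$, $\widetilde{S}$ with $S_{B}$, and $\mathcal{R}$ with $R_{B}$, so that in that case the formulas for $B$ restrict to those of Theorem~\ref{Thm_WHA_stru_of_AsmH}; but since Proposition~\ref{prop_wha_stru_on_B} asserts the conclusion with no condition on $u$, the verifications must be run directly on $B$.
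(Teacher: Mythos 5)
Your proposal is a direct verification of all the weak bialgebra, antipode, and quasi-triangularity axioms, which is exactly the route the paper takes: it explicitly omits this ``fairly straightforward and tedious'' computation, so your outline is precisely the verification it declines to write out, and the ingredients you identify (Lemma~\ref{lem_properties_for_x&alpha}, the $R$-matrix axioms, quantum commutativity of $A$, and the identification $B\cong\operatorname{End}A^{\ast}\otimes H$) are the right ones. Your observation that, unlike in Theorem~\ref{Thm_WHA_stru_of_AsmH}, no hypothesis on the Drinfeld element is needed because the extra twists by $R$ are built into $\Delta_{B}$, $S_{B}$ and $R_{B}$ is also correct and consistent with the source/target subalgebras $B_{s}\cong A^{op}$, $B_{t}\cong A$ recorded after the proposition.
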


The proof of the proposition is fairly straightforward and tedious. We omit it here.

Note that the left and right multiplication on $A$ lead to both a right and a
left $A^{\ast}$-comodule structure on $A$. Apparently, by 2) of
Lemma~\ref{lem_properties_for_x&alpha} we have
\begin{align}
a_{\left\langle 0\right\rangle }\otimes a_{\left\langle 1\right\rangle }  &
={x}^{1}{a}\otimes x^{2}\rightharpoonup\alpha_{A},\label{right_A*-comod}\\
a_{\left\langle -1\right\rangle }\otimes a_{\left\langle 0\right\rangle }  &
=x^{2}\rightharpoonup\alpha_{A}\otimes a{x}^{1},\label{left_A*-comod}%
\end{align}
for all $a\in A$. With the given weak Hopf algebra structure on $B$, the
source counital subalgebra is
\begin{align}
B_{s}  & =\left\{  \left(  {R}^{2}\cdot a\right)  {x}^{1}\otimes{R}^{1}\otimes
x^{2}\rightharpoonup\alpha_{A}\mid a\in A\right\} \nonumber\\
& =\left\{  \left(  {R}^{2}\cdot a\right) _{\left\langle 0\right\rangle
}\otimes{R}^{1}\otimes\left(  {R}^{2}\cdot a\right) _{\left\langle
-1\right\rangle }\mid a\in A\right\}  \cong A^{op},\label{eq_Bs}%
\end{align}
and the target subalgebra is
\begin{align}
B_{t}  & =\left\{  {x}^{1}{a}\otimes1_{H}\otimes x^{2}\rightharpoonup
\alpha_{A}\mid a\in A\right\} \nonumber\\
& =\left\{  a_{\left\langle 0\right\rangle }\otimes1_{H}\otimes
a_{\left\langle 1\right\rangle }\mid a\in A\right\}  \cong A.\label{eq_Bt}%
\end{align}
Since $B_{t}$ is simple in $_{B}\mathcal{M}$, $_{B}\mathcal{M}$ is a braided
tensor category.

\begin{proposition}
As braided tensor categories, $_{B}\mathcal{M}$ is equivalent to
$_{H}\mathcal{M}$.

\begin{proof}
Note that $A^{\ast}\otimes H$ is a free right $H$-module and $B\cong
\operatorname{End}_{H}\left(  A^{\ast}\otimes H\right)  $. So the functor
$F=\left(  A^{\ast}\otimes H\right)  \otimes_{H}\bullet{}\cong A^{\ast}%
\otimes\bullet{}:{}_{H}\mathcal{M}\rightarrow{}_{B}\mathcal{M}$ is a category
equivalence. To complete the proof, it suffices to show that $F$ is a braided
tensor equivalence. $F\left(  M\right)  =A^{\ast}\otimes M\in{}_{B}%
\mathcal{M}$ is an $A$-bimodule via $a\cdot\left(  a^{\ast}\otimes m\right)
=a\rightharpoonup a^{\ast}\otimes m$ and $\left(  a^{\ast}\otimes m\right)
\cdot a=a^{\ast}\leftharpoonup\left(  {R}^{2}\cdot a\right)  \otimes{R}^{1}m $.

For $M,N\in${}${}{}{}_{H}\mathcal{M}$, define
\[
\xi_{M,N}:F\left(  M\otimes N\right)  \rightarrow F\left(  M\right)
\otimes_{A}F\left(  N\right)  ,\ \xi_{M,N}\left(  a^{\ast}\otimes\left(
m\otimes n\right)  \right)  =\left(  a^{\ast}\otimes m\right)  \otimes
_{A}\left(  \alpha_{A}\otimes n\right)  \text{.}%
\]
For all $a\in A$, $h\in H$, $a^{\ast},b^{\ast}\in A^{\ast}$, $m\in M$, $n\in
N$,
\begin{align*}
& \left(  a\otimes h\otimes a^{\ast}\right)  \cdot\xi_{M,N}\left(  b^{\ast
}\otimes\left(  m\otimes n\right)  \right) \\
& =\left(  a\otimes h\otimes a^{\ast}\right)  \cdot\left(  \left(  b^{\ast
}\otimes m\right)  \otimes_{A}\left(  \alpha_{A}\otimes n\right)  \right) \\
& =\left(  \left(  {R_{1}}^{2}\cdot x^{1}\right)  a\otimes{R_{2}}%
^{1}h_{\left(  1\right)  }{R_{1}}^{1}\otimes a_{\left(  2\right)  }^{\ast
}\right)  \cdot\left(  b^{\ast}\otimes m\right)  \otimes_{A}\left(
x^{2}\otimes h_{\left(  2\right)  }\otimes a_{\left(  1\right)  }^{\ast
}\vartriangleleft{R_{2}}^{2}\right)  \cdot\left(  \alpha_{A}\otimes n\right)
\\
& =\left\langle b^{\ast},\left(  {R_{1}}^{2}\cdot x^{1}\right)  a\right\rangle
\left\langle \alpha_{A},x^{2}\right\rangle \left(  a_{\left(  2\right)
}^{\ast}\otimes{R_{2}}^{1}h_{\left(  1\right)  }{R_{1}}^{1}m\right)
\otimes_{A}\left(  a_{\left(  1\right)  }^{\ast}\vartriangleleft{R_{2}}%
^{2}\otimes h_{\left(  2\right)  }n\right) \\
& =\left\langle b^{\ast},a\right\rangle \left(  a_{\left(  2\right)  }^{\ast
}\otimes{R_{2}}^{1}h_{\left(  1\right)  }m\right)  \otimes_{A}\left(
a_{\left(  1\right)  }^{\ast}\vartriangleleft R_{2}{}^{2}\otimes h_{\left(
2\right)  }n\right) \\
& =\left\langle b^{\ast},a\right\rangle \left(  a_{\left(  2\right)  }^{\ast
}\otimes{R_{2}}^{1}h_{\left(  1\right)  }m\right)  \otimes_{A}\left(
\left\langle a_{\left(  1\right)  }^{\ast}\vartriangleleft R_{2}{}^{2}%
,x^{1}\right\rangle x^{2}\rightharpoonup\alpha_{A}\otimes h_{\left(  2\right)
}n\right) \\
& =\left\langle b^{\ast},a\right\rangle \left\langle a_{\left(  1\right)
}^{\ast}\vartriangleleft R_{2}{}^{2},x^{1}\right\rangle \left(  a_{\left(
2\right)  }^{\ast}\otimes{R_{2}}^{1}h_{\left(  1\right)  }m\right)  \cdot
x^{2}\otimes_{A}\left(  \alpha_{A}\otimes h_{\left(  2\right)  }n\right)
\end{align*}%
\begin{align*}
& =\left\langle b^{\ast},a\right\rangle \left\langle a_{\left(  1\right)
}^{\ast},{R_{2}}^{2}\cdot x^{1}\right\rangle \left(  a_{\left(  2\right)
}^{\ast}\leftharpoonup\left(  {R_{1}}^{2}\cdot x^{2}\right)  \otimes{R_{1}%
}^{1}{R_{2}}^{1}h_{\left(  1\right)  }m\right)  \otimes_{A}\left(  \alpha
_{A}\otimes h_{\left(  2\right)  }n\right) \\
& =\xi_{M,N}\left(  \left(  a\otimes h\otimes a^{\ast}\right)  \cdot\left(
b^{\ast}\otimes\left(  m\otimes n\right)  \right)  \right)  .
\end{align*}
Thus, $\xi_{M,N}$ is an isomorphism of left $B$-modules, and it provides a
tensor structure for $F$. Let $c$, $\tilde{c}$ be the braidings for
$_{H}\mathcal{M}$ and $_{B}\mathcal{M}$, determined by $R$ and $R_{B}$,
respectively. One can check that
\[
\tilde{c}_{F\left(  M\right)  ,F\left(  N\right)  }\left(  \xi_{M,N}\left(
a^{\ast}\otimes m\otimes n\right)  \right)  =\xi_{M,N}\left(  a^{\ast}\otimes
c_{M,N}\left(  m\otimes n\right)  \right)  ,
\]
thus $F$ is a braided tensor equivalence.
\end{proof}
\end{proposition}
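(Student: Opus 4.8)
The plan is to exhibit a single explicit functor $F\colon {}_H\mathcal{M}\to {}_B\mathcal{M}$ and check in turn that it is (i) an equivalence of plain categories, (ii) monoidal, and (iii) compatible with the braidings determined by $R$ and $R_B$. For (i) I would argue by Morita theory. As $A$ is strongly separable it is finite-dimensional, so $A^{\ast}$ is a coalgebra and $A^{\ast}\otimes H$ is a free right $H$-module of rank $\dim A$; unwinding the multiplication of $B=A\otimes H\otimes A^{\ast}$ identifies $B\cong\operatorname{End}_H(A^{\ast}\otimes H)$ as algebras, with $A^{\ast}\otimes H$ a $(B,H)$-bimodule that is an $H$-progenerator. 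Hence $F=(A^{\ast}\otimes H)\otimes_H(-)\cong A^{\ast}\otimes(-)$ is an equivalence ${}_H\mathcal{M}\simeq{}_B\mathcal{M}$. Since $B_t\cong A$ sits inside $B$, every object of ${}_B\mathcal{M}$ is automatically an $A$-bimodule and the monoidal product of ${}_B\mathcal{M}$ is $\otimes_A$ with unit $B_t$; on $F(M)=A^{\ast}\otimes M$ this structure reads $a\cdot(a^{\ast}\otimes m)=(a\rightharpoonup a^{\ast})\otimes m$ and $(a^{\ast}\otimes m)\cdot a=(a^{\ast}\leftharpoonup(R^2\cdot a))\otimes R^1m$, read off from the action of $B$ and the descriptions of $B_t$ and $B_s$.

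For (ii), I would define $\xi_{M,N}\colon F(M\otimes N)\to F(M)\otimes_A F(N)$ by $\xi_{M,N}\bigl(a^{\ast}\otimes(m\otimes n)\bigr)=(a^{\ast}\otimes m)\otimes_A(\alpha_A\otimes n)$, with inverse the analogous map built from the dual basis $x^1\otimes(x^2\rightharpoonup\alpha_A)$ of Lemma~\ref{lem_properties_for_x&alpha}; well-definedness over $A$ and mutual inverseness are immediate from the dual-basis identities there. The substantive point is that $\xi_{M,N}$ is a morphism of left $B$-modules: one expands $\Delta_B(a\otimes h\otimes a^{\ast})$ against the two bimodule actions and collapses both sides using quantum commutativity of $A$, part 3) of Lemma~\ref{lem_properties_for_x&alpha} (which removes the $h$-legs landing on $x$ and $\alpha_A$), and the dual-basis relations. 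Naturality in $M$ and $N$ is clear, and the pentagon and triangle coherences for $(F,\xi)$ reduce to the coassociativity and counit axioms of $\Delta_B$ with $B_t$ as the monoidal unit.

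For (iii), writing $c$ and $\tilde c$ for the braidings of ${}_H\mathcal{M}$ and ${}_B\mathcal{M}$ coming from $R$ and $R_B$, one checks $\tilde c_{F(M),F(N)}\circ\xi_{M,N}=\xi_{M,N}\circ F(c_{M,N})$ by evaluating on the dual basis: in the formula for $R_B$ the $x_i$- and $\alpha_A$-legs cancel, leaving exactly the plain $R$-matrix acting on $M\otimes N$. The main obstacle is purely computational — tracking the many Sweedler indices of $\Delta_B$ and $R_B$, the legs of the separability idempotent, and the left/right $A^{\ast}$-comodule structures on $A$ — and no ingredient beyond the Morita equivalence together with the identities already collected in Lemma~\ref{lem_properties_for_x&alpha} and Proposition~\ref{prop_wha_stru_on_B} is needed; the equivalence of underlying categories is formal, and all the genuine content lies in certifying that $\xi$ intertwines both the $B$-actions and the two braidings.
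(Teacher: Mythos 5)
Your proposal is correct and follows essentially the same route as the paper: the Morita identification $B\cong\operatorname{End}_H(A^{\ast}\otimes H)$ giving the equivalence $F=A^{\ast}\otimes(-)$, the same tensor structure $\xi_{M,N}(a^{\ast}\otimes(m\otimes n))=(a^{\ast}\otimes m)\otimes_A(\alpha_A\otimes n)$, and the same two verifications ($B$-linearity of $\xi$ and compatibility with the braidings) carried out via quantum commutativity and the dual-basis identities of Lemma~\ref{lem_properties_for_x&alpha}. The only addition beyond the paper's argument is your explicit description of the inverse of $\xi_{M,N}$, which is a harmless refinement.
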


Note that the map (\ref{ThetaMap}) provides an algebra embedding
$\varphi:A\#H\rightarrow B$ given by $a\#h\mapsto S\left(  h_{\left(
1\right)  }\right)  \cdot a_{\left\langle 0\right\rangle }\otimes h_{\left(
2\right)  }\otimes a_{\left\langle 1\right\rangle }$.

\begin{proposition}
\label{Prop_AsmaH_is_subWHA}If the Drinfeld element $u$ of $\left(
H,R\right)  $ acts trivially on $A$, then $\varphi:A\#H\rightarrow B$ is a
weak Hopf algebra monomorphism.
\end{proposition}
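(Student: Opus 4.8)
The plan is to use the fact, recorded just before the statement, that $\varphi$ is already an algebra embedding, and to upgrade it to a morphism of weak Hopf algebras. Since $A\#H$ and $B$ are weak Hopf algebras (Theorem~\ref{Thm_WHA_stru_of_AsmH} --- this is where the triviality of $u$ on $A$ is used --- and Proposition~\ref{prop_wha_stru_on_B}) and $\varphi$ is a unital algebra map, it suffices to show that $\varphi$ respects the comultiplications and counits, i.e. that $\varepsilon_B\circ\varphi=\tilde\varepsilon$ and $\Delta_B\circ\varphi=(\varphi\otimes\varphi)\circ\tilde\Delta$; compatibility with the antipode is then automatic. Indeed, a morphism of weak bialgebras is unital, hence intertwines the source and target counital maps $\varepsilon_s,\varepsilon_t$, and from the antipode axioms (in particular the identity $S(y_{(1)})\varepsilon_t(y_{(2)})=S(y)$) one deduces $\varphi\circ\tilde S=S_B\circ\varphi$; alternatively one invokes the uniqueness of the antipode of a weak Hopf algebra.

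For the counit, write $\varphi(a\#h)=S(h_{(1)})\cdot(x^{1}a)\otimes h_{(2)}\otimes x^{2}\rightharpoonup\alpha_A$ using the coaction (\ref{right_A*-comod}). Applying $\varepsilon_B$, the factor $\varepsilon(h_{(2)})$ turns $S(h_{(1)})$ into $S(h)$, item 3 of Lemma~\ref{lem_properties_for_x&alpha} collapses the $S(h)$-action to a factor $\varepsilon(h)$, and $\langle x^{2}\rightharpoonup\alpha_A,1_A\rangle=\langle\alpha_A,x^{2}\rangle$; this reduces the claim to $\langle\alpha_A,x^{1}a\rangle\langle\alpha_A,x^{2}\rangle=\langle\alpha_A,a\rangle$, which follows by applying the dual-basis identity of item 1 of Lemma~\ref{lem_properties_for_x&alpha} to $a\rightharpoonup\alpha_A$ and evaluating at $1_A$. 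Hence $\varepsilon_B\circ\varphi=\tilde\varepsilon$.

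The comultiplication identity is the substantial part. Both $\Delta_B\circ\varphi$ and $(\varphi\otimes\varphi)\circ\tilde\Delta$ are algebra maps $A\#H\to B\otimes B$ (as $\tilde\Delta$ --- by Proposition~\ref{prop_wba_stru} --- $\Delta_B$ and $\varphi$ are all algebra maps), so it suffices to compare them on the algebra generators $a\#1_H$ and $1_A\#h$ of $A\#H$. On each generator I would expand the left side via the formula for $\Delta_B$ in Proposition~\ref{prop_wha_stru_on_B}, the coaction (\ref{right_A*-comod}), and the coalgebra structure of $A^{\ast}$, using in particular $\Delta_{A^{\ast}}(x^{2}\rightharpoonup\alpha_A)=\sum\big((y^{1}x^{2})\rightharpoonup\alpha_A\big)\otimes(y^{2}\rightharpoonup\alpha_A)$ (immediate from dualizing the product of $A$ together with the dual basis), and expand the right side via (\ref{eq:deltaofAsmashH}), the formula for $\varphi$, and the quasi-triangularity axioms $(\Delta\otimes id)(R)$ and $(id\otimes\Delta)(R)$ to distribute the legs of $R$. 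Reconciling the two expressions then uses quantum commutativity of $A$, the hexagon identities for $R$, the identity $h_{(1)}\cdot x^{1}\otimes h_{(2)}\cdot x^{2}=\varepsilon(h)\,x^{1}\otimes x^{2}$ (from item 3 of Lemma~\ref{lem_properties_for_x&alpha}), and item 2 of Lemma~\ref{lem_properties_for_x&alpha} ($H$ being semisimple and involutory).

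I expect this last verification to be the only genuine obstacle: it is conceptually routine but involves careful bookkeeping of the several independent copies of the symmetric separability idempotent and of how the legs of $R$ and of $\Delta_H$ are distributed across the three tensor factors of $B=A\otimes H\otimes A^{\ast}$. Once the comultiplication and counit identities are in place, $\varphi$ is a morphism of weak bialgebras, hence of weak Hopf algebras by the first paragraph, and being injective it is a weak Hopf algebra monomorphism.
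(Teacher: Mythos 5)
Your strategy coincides with the paper's: the entire written proof in the paper is the single verification that $\Delta_{B}\circ\varphi=\left(  \varphi\otimes\varphi\right)  \circ\tilde{\Delta}$ (the counit compatibility and the antipode compatibility are not even recorded there, and your counit computation and your appeal to uniqueness of the antipode for upgrading a weak bialgebra map to a weak Hopf algebra map are both correct). Your reduction to the generators $a\#1_{H}$ and $1_{A}\#h$ is also legitimate, since $\tilde{\Delta}$, $\Delta_{B}$ and $\varphi$ are all multiplicative and $a\#h=\left(  a\#1_{H}\right)  \left(  1_{A}\#h\right)  $, and the ingredients you list (the two quasi-triangularity axioms to distribute the legs of $R$ over $\Delta$, quantum commutativity, $h_{\left(  1\right)  }\cdot x^{1}\otimes h_{\left(  2\right)  }\cdot x^{2}=\varepsilon\left(  h\right)  x^{1}\otimes x^{2}$, and the identities of Lemma~\ref{lem_properties_for_x&alpha}) are exactly the ones the paper's chain of equalities uses.

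The weakness is that you never carry out that verification: you defer it as ``routine bookkeeping,'' but it is the only non-formal content of the proposition --- everything else in your write-up is soft. Passing to generators does not materially shorten it either; each of the two generator cases already requires commuting $a_{\left\langle 0\right\rangle }$ past terms of the form $R^{2}\cdot x^{1}$ via quantum commutativity and absorbing a factor $S\left(  R^{1}\right)  R^{2}=S\left(  u\right)  =u$ against the hypothesis that $u$ acts trivially on $A$. This last point also means you have mislocated where the hypothesis on $u$ enters: it is needed not only for Theorem~\ref{Thm_WHA_stru_of_AsmH} to furnish the antipode of $A\#H$, but inside the comultiplication computation itself (it is what lets the pair $S\left(  R_{1}{}^{1}\right)  \otimes R_{1}{}^{2}$ produced by $\left(  \Delta\otimes id\right)  \left(  R\right)  $ collapse when it lands on $A$). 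Since the paper proves the identity on a general element $a\#h$, your generator cases are guaranteed to close up, so no step of your plan would fail; but as it stands the proposal is an accurate outline of the paper's proof rather than a proof, and you should write out the computation.
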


\begin{proof}
For any $a\in A$, $h\in H$,
\begin{align*}
& \left(  \varphi\otimes\varphi\right)  \tilde{\Delta}\left(  a\#h\right) \\
& =\varphi\left(  a\left(  R_{1}{}^{2}\cdot x^{1}\right)  \otimes R_{1}{}%
^{1}h_{\left(  1\right)  }\right)  \otimes\varphi\left(  x^{2}\otimes
h_{\left(  2\right)  }\right) \\
& =S\left(  R_{1}{}^{1}h_{\left(  1\right)  }\right)  \cdot\left(
a_{\left\langle 0\right\rangle }\left(  \left(  R_{1}{}^{2}R_{2}{}^{2}\right)
\cdot{x}^{1}\right)  \right)  \otimes R_{2}{}^{1}h_{\left(  2\right)  }\otimes
a_{\left\langle 1\right\rangle }\otimes\varphi\left(  x^{2}\otimes h_{\left(
3\right)  }\right) \\
& =S\left(  h_{\left(  1\right)  }\right)  \cdot\left(  \left(  S\left(
R_{3}{}^{1}\right)  \cdot a_{\left\langle 0\right\rangle }\right)  \left(
\left(  S\left(  R_{1}{}^{1}\right)  R_{1}{}^{2}R_{3}{}^{2}R_{2}{}^{2}\right)
\cdot{x}^{1}\right)  \right)  \otimes R_{2}{}^{1}h_{\left(  2\right)  }\otimes
a_{\left\langle 1\right\rangle }\otimes\varphi\left(  x^{2}\otimes h_{\left(
3\right)  }\right) \\
& =S\left(  h_{\left(  1\right)  }\right)  \cdot\left(  \left(  R_{2}{}%
^{2}\cdot{x}^{1}\right)  a_{\left\langle 0\right\rangle }\right)  \otimes
R_{2}{}^{1}h_{\left(  2\right)  }\otimes a_{\left\langle 1\right\rangle
}\otimes\varphi\left(  x^{2}\otimes h_{\left(  3\right)  }\right) \\
& =S\left(  h_{\left(  1\right)  }\right)  \cdot\left(  \left(  R_{2}{}%
^{2}\cdot\left(  {x_{1}}^{1}{x_{2}}^{1}\right)  \right)  a_{\left\langle
0\right\rangle }\right)  \otimes R_{2}{}^{1}h_{\left(  2\right)  }\otimes
a_{\left\langle 1\right\rangle }\otimes S\left(  h_{\left(  3\right)
}\right)  \cdot x{_{1}}^{2}\otimes h_{\left(  4\right)  }\otimes{x_{2}}%
^{2}\rightharpoonup\alpha_{A}\\
& =S\left(  h_{\left(  1\right)  }\right)  \cdot\left(  \left(  \left(
R_{1}{}^{2}h_{\left(  3\right)  }\right)  \cdot{x_{1}}^{1}\right)  \left(
R_{2}{}^{2}\cdot{x_{2}}^{1}\right)  a_{\left\langle 0\right\rangle }\right)
\otimes R_{2}{}^{1}R_{1}{}^{1}h_{\left(  2\right)  }\otimes a_{\left\langle
1\right\rangle }\otimes x{_{1}}^{2}\otimes h_{\left(  4\right)  }\otimes
{x_{2}}^{2}\rightharpoonup\alpha_{A}\\
& =\left(  R_{1}{}^{2}\cdot{x_{1}}^{1}\right)  \left(  S\left(  h_{\left(
1\right)  }\right)  \cdot\left(  {x_{2}}^{1}a_{\left\langle 0\right\rangle
}\right)  \right)  \otimes R_{2}{}^{1}R_{4}{}^{1}h_{\left(  2\right)  }R_{1}%
{}^{1}\otimes a_{\left\langle 1\right\rangle }\\
& \otimes x{_{1}}^{2}\otimes h_{\left(  3\right)  }\otimes\left(  {x_{2}}%
^{2}\rightharpoonup\left(  \alpha_{A}\vartriangleleft S\left(  R_{4}{}%
^{2}\right)  \right)  \right)  \vartriangleleft R_{2}{}^{2}\\
& =\left(  R_{1}{}^{2}\cdot{x_{1}}^{1}\right)  \left(  S\left(  h_{\left(
1\right)  }\right)  \cdot\left(  {x_{2}}^{1}a_{\left\langle 0\right\rangle
}\right)  \right)  \otimes R_{2}{}^{1}h_{\left(  2\right)  }R_{1}{}^{1}\otimes
a_{\left\langle 1\right\rangle }\otimes x{_{1}}^{2}\otimes h_{\left(
3\right)  }\otimes\left(  {x_{2}}^{2}\rightharpoonup\alpha_{A}\right)
\vartriangleleft R_{2}{}^{2}\\
& =\left(  R_{1}{}^{2}\cdot{x}^{1}\right)  \left(  S\left(  h_{\left(
1\right)  }\right)  \cdot a_{\left\langle 0\right\rangle }\right)  \otimes
R_{2}{}^{1}h_{\left(  2\right)  }R_{1}{}^{1}\otimes a_{\left\langle
2\right\rangle }\otimes x^{2}\otimes h_{\left(  3\right)  }\otimes
a_{\left\langle 1\right\rangle }\vartriangleleft R_{2}{}^{2}\\
& =\Delta_{B}\left(  S\left(  h_{\left(  1\right)  }\right)  \cdot
a_{\left\langle 0\right\rangle }\otimes h_{\left(  2\right)  }\otimes
a_{\left\langle 1\right\rangle }\right)  =\Delta_{B}\left(  \varphi\left(
a\#h\right)  \right)  .
\end{align*}
This completes the proof.
\end{proof}

\begin{remark}
With the right $A\#H$-module structure on $A^{\ast}\otimes H$ defined by
\[
\left(  a^{\ast}\otimes h\right)  \left(  a\#f\right)  =a^{\ast}%
\leftharpoonup\left(  h_{\left(  1\right)  }\cdot a\right)  \otimes h_{\left(
2\right)  }f,\ \text{for any }a^{\ast}\in A^{\ast},\ a\in A,\ h,g\in H,
\]
the map $\varphi$ is the composition of maps in the following diagram.
\begin{equation*}%
\def\mleftdelim{.}\def\mrightdelim{.}\def\mrowsep{1cm}\def\mcolumnsep{2cm}%
\begin{tikzpicture}[scale=1,samples=40,baseline]\matrix (m) [matrix of math nodes,left delimiter={\mleftdelim},right delimiter={\mrightdelim},row sep=\mrowsep,column sep=\mcolumnsep]{ |[name=A]|A\#H & |[name=B]|\End\left( A^{\ast }\right) \otimes H\cong B \\ |[name=C]|\End_{A\#H}\left( A^{\ast }\otimes H\right) & |[name=D]|\End_{H}\left( A^{\ast }\otimes H\right) \\};\begin{scope}[every node/.style={midway,auto,font=\scriptsize}] \draw (A) edge[->] node[above] {$\varphi$} (B) edge[->] node[left] {$\cong$} (C) (B) edge[<-] node[right] {$\cong $} (D) (C) edge[to reversed-to] node[above] {}(D); \end{scope} 
\end{tikzpicture}
\end{equation*}

The image of $\varphi$ is actually
\[
\operatorname{Im}\varphi=\left\{  \sum_{i}a_{i}\otimes h_{i}\otimes
a_{i}^{\ast}\in B\mid\sum_{i}aa_{i}\otimes h_{i}\otimes a_{i}^{\ast}=\sum
_{i}a_{i}\otimes\left(  h_{i}\otimes a_{i}^{\ast}\right)  \blacktriangleleft
a,\forall a\in A\right\}  ,
\]
where $\left(  h\otimes a^{\ast}\right)  \blacktriangleleft a=h_{\left(
2\right)  }\otimes a^{\ast}\leftharpoonup\left(  h_{\left(  1\right)  }\cdot
a\right)  $.

\end{remark}
\begin{proposition}
Let $R_{B}$ be the R-matrix of $B$ as in Proposition~\ref{prop_wha_stru_on_B}.
Then $R_{B}\in\operatorname{Im}\varphi\otimes\operatorname{Im}\varphi$ if and
only if $A$ is in the M\"{u}ger center of {}$_{H}\mathcal{M}$.

Moreover, if $A$ is in the M\"{u}ger center of {}$_{H}\mathcal{M}$, then
$\varphi:A\#H\rightarrow B$ is a quasi-triangular weak Hopf algebra map.
\end{proposition}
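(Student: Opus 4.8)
The plan is to handle the ``moreover'' assertion and the ``if'' direction of the equivalence together, by showing that $(\varphi\otimes\varphi)(\mathcal{R})=R_{B}$, where $\mathcal{R}$ is the R-matrix of $A\#H$ furnished by Theorem~\ref{Thm_WHA_stru_of_AsmH}. First one observes that the Müger-center hypothesis, in the equivalent form (\ref{EqEquivToAinMCenter}) and together with $S^{2}=\operatorname{id}_{H}$ and the strong separability of $A$, forces the Drinfeld element $u$ to act trivially on $A$; hence $\varphi$ is a weak Hopf algebra monomorphism by Proposition~\ref{Prop_AsmaH_is_subWHA}, and $(A\#H,\mathcal{R})$ is quasi-triangular by Theorem~\ref{Thm_WHA_stru_of_AsmH}, with $\mathcal{R}$ given by the simplified expression (\ref{SimplifyR}). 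Since a morphism of weak Hopf algebras that carries an R-matrix to an R-matrix is by definition a morphism of quasi-triangular weak Hopf algebras, the identity $(\varphi\otimes\varphi)(\mathcal{R})=R_{B}$ yields both the ``moreover'' statement and, because $\mathcal{R}\in(A\#H)\otimes(A\#H)$, the inclusion $R_{B}=(\varphi\otimes\varphi)(\mathcal{R})\in\operatorname{Im}\varphi\otimes\operatorname{Im}\varphi$.

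To prove $(\varphi\otimes\varphi)(\mathcal{R})=R_{B}$ I would compute directly: substitute the formula $\varphi(a\#h)=S(h_{(1)})\cdot a_{\langle0\rangle}\otimes h_{(2)}\otimes a_{\langle1\rangle}$ together with the comodule formula (\ref{right_A*-comod}) into the simplified $\mathcal{R}$ of (\ref{SimplifyR}), expand $\tilde{1}_{(1)}\otimes\tilde{1}_{(2)}=R^{2}\cdot x^{1}\#R^{1}\otimes x^{2}\#1_{H}$, and then collapse the result to the expression for $R_{B}$ in Proposition~\ref{prop_wha_stru_on_B}. The simplifications needed are: item~2) of Lemma~\ref{lem_properties_for_x&alpha} to pass $H$-actions across the separability idempotent; item~1) of the same lemma to recognize the dual-basis factor $x^{2}\rightharpoonup\alpha_{A}$ and, after a further use of the coproduct of $H$, the comultiplication $\alpha_{A(1)}\otimes\alpha_{A(2)}$ of $\alpha_{A}$ in $A^{\ast}$; quantum commutativity together with (\ref{eq1.1}) and (\ref{eq1.2}) to reorder the $A$-factors; and finally the Müger relation (\ref{EqEquivToAinMCenter}) to put the legs of $R$ in the exact positions they occupy in $R_{B}$.

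For the ``only if'' direction, assume $R_{B}\in\operatorname{Im}\varphi\otimes\operatorname{Im}\varphi$ and derive the Müger relation. I would use the description of $\operatorname{Im}\varphi$ in the Remark: an element $\sum_{i}a_{i}\otimes h_{i}\otimes a_{i}^{\ast}$ of $B$ lies in $\operatorname{Im}\varphi$ precisely when $\sum_{i}aa_{i}\otimes h_{i}\otimes a_{i}^{\ast}=\sum_{i}a_{i}\otimes h_{i(2)}\otimes a_{i}^{\ast}\leftharpoonup(h_{i(1)}\cdot a)$ for every $a\in A$. Membership of $R_{B}$ in $\operatorname{Im}\varphi\otimes\operatorname{Im}\varphi$ is equivalent to the statement that $(\operatorname{id}_{B}\otimes\lambda)(R_{B})$ and $(\lambda\otimes\operatorname{id}_{B})(R_{B})$ lie in $\operatorname{Im}\varphi$ for every $\lambda\in B^{\ast}$. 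Applying this with $\lambda=\varepsilon_{B}$ and with the $\lambda$ obtained by evaluating the $A^{\ast}$-leg against $1_{A}$ and against the dual basis $x^{1}\otimes(x^{2}\rightharpoonup\alpha_{A})$ of Lemma~\ref{lem_properties_for_x&alpha}(1), then cancelling the separability idempotents, I expect the membership criterion above to reduce, on the explicit $R_{B}$ of Proposition~\ref{prop_wha_stru_on_B}, to exactly the identity $(R_{2}{}^{2}R_{1}{}^{1})\cdot a\otimes R_{2}{}^{1}R_{1}{}^{2}=a\otimes1_{H}$, i.e.\ to $A$ lying in the Müger center of ${}_{H}\mathcal{M}$.

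The forward identity $(\varphi\otimes\varphi)(\mathcal{R})=R_{B}$ is long but mechanical, so the real obstacle is the converse: the hypothesis $R_{B}\in\operatorname{Im}\varphi\otimes\operatorname{Im}\varphi$ is on its face only a membership statement in a tensor product of subspaces, and the work is to turn it into usable component identities and then to choose the evaluations — the pairings against $1_{A}$ and against the dual basis — so that the Müger relation is recovered \emph{exactly}, rather than merely a formal consequence of it. Keeping track of which copy of $R$ and of $x$ each tensor factor belongs to while collapsing the comultiplication of $\alpha_{A}$ is where I expect the bookkeeping to be heaviest.
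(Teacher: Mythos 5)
Your ``only if'' direction is essentially the paper's argument: the paper also uses the membership criterion from the Remark (an element $\sum_i a_i\otimes h_i\otimes a_i^{\ast}$ lies in $\operatorname{Im}\varphi$ iff $\sum_i aa_i\otimes h_i\otimes a_i^{\ast}=\sum_i a_i\otimes (h_i\otimes a_i^{\ast})\blacktriangleleft a$ for all $a$), reads off the resulting component identity on the first leg of $R_B$, and extracts the M\"{u}ger relation by applying the evaluation $id\otimes\varepsilon\otimes 1_A\otimes\alpha_A\otimes id\otimes 1_A$. For the ``if'' direction, however, you take a genuinely different route: the paper never produces a preimage of $R_B$ under $\varphi\otimes\varphi$; it shows \emph{unconditionally} that the second leg of $R_B$ satisfies the membership criterion (so $R_B\in B\otimes\operatorname{Im}\varphi$ always), and then shows that under the M\"{u}ger hypothesis the first leg does too, concluding $R_B\in(\operatorname{Im}\varphi\otimes B)\cap(B\otimes\operatorname{Im}\varphi)=\operatorname{Im}\varphi\otimes\operatorname{Im}\varphi$. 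Your plan --- proving $(\varphi\otimes\varphi)(\mathcal{R})=R_B$ --- is more ambitious and, if carried out, would also deliver the ``moreover'' clause, which the paper's written proof leaves implicit.

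There are two genuine gaps. First, the identity $(\varphi\otimes\varphi)(\mathcal{R})=R_B$ is the load-bearing step of your whole ``if'' direction and of the ``moreover'' statement, and you do not verify it; it is not automatic from both elements being R-matrices (R-matrices on a weak Hopf algebra are far from unique, so knowing that $\varphi$ is a weak Hopf algebra monomorphism and that $\mathcal{R}$, $R_B$ are both R-matrices does not force $\varphi\otimes\varphi$ to match them up). Until that computation is actually performed --- and it is of the same order of difficulty as the two long displayed computations in the paper's proof --- your argument establishes neither direction of the claimed equivalence nor the ``moreover''. Second, your opening assertion that the M\"{u}ger-center hypothesis ``forces the Drinfeld element $u$ to act trivially on $A$'' is unjustified and, as far as I can see, not a consequence of (\ref{EqEquivToAinMCenter}): applying $b\otimes h\mapsto S(h)\cdot b$ to both sides of (\ref{EqEquivToAinMCenter}) gives $u\cdot a=(R^1R^2)\cdot a$, which is a tautology rather than $u\cdot a=a$ (already for a triangular $(H,R)$ every module is in the M\"{u}ger center while $u$ need not act trivially). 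You need $u$ to act trivially both to invoke Theorem~\ref{Thm_WHA_stru_of_AsmH} (so that $\mathcal{R}$ exists at all) and Proposition~\ref{Prop_AsmaH_is_subWHA} (so that $\varphi$ is a weak Hopf algebra map); the correct reading is that this is a standing hypothesis carried over from those results, not something you may derive from M\"{u}ger-centrality. Either prove the implication or state the hypothesis; do not assert it in passing.
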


\begin{proof}
First, we show that $R_{B}\subseteq B\otimes\operatorname{Im}\varphi$. Since
for any $a\in A$,%
\begin{align*}
& \left(  {R_{3}}^{2}\cdot{x_{2}}^{1}\right)  {x_{1}}^{1}\otimes{R_{2}}%
^{1}{R_{3}}^{1}\otimes\alpha_{A\left(  1\right)  }\vartriangleleft{R_{1}}%
^{2}\otimes{x_{2}}^{2}\otimes\left(  {R_{1}}^{1}{R_{2}}^{2}\otimes x{_{1}}%
^{2}\rightharpoonup\alpha_{A\left(  2\right)  }\right)  \blacktriangleleft a\\
& =\left(  {R_{3}}^{2}\cdot{x_{2}}^{1}\right)  {x_{1}}^{1}\otimes{R_{2}}%
^{1}{R_{5}}^{1}{R_{3}}^{1}\otimes\alpha_{A\left(  1\right)  }\vartriangleleft
\left(  {R_{1}}^{2}{R_{4}}^{2}\right) \\
&  \otimes{x_{2}}^{2}\otimes{R_{4}}^{1}{R_{2}}^{2}\otimes x{_{1}}^{2}
\rightharpoonup\alpha_{A\left(  2\right)  }\leftharpoonup\left(  \left(
{R_{1}}^{1}{R_{5}}^{2}\right)  \cdot a\right) \\
& =\left(  {R_{3}}^{2}\cdot{x_{2}}^{1}\right)  {x_{1}}^{1}\otimes{R_{2}}%
^{1}{R_{5}}^{1}{R_{3}}^{1}\otimes\alpha_{A\left(  1\right)  }\vartriangleleft
\left(  {R_{1}}^{2}{R_{4}}^{2}\right) \\
& \otimes{x_{2}}^{2}\otimes{R_{4}}^{1}{R_{2}}^{2}\otimes\left\langle
\alpha_{A\left(  2\right)  }\vartriangleleft{R_{1}}^{1},{R_{5}}^{2}\cdot
a\right\rangle x{_{1}}^{2}\rightharpoonup\alpha_{A\left(  3\right)  }\\
& =\left(  {R_{3}}^{2}\cdot{x_{2}}^{1}\right)  {x_{1}}^{1}\otimes{R_{2}}%
^{1}{R_{5}}^{1}{R_{3}}^{1}\otimes\alpha_{A\left(  3\right)  }\vartriangleleft
{R_{4}}^{2}\\
& \otimes{x_{2}}^{2}\otimes{R_{4}}^{1}{R_{2}}^{2}\otimes\left\langle
\alpha_{A\left(  2\right)  },{R_{5}}^{2}\cdot a\right\rangle x{_{1}}%
^{2}\rightharpoonup\alpha_{A\left(  1\right)  }\\
& ={x_{1}}^{1}\otimes{R_{2}}^{1}{R_{5}}^{1}{R_{3}}^{1}\otimes\alpha_{A\left(
1\right)  }\vartriangleleft{R_{4}}^{2}\\
& \otimes{x_{2}}^{2}\otimes{R_{4}}^{1}{R_{2}}^{2}\otimes x{_{1}}^{2}\left(
{R_{3}}^{2}\cdot{x_{2}}^{1}\right)  \left(  {R_{5}}^{2}\cdot a\right)
\rightharpoonup\alpha_{A\left(  2\right)  }\\
& ={x_{1}}^{1}\otimes{R_{2}}^{1}{R_{3}}^{1}\otimes\alpha_{A\left(  1\right)
}\vartriangleleft{R_{4}}^{2}\otimes{x_{2}}^{2}\otimes{R_{4}}^{1}{R_{2}}%
^{2}\otimes x{_{1}}^{2}\left(  {R_{3}}^{2}\cdot\left(  {x_{2}}^{1}a\right)
\right)  \rightharpoonup\alpha_{A\left(  2\right)  }\\
& =\left(  {R_{3}}^{2}\cdot{x_{2}}^{1}\right)  {x_{1}}^{1}\otimes{R_{2}}%
^{1}{R_{3}}^{1}\otimes\alpha_{A\left(  1\right)  }\vartriangleleft{R_{1}}%
^{2}\otimes a{x_{2}}^{2}\otimes{R_{1}}^{1}{R_{2}}^{2}\otimes x{_{1}}%
^{2}\rightharpoonup\alpha_{A\left(  2\right)  }.
\end{align*}
If for any $a\in A$, $\left(  {R_{1}}^{2}{R_{2}}^{1}\right)  \cdot
a\otimes{R_{1}}^{1}{R_{2}}^{2}=a\otimes1_{H}$, then
\begin{align*}
& \left(  {R_{3}}^{2}\cdot{x_{2}}^{1}\right)  {x_{1}}^{1}\otimes\left(
{R_{2}}^{1}{R_{3}}^{1}\otimes\alpha_{A\left(  1\right)  }\vartriangleleft
{R_{1}}^{2}\right)  \blacktriangleleft a\otimes{x_{2}}^{2}\otimes{R_{1}}%
^{1}{R_{2}}^{2}\otimes x{_{1}}^{2}\rightharpoonup\alpha_{A\left(  2\right)
}\\
& =\left(  {R_{3}}^{2}\cdot{x_{2}}^{1}\right)  {x_{1}}^{1}\otimes\left(
{R_{2}}^{1}{R_{3}}^{1}\right) _{\left(  2\right)  }\otimes\left(
\alpha_{A\left(  1\right)  }\vartriangleleft{R_{1}}^{2}\right)  \leftharpoonup
\left(  \left(  {R_{2}}^{1}{R_{3}}^{1}\right) _{\left(  1\right)  }\cdot
a\right) \\
& \otimes{x_{2}}^{2}\otimes{R_{1}}^{1}{R_{2}}^{2}\otimes x{_{1}}%
^{2}\rightharpoonup\alpha_{A\left(  2\right)  }%
\end{align*}
\begin{align*}
& =\left(  \left(  {R_{3}}^{2}{R_{5}}^{2}\right)  \cdot{x_{2}}^{1}\right)
{x_{1}}^{1}\otimes\left(  {R_{4}}^{1}{R_{5}}^{1}\otimes\left(  \alpha
_{A\left(  1\right)  }\vartriangleleft{R_{1}}^{2}\right)  \leftharpoonup
\left(  \left(  {R_{2}}^{1}{R_{3}}^{1}\right)  \cdot a\right)  \right) \\
& \otimes{x_{2}}^{2}\otimes{R_{1}}^{1}{R_{2}}^{2}{R_{4}}^{2}\otimes x{_{1}%
}^{2}\rightharpoonup\alpha_{A\left(  2\right)  }\\
& =\left(  \left(  {R_{3}}^{2}{R_{5}}^{2}\right)  \cdot{x_{2}}^{1}\right)
{x_{1}}^{1}\otimes{R_{4}}^{1}{R_{5}}^{1}\otimes\left\langle \alpha_{A\left(
3\right)  },\left(  \left(  {R_{6}}^{2}{R_{2}}^{1}{R_{3}}^{1}\right)  \cdot
a\right)  \right\rangle \alpha_{A\left(  1\right)  }\vartriangleleft{R_{1}%
}^{2}\\
& \otimes{x_{2}}^{2}\otimes{R_{1}}^{1}{R_{6}}^{1}{R_{2}}^{2}{R_{4}}^{2}\otimes
x{_{1}}^{2}\rightharpoonup\alpha_{A\left(  2\right)  }\\
& =a\left(  {R_{5}}^{2}\cdot{x_{2}}^{1}\right)  {x_{1}}^{1}\otimes{R_{4}}%
^{1}{R_{5}}^{1}\otimes\alpha_{A\left(  1\right)  }\vartriangleleft{R_{1}}%
^{2}\otimes{x_{2}}^{2}\otimes{R_{1}}^{1}{R_{4}}^{2}\otimes x{_{1}}%
^{2}\rightharpoonup\alpha_{A\left(  2\right)  },
\end{align*}
so $R_{B}\in\operatorname{Im}\varphi\otimes B$, and thus $R_{B}\in
\operatorname{Im}\varphi\otimes\operatorname{Im}\varphi$.

Conversely, suppose that $R_{B}\in\operatorname{Im}\varphi\otimes
\operatorname{Im}\varphi$, then for any $a\in A$,
\begin{align*}
& \left\langle \alpha_{A\left(  3\right)  },\left(  \left(  {R_{6}}^{2}{R_{2}%
}^{1}{R_{3}}^{1}\right)  \cdot a\right)  \right\rangle \left(  \left(  {R_{3}%
}^{2}{R_{5}}^{2}\right)  \cdot{x_{2}}^{1}\right)  {x_{1}}^{1}\otimes{R_{4}%
}^{1}{R_{5}}^{1}\otimes\alpha_{A\left(  1\right)  }\vartriangleleft{R_{1}}%
^{2}\\
& \otimes{x_{2}}^{2}\otimes{R_{1}}^{1}{R_{6}}^{1}{R_{2}}^{2}{R_{4}}^{2}\otimes
x{_{1}}^{2}\rightharpoonup\alpha_{A\left(  2\right)  }\\
& =a\left(  {R_{5}}^{2}\cdot{x_{2}}^{1}\right)  {x_{1}}^{1}\otimes{R_{4}}%
^{1}{R_{5}}^{1}\otimes\alpha_{A\left(  1\right)  }\vartriangleleft{R_{1}}%
^{2}\otimes{x_{2}}^{2}\otimes{R_{1}}^{1}{R_{4}}^{2}\otimes x{_{1}}%
^{2}\rightharpoonup\alpha_{A\left(  2\right)  }.
\end{align*}
Applying $id\otimes\varepsilon\otimes1_{A}\otimes\alpha_{A}\otimes
id\otimes1_{A}$ to both sides, we obtain $\left(  {R_{1}}^{2}{R_{2}}%
^{1}\right)  \cdot a\otimes{R_{1}}^{1}{R_{2}}^{2}=a\otimes1_{H}$.
\end{proof}

Let $\mathcal{C}$ be a finite tensor category and let $\mathcal{Z}\left(
\mathcal{C}\right)  $ be its Drinfeld center. Then the forgetful functor
$F_{\mathcal{C}}:\mathcal{Z}\left(  \mathcal{C}\right)  \rightarrow
\mathcal{C}$ has a right adjoint $I_{\mathcal{C}}:\mathcal{C}\rightarrow
\mathcal{Z}\left(  \mathcal{C}\right)  $.
From~\cite{EtingofN-O-2011Weakly,DavydovMugerNikshychOstrik2013Witt}, the
object $A=I_{\mathcal{C}}\left(  1\right)  $ has a natural structure of
commutative algebra in $\mathcal{Z}\left(  \mathcal{C}\right)  $, and the
functor $I_{\mathcal{C}}$ induces a tensor equivalence $\mathcal{C}\approx
A$-$\operatorname{mod}_{\mathcal{Z}\left(  \mathcal{C}\right)  }$.

We close this section by considering what happens when $\mathcal{C}={}%
_{H}\mathcal{M}$ for a finite dimensional Hopf algebra $H$. Note that the
Drinfeld double $D\left(  H\right)  $ for any finite dimensional Hopf algebra
$H$ is quasi-triangular with $R=\sum_{i=1}^{n}\varepsilon\bowtie x_{i}\otimes
p_{i}\bowtie1_{H},$ where $\left\{  x_{i}\right\} _{i=1}^{n},\ \left\{
p_{i}\right\} _{i=1}^{n}$ are dual bases of $H$ and $H^{\ast}$. Then $H$
itself is a quantum commutative left $D\left(  H\right)  $-module algebra with
actions given by%
\[
\left(  \varepsilon\bowtie h\right)  \cdot l=h_{\left(  1\right)  }lS\left(
h_{\left(  2\right)  }\right)  ,\quad\left(  p\bowtie1_{H}\right)  \cdot
l=l\leftharpoonup S^{-1}\left(  p\right)  ,\text{ for }h,l\in H,p\in H^{\ast}.
\]
We will achieve the equivalences $\mathcal{C}\approx A$-$\operatorname{mod}%
_{\mathcal{Z}\left(  \mathcal{C}\right)  }$ via an algebra isomorphism
$H\#D\left(  H\right)  \cong\mathcal{H}\left(  H^{cop}\right)  \otimes H$,
where $\mathcal{H}\left(  H^{cop}\right)  $ is the Heisenberg double of
$H^{cop}$.

\begin{proposition}
Let $H$ be a finite dimensional Hopf algebra. Then as algebras $H\#D\left(
H\right)  \cong\mathcal{H}\left(  H^{cop}\right)  \otimes H$. Moreover, the
tensor categories $_{H\#D\left(  H\right)  }\mathcal{M}$ and $_{H^{cop}%
}\mathcal{M}$ are equivalent.
\end{proposition}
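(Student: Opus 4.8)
The plan has three parts: reduce the Heisenberg double to a matrix algebra, produce the algebra isomorphism by presenting $D(H)$ as an iterated smash product and splitting off the inner part of its action, and deduce the monoidal statement from the interpretation of the Cohen--Westreich category as the category of modules over an algebra in a Drinfeld center.

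\emph{Reduction and the algebra isomorphism.} For any finite dimensional Hopf algebra $K$ the Heisenberg double $\mathcal{H}(K)$ acts faithfully on $K$ by its Fock representation, and since $\dim\mathcal{H}(K)=(\dim K)^{2}=\dim\operatorname{End}_{k}K$ this action is an isomorphism $\mathcal{H}(K)\cong\operatorname{End}_{k}K$; taking $K=H^{cop}$ and writing $n=\dim H$ gives $\mathcal{H}(H^{cop})\otimes H\cong\operatorname{End}_{k}H\otimes H\cong M_{n}(k)\otimes H$, of dimension $n^{3}=\dim(H\#D(H))$. To build the isomorphism I would use that $D(H)=H^{\ast cop}\bowtie H$ is a double cross product, so that the smash product by it is an iterated smash product, $H\#D(H)\cong(H\#H^{\ast cop})\#H$, in which $H^{\ast cop}\subseteq D(H)$ acts on the module algebra $H$ by $p\cdot l=l\leftharpoonup S^{-1}(p)$ and $H\subseteq D(H)$ acts on $H\#H^{\ast cop}$ as a module algebra through its actions on $H$ (adjoint) and on $H^{\ast cop}$ (coadjoint). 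Two facts then finish the argument: first, $H\#H^{\ast cop}$ with the stated action is a presentation of the Heisenberg double $\mathcal{H}(H^{cop})\cong\operatorname{End}_{k}H$; second, the residual action of $H$ on $\operatorname{End}_{k}H$ is the generalized inner action $h\cdot T=\ell(h_{(1)})\,T\,\ell(S(h_{(2)}))$ implemented by the left regular embedding $\ell\colon H\hookrightarrow\operatorname{End}_{k}H$. By the classical lemma that a smash product along a generalized inner action is an ordinary tensor product (via $T\#h\mapsto T\,\ell(h_{(1)})\otimes h_{(2)}$) one obtains $(H\#H^{\ast cop})\#H\cong\mathcal{H}(H^{cop})\otimes H$, as desired.

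\emph{The main obstacle} is the bicrossed-product bookkeeping hidden in these two facts: one must push the multiplication of $D(H)$, with its two hit actions, the antipode and its inverse, and the $\mathrm{cop}$ on the $H^{\ast}$-leg, through the iterated-smash decomposition; verify that the residual $H$-action on $H\#H^{\ast cop}$ is genuinely $\ell$-inner on all of $\operatorname{End}_{k}H$ and not merely on the image of $\ell$; and match conventions so that $H\#H^{\ast cop}$ is the Heisenberg double of $H^{cop}$ rather than of $H$ or $H^{op}$. These are routine Sweedler computations, but they are where every $S^{\pm1}$ and flip must line up, and the same flips are what force a $\mathrm{cop}$ into the categorical statement.

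\emph{The tensor equivalence.} Since $H$ is a quantum commutative $D(H)$-module algebra, ${}_{H\#D(H)}\mathcal{M}$ carries the Cohen--Westreich monoidal structure, with $M\otimes N=M\otimes_{H}N$ and $D(H)$ acting through $\Delta_{D(H)}$; this uses only quantum commutativity, not any weak Hopf algebra structure (indeed $D(H)$ is in general neither involutory nor semisimple, so the weak-bialgebra results of this section do not apply here). Now ${}_{H\#D(H)}\mathcal{M}$ with this structure is exactly the category $\operatorname{mod}_{H_{ad}}$ of modules over the commutative algebra $H_{ad}$ ($=H$ as a $D(H)$-module algebra) inside the braided category ${}_{D(H)}\mathcal{M}\simeq\mathcal{Z}({}_{H^{cop}}\mathcal{M})$: an $H_{ad}$-module in ${}_{D(H)}\mathcal{M}$ is the same thing as an $H\#D(H)$-module, and the balanced tensor product of such modules, computed with the braiding, reproduces the Cohen--Westreich formula. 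Under the canonical identification of $H_{ad}$ with $I_{\mathcal{C}}(1)$ for $\mathcal{C}={}_{H^{cop}}\mathcal{M}$ --- which is precisely the action $h\cdot l=h_{(1)}lS(h_{(2)})$, $p\cdot l=l\leftharpoonup S^{-1}(p)$ written in the excerpt --- the equivalence $\mathcal{C}\approx I_{\mathcal{C}}(1)\text{-}\operatorname{mod}_{\mathcal{Z}(\mathcal{C})}$ recalled just above yields a tensor equivalence ${}_{H\#D(H)}\mathcal{M}\simeq{}_{H^{cop}}\mathcal{M}$. As a consistency check, at the level of abelian categories this matches the Morita equivalence ${}_{\mathcal{H}(H^{cop})\otimes H}\mathcal{M}\simeq{}_{H}\mathcal{M}={}_{H^{cop}}\mathcal{M}$ coming from the algebra isomorphism. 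The only delicate point here is identifying $H_{ad}$ with $I_{\mathcal{C}}(1)$ with the correct $\mathrm{cop}$, which is again settled by the conventions fixed in the first part.
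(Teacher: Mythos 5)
Your overall architecture for the algebra isomorphism is the same as the paper's: identify $H\#(H^{\ast}\bowtie 1_{H})$ with the Heisenberg double $\mathcal{H}(H^{cop})\cong\operatorname{End}_{k}H$, and then split off a copy of $H$ by trivializing the residual action. But the concrete claim on which your version rests --- that the residual $H$-action on $\operatorname{End}_{k}H$ is inner via the left regular embedding $\ell$, i.e.\ $h\cdot T=\ell(h_{(1)})\,T\,\ell(S(h_{(2)}))$ --- is false, and this is exactly the point you flagged as needing verification. Conjugation by $\ell$ does reproduce the adjoint action on the subalgebra $\ell(H)$, but on the $H^{\ast}$-leg it produces a one-sided translate rather than the coadjoint action that $H\subseteq D(H)$ actually induces there. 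Already for $H=kG$ with $G$ nonabelian this fails: in the Fock representation $\rho(p_{x})$ is the projection onto $k\,x^{-1}$, so $\ell(g)\rho(p_{x})\ell(g)^{-1}=\rho(p_{xg^{-1}})$, whereas the coadjoint action requires $\rho(p_{gxg^{-1}})$. The correct implementing algebra map is the adjoint representation $h\mapsto\left(l\mapsto h_{(1)}lS(h_{(2)})\right)$ (for $kG$, conjugation $l\mapsto glg^{-1}$), and this is precisely what the paper's explicit centralizer $C$ encodes: the $\operatorname{End}_{k}H$-leg of its elements $\sum_{i}S(h_{(1)})S(x_{i(2)})h_{(3)}S^{2}(x_{i(1)})\#(p_{i}\bowtie h_{(2)})$ is the operator $l\mapsto S(h_{(1)})\,l\,h_{(3)}$, not $\ell(S(h))$. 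As written, your untwisting map $T\#h\mapsto T\ell(h_{(1)})\otimes h_{(2)}$ is therefore not an algebra map; replacing $\ell$ by the adjoint representation repairs the argument and essentially reduces it to the paper's centralizer computation.

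The categorical half takes a genuinely different and in principle legitimate route: instead of the paper's explicit tensor functor $M\mapsto H\otimes M$ with its direct verification of the tensor structure, you invoke the equivalence $\mathcal{C}\approx I_{\mathcal{C}}(1)\text{-}\operatorname{mod}_{\mathcal{Z}(\mathcal{C})}$ recalled just before the proposition. This is independent of the flawed step above, but it leaves unproved the key assertion that the Cohen--Westreich product $\otimes_{H}$ on ${}_{H\#D(H)}\mathcal{M}$ coincides with the balanced tensor product over the commutative algebra $I_{\mathcal{C}}(1)$ computed with the braiding of $\mathcal{Z}(\mathcal{C})$, as well as the placement of the $\mathrm{cop}$; that identification is essentially the content of the paper's explicit computation, so the work is deferred rather than avoided.
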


\begin{proof}
Observe that $H\#\left(  H^{\ast}\bowtie1_{H}\right)  $ is a subalgebra of
$H\#D\left(  H\right)  =H\#\left(  H^{\ast}\bowtie H\right)  $, and it is
isomorphic to the usual Heisenberg double $\mathcal{H}\left(  H^{cop}\right)
=H^{cop}\#\left(  H^{cop}\right) ^{\ast}$ via $l\#p\bowtie1\mapsto
l\#S^{-1}\left(  p\right)  $, hence $H\#\left(  H^{\ast}\bowtie1_{H}\right)  $
is a central simple subalgebra of $H\#D\left(  H\right)  $.

Let $\left\{  x_{i},p_{i}\right\} _{i=1}^{n}$ be a dual basis for $H$. It is
routine to check that the subspace $C=\left\{  \sum_{i=1}^{n}S\left(
h_{\left(  1\right)  }\right)  S\left(  x_{i\left(  2\right)  }\right)
h_{\left(  3\right)  }S^{2}\left(  x_{i\left(  1\right)  }\right)  \#\left(
p_{i}\bowtie h_{\left(  2\right)  }\right)  \mid h\in H\right\}  $ is
contained in the centralizer $C_{H\#D\left(  H\right)  }\left(  H\#\left(
H^{\ast}\bowtie1_{H}\right)  \right)  $. It is well-known that $H\#D\left(
H\right)  \cong\left(  H\#H^{\ast}\right)  \otimes C_{H\#D\left(  H\right)
}\left(  H\#H^{\ast}\right)  $. Evidently $\varepsilon_{H}\otimes
\varepsilon_{H^{\ast}}\otimes id_{H}:C\rightarrow H$ is surjective, so $\dim
C\geq\dim H$. Hence $C=C_{H\#D\left(  H\right)  }\left(  H\#H^{\ast}\right)  $.

Moreover, for $h,g\in H$,
\begin{align*}
& \left(  \sum_{i=1}^{n}S\left(  x_{i\left(  2\right)  }h_{\left(  1\right)
}\right)  h_{\left(  3\right)  }S^{2}\left(  x_{i\left(  1\right)  }\right)
\#\left(  p_{i}\bowtie h_{\left(  2\right)  }\right)  \right)  \left(
\sum_{j=1}^{n}S\left(  x_{j\left(  2\right)  }g_{\left(  1\right)  }\right)
g_{\left(  3\right)  }S^{2}\left(  x_{j\left(  1\right)  }\right)  \#\left(
p_{j}\bowtie g_{\left(  2\right)  }\right)  \right) \\
& =\sum_{i=1}^{n}S\left(  \left(  hg\right) _{\left(  1\right)  }\right)
S\left(  x_{i\left(  2\right)  }\right)  \left(  hg\right) _{\left(  3\right)
}S^{2}\left(  x_{i\left(  1\right)  }\right)  \#\left(  p_{i}\bowtie\left(
hg\right) _{\left(  2\right)  }\right)  .
\end{align*}
This shows that as an algebra, $C$ is isomorphic to $H$. Hence
\[
H\#D\left(  H\right)  \cong\mathcal{H}\left(  H^{cop}\right)  \otimes H
\]
as algebras.

This isomorphism induces an equivalence of categories between $\mathcal{C}%
={}_{H}\mathcal{M}$ and $\mathcal{D}={}_{H\#D\left(  H\right)  }\mathcal{M}$.
Let $M$ be a left $H$-module. Then $H\otimes M$ is a left $H\#D\left(
H\right)  $-module via
\[
\left(  l\#\left(  p\bowtie h\right)  \right)  \cdot\left(  h^{\prime}\otimes
m\right)  =l\left(  \left(  h_{\left(  1\right)  }h^{\prime}S\left(
h_{\left(  3\right)  }\right)  \right)  \leftharpoonup S^{-1}\left(  p\right)
\right)  \otimes h_{\left(  2\right)  }m.
\]
Conversely, every left $H\#D\left(  H\right)  $-module is of this form. Then
$F:\mathcal{C\rightarrow D}$, $M\mapsto H\otimes M$ is an equivalence of
$k$-linear categories. By (\ref{MonoidalStruOnAsmashH}), $H\otimes M$ is an
$H$-bimodule with actions given by%
\[
h\cdot\left(  h^{\prime}\otimes m\right)  =hh^{\prime}\otimes m,\text{
}\left(  h^{\prime}\otimes m\right)  \cdot h=h^{\prime}h_{\left(  1\right)
}\otimes S^{-1}\left(  h_{\left(  2\right)  }\right)  m,
\]
for any $M\in\mathcal{C}$, $m\in M$, $h,h^{\prime}\in H$. Let $M,N\in
{}\mathcal{C}$, then $F\left(  M\right)  \otimes_{\mathcal{D}}F\left(
N\right)  =\left(  H\otimes M\right)  \otimes_{H}\left(  H\otimes N\right)  $
with $H\#D\left(  H\right)  $-action given by
\begin{align*}
& \left(  l\#\left(  p\bowtie h\right)  \right)  \cdot\left(  \left(
h^{\prime}\otimes m\right)  \otimes_{H}\left(  1_{H}\otimes n\right)  \right)
\\
& =\left(  \left(  l\#\left(  p_{\left(  2\right)  }\bowtie h_{\left(
1\right)  }\right)  \right)  \cdot\left(  h^{\prime}\otimes m\right)  \right)
\otimes_{H}\left(  \left(  1\#\left(  p_{\left(  1\right)  }\bowtie h_{\left(
2\right)  }\right)  \right)  \cdot\left(  1_{H}\otimes n\right)  \right) \\
& =\left\langle S^{-1}\left(  p\right)  ,h_{\left(  1\right)  }h_{\left(
1\right)  }^{\prime}S\left(  h_{\left(  8\right)  }\right)  \right\rangle
\left(  lh_{\left(  2\right)  }h_{\left(  2\right)  }^{\prime}S\left(
h_{\left(  4\right)  }\right)  \otimes h_{\left(  3\right)  }m\right)
\cdot\left(  h_{\left(  5\right)  }S\left(  h_{\left(  7\right)  }\right)
\right)  \otimes_{H}\left(  1_{H}\otimes h_{\left(  6\right)  }n\right) \\
& =\left(  l\left(  \left(  h_{\left(  1\right)  }h^{\prime}S\left(
h_{\left(  4\right)  }\right)  \right)  \leftharpoonup S^{-1}\left(  p\right)
\right)  \otimes h_{\left(  3\right)  }m\right)  \otimes_{H}\left(
1_{H}\otimes h_{\left(  2\right)  }n\right)  .
\end{align*}
So we see that a tensor structure on $F:\mathcal{C}^{op}\rightarrow
\mathcal{D}$ is given by
\[
F\left(  M\right)  \otimes_{\mathcal{D}}F\left(  N\right)  =\left(  H\otimes
M\right)  \otimes_{H}\left(  H\otimes N\right)  \cong H\otimes\left(  M\otimes
N\right)  =F\left(  M\otimes_{\mathcal{C}^{op}}N\right)  ,
\]
where $\mathcal{C}^{op}={}_{H^{cop}}\mathcal{M}$ with monoidal structure
opposite to $\mathcal{C}$. Therefore, ${}_{H\#D\left(  H\right)  }\mathcal{M}$
is tensor equivalent to $_{H^{cop}}\mathcal{M}$.
\end{proof}

Let $\mathcal{C}$ be a fusion category. We denote the set of isomorphism
classes of simple objects of $\mathcal{C}$ by $\operatorname*{Irr}\left(
\mathcal{C}\right)  $. Recall from~\cite{Etingof2005On} that the
Frobenius-Perron dimension $\operatorname{FPdim}X$ of an object $X\in
\mathcal{C}$ is defined to be the largest positive eigenvalue of the fusion
matrix, that is, the matrix of left or right multiplication by the class of
$X$ with respect to the basis $\operatorname*{Irr}\left(  \mathcal{C}\right)
$ of the Grothendieck ring $\operatorname*{Gr}\left(  \mathcal{C}\right)  $ of
$\mathcal{C}$. Moreover, the number $\operatorname{FPdim}X$ is an algebraic
integer, as it is an eigenvalue of an integral matrix.

Now consider the weak Hopf algebra $A\#H$ in
Theorem~\ref{Thm_WHA_stru_of_AsmH}. If $A$ is $H$-simple, then $A$ is a simple
object of multifusion category $_{A\#H}\mathcal{M}$. Hence $_{A\#H}%
\mathcal{M}$ is a fusion category, and we can compute the Frobenius-Perron
dimensions of objects.

\begin{theorem}
\label{Thm_fpdim}Let $\left(  H,R\right)  $ be a semisimple quasi-triangular
Hopf algebra. Let $A$ be a strongly separable quantum commutative $H$-simple
module algebra such that $u$ acts on $A$ trivially. Then for any nonzero
$V\in{}_{A\#H}\mathcal{M}$, we have
\[
\dim A\mid\dim V,\text{and }\operatorname{FPdim}V=\frac{\dim V}{\dim A}.
\]

\end{theorem}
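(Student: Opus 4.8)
The plan is to identify the Frobenius--Perron dimension with the normalized vector‑space dimension $V\mapsto\dim_k V/\dim_k A$, using the uniqueness of the Frobenius--Perron character of a fusion category. First I would fix the ambient picture. By Theorem~\ref{Thm_WHA_stru_of_AsmH} and the discussion just above the statement, $A\#H$ is a semisimple weak Hopf algebra, ${}_{A\#H}\mathcal{M}$ is a fusion category, its tensor product is $\otimes_A$ for the bimodule structures $a\cdot m=(a\#1_H)m$, $m\cdot a=\big((R^{2}\cdot a)\#R^{1}\big)m$ (so that~(\ref{MonoidalStruOnAsmashH}) defines the tensor product), and its unit object is the target subalgebra $(A\#H)_t=A\#1_H\cong A$; in particular $\dim_k\mathbf 1=\dim_k A$. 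It is also worth keeping in mind the tensor functor $T\colon{}_H\mathcal{M}\xrightarrow{\ \sim\ }{}_B\mathcal{M}\to{}_{A\#H}\mathcal{M}$ obtained by composing the braided tensor equivalence ${}_B\mathcal{M}\simeq{}_H\mathcal{M}$ with restriction along the weak Hopf algebra embedding $\varphi\colon A\#H\hookrightarrow B$ of Proposition~\ref{Prop_AsmaH_is_subWHA}; on underlying vector spaces $T$ is $M\mapsto A^{\ast}\otimes M$, hence multiplies $\dim_k$ by $\dim_k A$. Since tensor functors preserve Frobenius--Perron dimensions and ${}_H\mathcal{M}$ carries a fibre functor (so that there $\operatorname{FPdim}=\dim_k$), this already gives $\operatorname{FPdim}V=\dim_k V/\dim_k A$ for every $V$ in the image of $T$, which indicates the general formula.

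To get it in general I would show that $d\colon\operatorname{Gr}({}_{A\#H}\mathcal{M})\to\mathbb{Q}$, $d([V])=\dim_k V/\dim_k A$, is a unital ring homomorphism taking strictly positive values on every nonzero object. Given this, the uniqueness of the Frobenius--Perron character of a fusion category (cf.~\cite{Etingof2005On}) forces $d=\operatorname{FPdim}$. Then $\dim_k V/\dim_k A=\operatorname{FPdim}V$ is at once a rational number and an algebraic integer (being an eigenvalue of the integral fusion matrix of $[V]$), hence a nonnegative integer; this gives $\dim_k A\mid\dim_k V$ and the asserted equality $\operatorname{FPdim}V=\dim_k V/\dim_k A$ simultaneously. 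Additivity of $d$, the normalization $d(\mathbf 1)=\dim_k(A\#1_H)/\dim_k A=1$, and strict positivity are clear, so the only point requiring work is multiplicativity: $\dim_k(V\otimes_A W)=\dim_k V\cdot\dim_k W/\dim_k A$ for all $V,W\in{}_{A\#H}\mathcal{M}$.

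For multiplicativity I would apply Skryabin's Lemma~\ref{lemma_skryabin}. View a left $A\#H$-module $V$ as a right $A$-module via the source subalgebra $(A\#H)_s\cong A^{op}$ (this is exactly the action used in $\otimes_A$) and as an $H$-module via $1_A\#H$. A short computation with the quasi-triangularity axioms shows $(1_A\#h)(v\cdot a)=(h_{(1)}\cdot v)\cdot(h_{(2)}\cdot a)$, i.e.\ the action map $V\otimes A\to V$ is $H$-linear for the diagonal action; equivalently $V\in\mathcal{M}_A^{H^{\ast}}$, where $A$ is regarded as the finite-dimensional semisimple comodule algebra over the Hopf algebra $H^{\ast}$, and it is $H^{\ast}$-simple precisely because $A$ is $H$-simple. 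Lemma~\ref{lemma_skryabin} then yields an integer $l\geq 1$ such that $V^{l}$ is free as a right $A$-module, necessarily of rank $r=l\,\dim_k V/\dim_k A$. Consequently $V^{l}\otimes_A W\cong W^{\oplus r}$ as vector spaces, while also $V^{l}\otimes_A W\cong(V\otimes_A W)^{\oplus l}$; comparing dimensions gives $l\,\dim_k(V\otimes_A W)=r\,\dim_k W=l\,\dim_k V\cdot\dim_k W/\dim_k A$, whence the claim, and with the previous paragraph the proof is complete.

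The step I expect to be the real obstacle is this last one: one must pin down precisely which right $A$-action on $V$ enters $\otimes_A$, check (here the $R$-matrix is essential) that together with the $H$-action it makes $V$ an object of $\mathcal{M}_A^{H^{\ast}}$, and confirm that $A$ is $H^{\ast}$-simple as a comodule algebra so that Lemma~\ref{lemma_skryabin} genuinely applies; the rank count and the appeal to uniqueness of $\operatorname{FPdim}$ are then routine, once one takes for granted (as the paragraph preceding the theorem asserts) that ${}_{A\#H}\mathcal{M}$ is a fusion category with unit $A\#1_H$.
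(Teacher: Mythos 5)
Your proof is correct, and it rests on the same key input as the paper's: Skryabin's Lemma~\ref{lemma_skryabin} applied to $V$ viewed in $\mathcal{M}_{A}^{H^{\ast}}$ via the source-subalgebra action $v\cdot a=\left(  \left(  R^{2}\cdot a\right)  \#R^{1}\right)  v$, which yields $V^{l}\cong A^{r}$ as right $A$-modules with $r/l=\dim V/\dim A$. Where you diverge is the endgame. The paper tensors this freeness isomorphism with the single object $L=A\#H$ (using the module isomorphism $L\otimes_{L_{t}}V\cong L\otimes_{L_{s}}V$ to turn the categorical tensor product into $\otimes_{L_{s}}$), obtaining $[L]\cdot(n[V])=m[L]$ in $\operatorname*{Gr}\left(  {}_{L}\mathcal{M}\right)  $; since $[L]$ has strictly positive coordinates in the basis $\operatorname*{Irr}\left(  {}_{L}\mathcal{M}\right)  $, the classical Frobenius--Perron theorem identifies $m/n$ with the largest eigenvalue of the fusion matrix of $[V]$, i.e.\ with $\operatorname{FPdim}V$. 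You instead tensor the freeness isomorphism with an arbitrary $W$ to show that $[V]\mapsto\dim V/\dim A$ is a character of the whole Grothendieck ring that is positive on simples, and then invoke the uniqueness of such a character from~\cite{Etingof2005On}. Both routes extract the divisibility from the fact that $\operatorname{FPdim}V$ is a rational algebraic integer. Your version establishes slightly more along the way (full multiplicativity of the normalized dimension, $\dim\left(  V\otimes_{A}W\right)  =\dim V\dim W/\dim A$), at the price of citing the uniqueness theorem; the paper's argument is marginally more self-contained, needing only the Perron--Frobenius statement it cites, and avoids having to verify a ring homomorphism. The one step you flag as delicate --- that the right $A$-action entering $\otimes_{A}$ is the $(A\#H)_{s}$-action, that the compatibility $\left(  1_{A}\#h\right)  \left(  v\cdot a\right)  =\left(  h_{\left(  1\right)  }\cdot v\right)  \cdot\left(  h_{\left(  2\right)  }\cdot a\right)  $ follows from the quasi-triangularity axiom, and that $H$-simplicity of $A$ gives $H^{\ast}$-simplicity of the comodule algebra --- checks out and is exactly the verification the paper also performs (implicitly) before applying Lemma~\ref{lemma_skryabin}.
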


\begin{proof}
Let $L$ be an arbitrary weak Hopf algebra, and let $L_{t}$, $L_{s}$ be its
target and source subalgebras. We view $L$ as a right $L_{s}$-module via the
multiplication and as a right $L_{t}$-module via $l\cdot z=S\left(  z\right)
l $, for $l\in L$, $z\in L_{t}$. For any $V\in{}_{L}\mathcal{M}$,
$L\otimes_{L_{t}}V$ is a left $L$-module via the diagonal action, i.e.
$l^{\prime}\left(  l\otimes v\right)  =l_{\left(  1\right)  }^{\prime}l\otimes
l_{\left(  2\right)  }^{\prime}v$. Then there is an $L$-module isomorphism
\[
L\otimes_{L_{t}}V\rightarrow L\otimes_{L_{s}}V,l\otimes v\mapsto l_{\left(
1\right)  }\otimes S\left(  l_{\left(  2\right)  }\right)  v,
\]
where we view $L\otimes_{L_{s}}V$ as a left $L$-module via the left
multiplication on $L$.

Now take $L=A\#H$, so $L_{t}=A\#1_{H}\cong A$ and $L_{s}=\left\{  R^{2}\cdot
a\#R^{1}\mid a\in A\right\}  \cong A^{op}$. If $V\in{}_{A\#H}\mathcal{M}$,
$V\in\mathcal{M}_{A}^{H^{\ast}}$ with the $H^{\ast}$-coaction induced from the
left $H$-action and the right $A$-module action given by $v\cdot a=\left(
R^{2}\cdot a\#R^{1}\right)  v,$ for $a\in A$, $v\in V$. Note that $A$ is an
$H^{\ast}$-simple comodule algebra, so by Lemma~\ref{lemma_skryabin}, there
exist $m,n\in\mathbb{N}$ such that $V^{n}\cong A^{m}$ as right $A$-modules,
hence $V^{n}\cong\left(  L_{s}\right) ^{m}$ as left $L_{s}$-modules, and
$L\otimes_{L_{t}}V^{n}\cong L^{m}$ as left $L$-modules.

In the Grothendieck ring $\operatorname*{Gr}\left( _{L}\mathcal{M}\right)  $,
this implies that $[L]\cdot(n[V])=(m[L])$, hence $\left[  L\right]  $ is an
eigenvector with positive eigenvalue $\dfrac{m}{n}$ for the matrix $N_{V}$ of
right multiplication $\left[  V\right]  $ on the basis $\operatorname*{Irr}%
(_{L}\mathcal{M})$, so by Frobenius-Perron theorem (\cite[XIII.2]%
{Gantmacher1998theory}),
\[
\operatorname{FPdim}V=\frac{m}{n}=\frac{\dim V}{\dim A}%
\]
is an integer.
\end{proof}

\section{Almost-Triangularity and Applications}

Recall from~\cite{LiuZhu2007Almost} that a quasi-triangular Hopf algebra
$\left(  H,R\right)  $ is called almost-triangular if ${R}^{21}R\in Z\left(
H\right)  \otimes Z\left(  H\right)  $, or, equivalently, if ${R}^{21}R\in
Z\left(  H\right)  \otimes H$, where $Z\left(  H\right)  $ denotes the center
of $H$.

Let $\mathcal{C}$ be a finite braided multitensor category with braided
$\sigma$. Then the end $U\left(  \mathcal{C}\right)  =\int_{X\in\mathcal{C}%
}X\otimes X^{\ast}$ has a natural structure of Hopf algebra in $\mathcal{C}$
(see~\cite{Lyubashenko1995Modular,Majid1995foundations} for the definition of
$U\left(  \mathcal{C}\right)  $). If $\mathcal{C}={}_{H}\mathcal{M}$ for a
finite dimensional quasi-triangular Hopf algebra $\left(  H,R\right)  $, then
the braided Hopf algebra $U\left(  \mathcal{C}\right)  $ is precisely the
transmuted braided group $H_{R}$.

For a braided multitensor category $\left(  \mathcal{C},\sigma\right)  $, the
\textit{M\"{u}ger center} of $\mathcal{C}$ is the full subcategory
$\mathcal{C}^{\prime}$ consisting of all objects $V\in\mathcal{C}$ such that
$\sigma_{X,V}\circ\sigma_{V,X}=id_{X\otimes X}$, for all $X\in\mathcal{C}$.

A cocommutative coalgebra in a braided category $\left(  \mathcal{C}%
,\sigma\right)  $ is a coalgebra $\left(  C,\Delta_{C},\varepsilon_{C}\right)
$ in $\mathcal{C}$ such that $\Delta_{C}=\sigma_{C,C}\circ\Delta_{C}$. The
next definition can be viewed as a categorical version of almost-triangularity.

\begin{definition}
A finite braided multitensor category $\mathcal{C}$ is called almost-symmetric
if the braided Hopf algebra $U\left(  \mathcal{C}\right)  $ is cocommutative
in $\mathcal{C}$.
\end{definition}

The following proposition illustrates the meaning of this notion.

\begin{proposition}
\label{Proposition_H_R*_is_q_comm}Let $\left(  H,R\right)  $ be a finite
dimensional quasi-triangular Hopf algebra. Then the following are equivalent:

\begin{enumerate}
\item The braided tensor category $_{H}\mathcal{M}$ is almost-symmetric.

\item $\left(  H,R\right)  $ is almost-triangular.

\item The left module algebra $H_{R}{}^{\ast}$ is quantum commutative with
respect to $\left(  H^{op},R^{21}\right)  $.

\item For any $h\in H$, $\left(  {R_{2}}^{2}{R_{1}}^{1}\right)  \cdot
_{ad}h\otimes R_{2}{}^{1}R_{1}{}^{2}=h\otimes1_{H}$, that is, $\left(
H,\cdot_{ad}\right)  $ is contained in the M\"{u}ger center of $\mathcal{C}$.
\end{enumerate}
\end{proposition}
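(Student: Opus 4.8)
The plan is to run through the implications $(1)\Leftrightarrow(3)$, then $(3)\Leftrightarrow(2)$, and finally $(2)\Leftrightarrow(4)$, after first settling the parenthetical clause inside $(4)$. Relabelling the two copies of $R$ identifies the element $R_2{}^2R_1{}^1\otimes R_2{}^1R_1{}^2$ of $(4)$ with the monodromy $R^{21}R$; and since the braiding of $_H\mathcal{M}$ determined by $R$ is $\sigma_{V,W}(v\otimes w)=(R^2\cdot w)\otimes(R^1\cdot v)$, this monodromy acts on any $V\otimes W$ as the double braiding $\sigma_{W,V}\circ\sigma_{V,W}$, its first leg acting on $V$ and its second on $W$. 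Taking $W$ to be the regular module and using that the regular representation is faithful, the double braiding is trivial on $(H,\cdot_{ad})\otimes W$ for every $W$ exactly when $(R_2{}^2R_1{}^1)\cdot_{ad}h\otimes R_2{}^1R_1{}^2=h\otimes1_H$ for all $h$. So $(4)$ is literally the statement that the underlying object of $U(\mathcal{C})=H_R$ lies in the M\"{u}ger center, and $(1)$ is the statement that $H_R$, as a coalgebra object, is cocommutative.

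For $(1)\Leftrightarrow(3)$ I would invoke linear duality inside the rigid braided category $\mathcal{C}$. A finite-dimensional coalgebra object $C$ satisfies $\sigma_{C,C}\circ\Delta_C=\Delta_C$ if and only if the dual algebra object $C^{\ast}$ is commutative in the braided sense. Here $C=H_R$, and spelling out the induced structure, $H_R{}^{\ast}$ carries the right $H$-action $\langle d^{\ast}\leftharpoonup\!\!\!\leftharpoonup h,d\rangle=\langle d^{\ast},h\cdot_{ad}d\rangle$ recalled before Proposition~\ref{Prop_ND_is_WHA}, i.e. it is a left $H^{op}$-module algebra; the bookkeeping that carries the braiding across the duality is precisely what replaces $(H,R)$ by the quasi-triangular structure $(H^{op},R^{21})$, and under this translation braided commutativity of $H_R{}^{\ast}$ becomes the identity $f\ast_R g=(R^1\cdot g)\ast_R(R^2\cdot f)$ of $(3)$. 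Thus $(1)$ and $(3)$ are equivalent essentially formally, once the module and braiding conventions are matched.

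The remaining link — $(3)\Leftrightarrow(2)$, equivalently $(2)\Leftrightarrow(4)$ — is the computational heart. Transposing the quantum commutativity of $H_R{}^{\ast}$ against a dual basis rewrites it, via $\Delta_R(h)=h_{(1)}S(R^2)\otimes R^1\cdot_{ad}h_{(2)}$, as the cocommutativity identity $\sigma_{H_R,H_R}\circ\Delta_R=\Delta_R$ expressed purely in $R$; using the quasi-triangularity axioms, the intertwining relation $(R^{21}R)\Delta(a)=\Delta(a)(R^{21}R)$, the identities $(S\otimes S)(R)=R$ and $(S\otimes\mathrm{id})(R)=R^{-1}$, and the normalizations $(\varepsilon\otimes\mathrm{id})(R^{21}R)=(\mathrm{id}\otimes\varepsilon)(R^{21}R)=1_H$, one reduces this to ``$R^{21}R$ has central first leg'', i.e. to almost-triangularity. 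A more direct route to $(2)\Leftrightarrow(4)$: if $R^{21}R\in Z(H)\otimes Z(H)$ then, using the same identities, $(\Delta\otimes\mathrm{id})(R^{21}R)$ collapses to the product in legs $(1,3)$ and $(2,3)$ of two copies of $R^{21}R$, and a short computation with $z\cdot_{ad}h=z_{(1)}hS(z_{(2)})$ then yields $(4)$; conversely $(4)$ forces the first leg of $R^{21}R$ back into $Z(H)$ by reversing these steps.

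The main obstacle is exactly this last equivalence. Being central in the first tensor leg and acting trivially in the adjoint representation are inequivalent demands for a general element — a conjugacy-class sum in $kG$ is central but has non-trivial adjoint action — so the equivalence cannot be purely formal and must exploit the special co-structure of $R^{21}R$: its behaviour under $\Delta$, its interaction with the antipode, and the counit normalizations above. Arranging these so that they combine transparently, rather than through a long opaque manipulation, is where the real work lies; by contrast $(1)\Leftrightarrow(3)$ and the reformulation of $(4)$ are soft.
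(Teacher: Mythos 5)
Your architecture matches the paper's: $(1)\Leftrightarrow(3)$ by duality is exactly what the paper dismisses as obvious, your reading of $(4)$ as ``$(H,\cdot_{ad})$ lies in the M\"{u}ger center'' via the monodromy $R^{21}R$ is correct, and your direct route for $(2)\Rightarrow(4)$ is sound: under almost-triangularity one indeed gets $(\Delta\otimes\mathrm{id})(R^{21}R)=(R^{21}R)_{13}(R^{21}R)_{23}$ (the paper gets the needed centrality of $(\Delta\otimes\mathrm{id})(R^{21}R)$ by citing Lemma~2.1 of the almost-triangular paper), and then applying $m(\mathrm{id}\otimes S)\otimes\mathrm{id}$ together with $(\varepsilon\otimes\mathrm{id})(R^{21}R)=1$ and centrality of the first leg gives $(4)$ without ever needing $(S\otimes\mathrm{id})(R^{21}R)=(R^{21}R)^{-1}$ --- which, incidentally, is false as stated because the legs multiply in the wrong order, so you should drop that identity from your toolkit.

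The genuine gap is $(4)\Rightarrow(2)$. You dispose of it with ``conversely $(4)$ forces the first leg of $R^{21}R$ back into $Z(H)$ by reversing these steps,'' but the forward argument consumes centrality of $R^{21}R$ as a hypothesis at both of its stages (to collapse $(\Delta\otimes\mathrm{id})(R^{21}R)$ to $Q_{13}Q_{23}$, and to move $h$ past $Q_a{}^1$), so there is nothing to reverse; and your own remark that a conjugacy-class sum in $kG$ is central yet acts nontrivially in the adjoint representation shows the implication cannot be formal. What is actually needed is a separate computation: expand $(\Delta\otimes\mathrm{id})(R^{21}R)$ via the hexagon axioms into four copies of $R$, so that hypothesis $(4)$ reads
\[
{R_{2}}^{2}{R_{1}}^{1}\,h\,S\!\left({R_{3}}^{1}\right)S\!\left({R_{4}}^{2}\right)\otimes {R_{4}}^{1}{R_{2}}^{1}{R_{1}}^{2}{R_{3}}^{2}=h\otimes 1_{H},
\]
observe that the same expression with $h$ pulled out to the left also equals $h\otimes 1_{H}$ by the antipode--counit identity together with $(\varepsilon\otimes\mathrm{id})(R^{21}R)=1$, and then cancel the invertible factor built from $R_{3},R_{4}$ (using $(S\otimes\mathrm{id})(R)=R^{-1}$ and the invertibility of $S(R^{2})\otimes R^{1}$) to conclude ${R_{2}}^{2}{R_{1}}^{1}h\otimes {R_{2}}^{1}{R_{1}}^{2}=h{R_{2}}^{2}{R_{1}}^{1}\otimes {R_{2}}^{1}{R_{1}}^{2}$, i.e.\ $R^{21}R\in Z(H)\otimes H$. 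This cancellation trick is the missing idea; without it (or an equivalent substitute) the cycle of implications does not close.
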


\begin{proof}
1) $\Longleftrightarrow$ 3) is obvious.

1) $\Longleftrightarrow$ 4). Note that the braided category $_{H}\mathcal{M} $
is almost-symmetric if and only if for any $h\in H$,
\begin{equation}
\Delta_{R}\left(  h\right)  =R^{2}\cdot_{ad}h^{\left(  2\right)  }\otimes
R^{1}\cdot_{ad}h^{\left(  1\right)  }.\label{HRQuantumCocomm}%
\end{equation}
The right-hand side equals to
\[
\left(  R_{2}{}^{2}{R_{3}}^{2}R_{1}{}^{1}\right)  \cdot_{ad}h_{\left(
2\right)  }\otimes R_{2}{}^{1}h_{\left(  1\right)  }S\left(  {R_{3}}^{1}%
R_{1}{}^{2}\right)  ,
\]
while the left-hand side equals to
\[
R^{2}\cdot_{ad}h_{\left(  2\right)  }\otimes{R}^{1}h_{(1)}.
\]
So (\ref{HRQuantumCocomm}) is equivalent to
\[
\left(  {R_{3}}^{2}R_{1}{}^{1}\right)  \cdot_{ad}h_{\left(  2\right)  }\otimes
h_{\left(  1\right)  }S\left(  {R_{3}}^{1}R_{1}{}^{2}\right)  =h_{\left(
2\right)  }\otimes h_{(1)},\ \forall h\in H\text{.}%
\]
It is equivalent to $\left(  {R_{2}}^{2}{R_{1}}^{1}\right)  \cdot_{ad}h\otimes
R_{2}{}^{1}R_{1}{}^{2}=h\otimes1_{H}$, for all $h\in H$.

4) $\Longrightarrow$ 2). Equality $\left(  {R_{2}}^{2}{R_{1}}^{1}\right)
\cdot_{ad}h\otimes R_{2}{}^{1}R_{1}{}^{2}=h\otimes1_{H}$ implies that
\begin{align*}
& {R_{2}}^{2}{R_{1}}^{1}hS\left(  R_{3}{}^{1}\right)  S\left(  R_{4}{}%
^{2}\right)  \otimes R_{4}{}^{1}R_{2}{}^{1}R_{1}{}^{2}R_{3}{}^{2}\\
& =hR_{2}{}^{2}R_{1}{}^{1}S\left(  R_{3}{}^{1}\right)  S\left(  R_{4}{}%
^{2}\right)  \otimes R_{4}{}^{1}R_{2}{}^{1}R_{1}{}^{2}R_{3}{}^{2}.
\end{align*}
Therefore, ${R_{2}}^{2}{R_{1}}^{1}h\otimes R_{2}{}^{1}R_{1}{}^{2}=hR_{2}{}%
^{2}R_{1}{}^{1}\otimes R_{2}{}^{1}R_{1}{}^{2}$. Thus 4) implies that
$R^{21}R\in Z\left(  H\right)  \otimes H$.

2) $\Longrightarrow$ 4). Suppose that $\left(  H,R\right)  $ is
almost-triangular. Then by~\cite[Lemma 2.1]{LiuZhu2007Almost}, we see that
$\left(  \Delta\otimes id\right)  \left(  R^{21}R\right)  \in Z\left(
H\otimes H\otimes H\right)  $, and 4) holds.
\end{proof}

In the terminology of weak Hopf algebra, we have

\begin{proposition}
\label{prop_AT_WHA}Let $\left(  H,R\right)  $ be a finite dimensional
quasi-triangular weak Hopf algebra, $H_{s}$ and $H_{t}$ be the source and
target subalgebras of $H$. Then the following are equivalent:

\begin{enumerate}
\item The braided multitensor category $_{H}\mathcal{M}$ is almost-symmetric.

\item $R^{21}R\in C_{H}\left(  C_{H}\left(  H_{s}\right)  \right)  \otimes H$.

\item $R^{21}R\in H\otimes C_{H}\left(  C_{H}\left(  H_{t}\right)  \right)  $.

\item $R^{21}R\in C_{H}\left(  C_{H}\left(  H_{s}\right)  \right)  \otimes
C_{H}\left(  C_{H}\left(  H_{t}\right)  \right)  $.

\item For any $b\in C_{H}\left(  H_{s}\right)  $, $\left(  {R_{2}}^{2}{R_{1}%
}^{1}\right)  \cdot_{ad}b\otimes R_{2}{}^{1}R_{1}{}^{2}=1_{\left(  1\right)
}\cdot_{ad}b\otimes1_{\left(  2\right)  }$, that is, $\left(  C_{H}\left(
H_{s}\right)  ,\cdot_{ad}\right)  $ is contained in the M\"{u}ger center of
$_{H}\mathcal{M}$.

\item $R^{21}R\in Z\left(  \Delta\left(  1\right)  \left(  H\otimes H\right)
\Delta\left(  1\right)  \right)  $.
\end{enumerate}
\end{proposition}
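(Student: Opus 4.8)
Here is how I would approach this proposition.

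The plan is to run the proof of Proposition~\ref{Proposition_H_R*_is_q_comm} in the weak setting, carrying the separability idempotent $\Delta(1)=1_{(1)}\otimes1_{(2)}$ through every identity: wherever the Hopf-algebra argument yields ``$Z(H)$'' or ``$1\otimes1_H$'', the weak version should yield the double centralizer $C_H(C_H(H_s))$ (resp.\ $C_H(C_H(H_t))$) or $\Delta(1)$; in the Hopf case $H_s=H_t=k1$, so $C_H(H_s)=H$ and $C_H(C_H(H_s))=Z(H)$, recovering the original statement. Concretely I would first establish the backbone $1)\Leftrightarrow 5)\Leftrightarrow 2)$, and then obtain $2)\Leftrightarrow 3)\Leftrightarrow 4)\Leftrightarrow 6)$ by a left--right symmetry (swapping $H_s$ and $H_t$) together with some bookkeeping about the corner algebra $\Delta(1)(H\otimes H)\Delta(1)$.

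For $1)\Leftrightarrow 5)$ I would first identify $U({}_H\mathcal{M})$ explicitly. Just as for an ordinary quasi-triangular Hopf algebra, where $U({}_H\mathcal{M})$ is Majid's transmuted braided group $H_R$~\cite{Majid1991Braided}, in the weak setting I expect the coend $\int_X X\otimes X^{\ast}$ to be realised on the subalgebra $C_H(H_s)$ equipped with the adjoint action $\cdot_{ad}$ and a transmuted coproduct of the form $\Delta_U(h)=h_{(1)}S(R^2)\otimes R^1\cdot_{ad}h_{(2)}$, suitably truncated by $\Delta(1)$ (this identification being forced, at the level of the final answer, by the shape of condition $5)$). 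Cocommutativity of $U({}_H\mathcal{M})$, i.e.\ $\Delta_U=\sigma_{U,U}\circ\Delta_U$, then unwinds exactly as in the $1)\Leftrightarrow 4)$ computation of Proposition~\ref{Proposition_H_R*_is_q_comm}, the only change being that the right-hand side of the resulting identity acquires $\Delta(1)$ in place of $1\otimes1_H$; what comes out is precisely $5)$. For $5)\Leftrightarrow 2)$ I would multiply out the adjoint actions in $5)$ and use the antipode axioms of the weak Hopf algebra as in the $4)\Rightarrow 2)$ step of that proposition; this turns $5)$ into the statement that $R^{21}R$ commutes with $\Delta(1)(b\otimes1)\Delta(1)$ for all $b\in C_H(H_s)$. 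Since the weak quasi-triangularity axioms already place $R^{21}R$ in the corner $\Delta(1)(H\otimes H)\Delta(1)$, and $C_H(H_s)\supseteq H_t$ supplies the idempotent components needed in the first leg, this is equivalent to the first tensor leg of $R^{21}R$ lying in $C_H(C_H(H_s))$, i.e.\ to $2)$; the reverse implication is immediate.

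Finally, $2)\Leftrightarrow 3)$ I would get from the mirror image of the above with $H_t$ in place of $H_s$, using a weak-Hopf analog of~\cite[Lemma 2.1]{LiuZhu2007Almost} asserting that $(\Delta\otimes id)(R^{21}R)$ is central in the corner $P(H^{\otimes 3})P$ with $P=(\Delta\otimes id)\Delta(1)$; this forces both legs simultaneously and gives $2)\Leftrightarrow 4)$ and $4)\Rightarrow 6)$, while $6)\Rightarrow 2)$ is immediate because an element of $\Delta(1)(H\otimes H)\Delta(1)$ central in that corner has first leg centralizing $C_H(H_s)$ inside the corner, hence lying in $C_H(C_H(H_s))$. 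The step I expect to be the main obstacle is the very first one: pinning down $U({}_H\mathcal{M})$ as a braided Hopf algebra supported on $C_H(H_s)$ with the correct $\Delta(1)$-corrected transmuted coproduct, and then propagating $\Delta(1)$ cleanly enough through the antipode manipulations so that ``center'' is systematically replaced by ``double centralizer of the base''—in particular checking that $R^{21}R$ genuinely lands in $\Delta(1)(H\otimes H)\Delta(1)$ and that centrality there is equivalent to the double-centralizer conditions on the two legs, which is the least routine point and is where the weak version of~\cite[Lemma 2.1]{LiuZhu2007Almost} is needed.
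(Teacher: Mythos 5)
Your backbone $1)\Leftrightarrow 5)\Leftrightarrow 2)$ agrees with the paper, which disposes of these equivalences by saying they can be verified exactly as in the Hopf case (Proposition~\ref{Proposition_H_R*_is_q_comm}); your care in realizing the coend on $\left(  C_{H}\left(  H_{s}\right)  ,\cdot_{ad}\right)  $ with a $\Delta\left(  1\right)  $-truncated transmuted coproduct is consistent with what that reduction requires, and your observations that $R^{21}R$ lies in the corner $\Delta\left(  1\right)  \left(  H\otimes H\right)  \Delta\left(  1\right)  $ and that $b\otimes 1$ commutes with $\Delta\left(  1\right)  \in H_{s}\otimes H_{t}$ for $b\in C_{H}\left(  H_{s}\right)  $ are exactly the bookkeeping needed for $5)\Leftrightarrow 2)$ and for $6)\Rightarrow 2)$.

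The gap is in how you pass between the two tensor legs. You hang $2)\Leftrightarrow 3)$, $2)\Leftrightarrow 4)$ and $4)\Rightarrow 6)$ on an unproven ``weak-Hopf analog of \cite[Lemma 2.1]{LiuZhu2007Almost}'' asserting centrality of $\left(  \Delta\otimes id\right)  \left(  R^{21}R\right)  $ in a triple corner. No such lemma is available in the weak setting in the paper, and proving it is at least as delicate as the proposition itself, so as written this is a missing ingredient rather than a routine verification. The paper avoids it entirely: for $2)\Rightarrow 3)$ it applies $S\otimes S$ to the commutation relation, using $\left(  S\otimes S\right)  \left(  R\right)  =R$ together with $H_{t}=S\left(  H_{s}\right)  $, hence $C_{H}\left(  H_{t}\right)  =S\left(  C_{H}\left(  H_{s}\right)  \right)  $, which transports the condition from the first leg to the second in two lines; then $4)$ is trivially the conjunction of $2)$ and $3)$. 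For $3)\Rightarrow 6)$ it computes directly that $R^{21}R$ commutes with every $\Delta\left(  1\right)  \left(  h\otimes g\right)  \Delta\left(  1\right)  $, using $R\Delta\left(  h\right)  =\Delta^{cop}\left(  h\right)  R$, the identities $R\left(  y\otimes 1\right)  =R\left(  1\otimes S\left(  y\right)  \right)  $ for $y\in H_{s}$ and $\left(  z\otimes 1\right)  R=\left(  1\otimes S\left(  z\right)  \right)  R$ for $z\in H_{t}$, and the fact that $1\cdot_{ad}H=C_{H}\left(  H_{s}\right)  $, so that hypothesis $3)$ can be applied to the resulting factor $S\left(  1\cdot_{ad}\left(  S^{-1}\left(  g\right)  h_{\left(  3\right)  }\right)  \right)  \in C_{H}\left(  H_{t}\right)  $. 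If you replace the appeal to the triple-coproduct lemma by these two devices, your outline closes.
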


\begin{proof}
1) $\Longleftrightarrow$ 2) $\Longleftrightarrow$ 5) can be verified exactly
in the same way as the Hopf case.

Clearly 4) $\Longleftrightarrow$ 2) and 3). We show 2) $\Longrightarrow$ 3).
Suppose that $R^{21}R\in C_{H}\left(  C_{H}\left(  H_{s}\right)  \right)
\otimes H$, then for any $b\in C_{H}\left(  H_{s}\right)  $,
\begin{align*}
R_{2}{}^{2}R_{1}{}^{1}\otimes R_{2}{}^{1}R_{1}{}^{2}S\left(  b\right)   &
=S\left(  R_{1}{}^{1}R_{2}{}^{2}\right)  \otimes S\left(  bR_{1}{}^{2}R_{2}%
{}^{1}\right) \\
& =S\left(  R_{1}{}^{1}R_{2}{}^{2}\right)  \otimes S\left(  R_{1}{}^{2}R_{2}%
{}^{1}b\right) \\
& =R_{2}{}^{2}R_{1}{}^{1}\otimes S\left(  b\right)  R_{2}{}^{1}R_{1}{}^{2}.
\end{align*}
Since $H_{t}=S\left(  H_{s}\right)  $, $C_{H}\left(  H_{t}\right)  =S\left(
C_{H}\left(  H_{s}\right)  \right)  $, thus 2) implies that 3). Similarly, 3)
$\Longrightarrow$ 2), so 1) through 5) are equivalent.

We will show 3) $\Longleftrightarrow$ 6). Assume 6) holds. Then for any $a\in
C_{H}\left(  H_{t}\right)  $,
\begin{align*}
R_{2}{}^{2}R_{1}{}^{1}\otimes R_{2}{}^{1}R_{1}{}^{2}a  & =R_{2}{}^{2}R_{1}%
{}^{1}1_{\left(  1\right)  }1_{\left(  1^{\prime}\right)  }\otimes R_{2}{}%
^{1}R_{1}{}^{2}1_{\left(  2\right)  }a1_{\left(  2^{\prime}\right)  }\\
& =1_{\left(  1\right)  }1_{\left(  1^{\prime}\right)  }R_{2}{}^{2}R_{1}{}%
^{1}\otimes1_{\left(  2\right)  }a1_{\left(  2^{\prime}\right)  }R_{2}{}%
^{1}R_{1}{}^{2}\\
& =R_{2}{}^{2}R_{1}{}^{1}\otimes aR_{2}{}^{1}R_{1}{}^{2},
\end{align*}
proving 3). Conversely, assume 3). Using the equalities
\[
R\left(  y\otimes1\right)  =R\left(  1\otimes S\left(  y\right)  \right)
,\text{ }\left(  z\otimes1\right)  R=\left(  1\otimes S\left(  z\right)
\right)  R,\text{ for }y\in H_{s},z\in H_{t},
\]
we obtain that for any $h,g\in H$,%
\begin{align*}
R^{21}R\left(  \Delta\left(  1\right)  \left(  h\otimes g\right)
\Delta\left(  1\right)  \right)   & =R_{2}{}^{2}R_{1}{}^{1}1_{\left(
1\right)  }h1_{\left(  1^{\prime}\right)  }\otimes R_{2}{}^{1}R_{1}{}%
^{2}1_{\left(  2\right)  }g1_{\left(  2^{\prime}\right)  }\\
& =h_{\left(  1\right)  }R_{2}{}^{2}R_{1}{}^{1}\otimes h_{\left(  2\right)
}R_{2}{}^{1}R_{1}{}^{2}S\left(  1_{\left(  1^{\prime}\right)  }\right)
S\left(  h_{\left(  3\right)  }\right)  g1_{\left(  2^{\prime}\right)  }\\
& =h_{\left(  1\right)  }R_{2}{}^{2}R_{1}{}^{1}\otimes h_{\left(  2\right)
}R_{2}{}^{1}R_{1}{}^{2}S\left(  1\cdot_{ad}\left(  S^{-1}\left(  g\right)
h_{\left(  3\right)  }\right)  \right) \\
& =h_{\left(  1\right)  }R_{2}{}^{2}R_{1}{}^{1}\otimes h_{\left(  2\right)
}S\left(  1\cdot_{ad}\left(  S^{-1}\left(  g\right)  h_{\left(  3\right)
}\right)  \right)  R_{2}{}^{1}R_{1}{}^{2}\\
& =h_{\left(  1\right)  }S\left(  1_{\left(  2\right)  }\right)  R_{2}{}%
^{2}R_{1}{}^{1}\otimes h_{\left(  2\right)  }S\left(  1_{\left(  1\right)
}\right)  S\left(  h_{\left(  3\right)  }\right)  gR_{2}{}^{1}R_{1}{}^{2}\\
& =1_{\left(  1\right)  }hR_{2}{}^{2}R_{1}{}^{1}\otimes1_{\left(  2\right)
}gR_{2}{}^{1}R_{1}{}^{2}\\
& =\left(  \Delta\left(  1\right)  \left(  h\otimes g\right)  \Delta\left(
1\right)  \right)  R^{21}R.
\end{align*}
Thus 6) holds.
\end{proof}

The concept of almost-triangular weak Hopf algebra was proposed in
\cite{Zhao-Liu-Wang2017Pseudotriangular}, as being a quasi-triangular weak
Hopf algebra $\left(  H,R\right)  $ such that $R^{21}R\in Z\left(
\Delta\left(  1\right)  \left(  H\otimes H\right)  \Delta\left(  1\right)
\right)  $. By Proposition \ref{prop_AT_WHA}, this coincides with the notion
of almost-symmetric category.

Let $\left(  H,R\right)  $ be a semisimple quasi-triangular Hopf algebra, and
let $\lambda$ be an integral for $H^{\ast}$ such that $\left\langle
\lambda,1_{H}\right\rangle =1$. Then it follows from~\cite{LiuZhu2019On} that
$x=R^{2}\rightharpoonup\lambda_{\left(  1\right)  }\otimes S^{\ast}\left(
\lambda_{\left(  2\right)  }\right)  \leftharpoonup\!\!\!\leftharpoonup R^{1}$
is a separability idempotent for $H_{R}^{\ast}$.

\begin{lemma}
\label{x-symmetric}The separability idempotent $x$ is symmetric.
\end{lemma}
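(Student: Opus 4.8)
The plan is to unfold the symmetry condition $x^{1}\otimes x^{2}=x^{2}\otimes x^{1}$ into a pointwise identity in $H$ and then reduce it, using the properties of the integral $\lambda$, to a statement about the antipode $S_{R}$ of $H_{R}$ that follows from the quasitriangularity axioms. First I would record the standing facts: since $H$ is semisimple over a field of characteristic zero it is also cosemisimple and $S^{2}=\mathrm{id}_{H}$, so $\lambda$ is a two-sided integral of $H^{*}$ with $\lambda\circ S=\lambda$, and $\langle\lambda,-\rangle$ is the symmetrizing trace of the semisimple algebra $H$; in particular $\langle\lambda,ab\rangle=\langle\lambda,ba\rangle$ for all $a,b\in H$, and $\langle\lambda_{(1)},p\rangle\langle\lambda_{(2)},q\rangle=\langle\lambda,pq\rangle$. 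Also the Drinfeld element $u=S(R^{2})R^{1}$ is central in $H$ (as $uhu^{-1}=S^{2}(h)=h$) and $u^{-1}=R^{2}R^{1}$.

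Pairing each side of the claimed equality against an arbitrary $a\otimes b\in H\otimes H$ and unwinding the definitions of $\rightharpoonup$, $\leftharpoonup\!\!\!\leftharpoonup$, $S^{*}$ together with $S^{2}=\mathrm{id}_{H}$, the symmetry of $x$ becomes
\[
\langle\lambda,(R^{1}\cdot_{ad}b)\,S(aR^{2})\rangle=\langle\lambda,(R^{1}\cdot_{ad}a)\,S(bR^{2})\rangle\qquad(a,b\in H).
\]
Since $(R^{1}\cdot_{ad}b)S(R^{2})=S\!\bigl(S_{R}(b)\bigr)$ — a one-line consequence of $S_{R}(h)=R^{2}S(R^{1}\cdot_{ad}h)$ and $S^{2}=\mathrm{id}_{H}$ — and $\lambda\circ S=\lambda$, this identity is equivalent to
\[
\langle\lambda,aS_{R}(b)\rangle=\langle\lambda,bS_{R}(a)\rangle\qquad(a,b\in H),
\]
i.e.\ $S_{R}$ is self-adjoint for the symmetrizing trace form $\Phi(a,b)=\langle\lambda,ab\rangle$ of $H$. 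This is exactly the braided analogue of the elementary fact that the antipode of a semisimple Hopf algebra is self-adjoint for its symmetrizing trace.

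To prove this last identity I would make $S_{R}$ explicit: expanding $R^{1}\cdot_{ad}h$, splitting $R^{1}$ via the axiom $(\Delta\otimes\mathrm{id}_{H})(R)={R_{1}}^{1}\otimes{R_{2}}^{1}\otimes{R_{1}}^{2}{R_{2}}^{2}$, and collapsing ${R_{2}}^{2}{R_{2}}^{1}=u^{-1}$, one gets $S_{R}(h)=u^{-1}R^{2}S(R^{1}h)$; hence the claim reads $\langle\lambda,u^{-1}aR^{2}S(R^{1}b)\rangle=\langle\lambda,u^{-1}bR^{2}S(R^{1}a)\rangle$. Moving the central $u^{-1}$, using the trace property and $\lambda\circ S=\lambda$, and then rewriting the remaining copy of $R$ by means of the relation $R^{1}h_{(1)}\otimes R^{2}h_{(2)}=h_{(2)}R^{1}\otimes h_{(1)}R^{2}$ together with the standard consequences of the coproduct axioms (such as $(S\otimes\mathrm{id}_{H})(R)=R^{-1}$, $(S\otimes S)(R)=R$, and $R^{2}S(R^{1})=S(R^{2})R^{1}=u$), one folds the right-hand side onto the left-hand side. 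A parallel route starts from the classical fact that $\lambda_{(1)}\otimes S^{*}(\lambda_{(2)})$ is a \emph{symmetric} separability idempotent of $H^{*}$ and tracks how the two twists $R^{2}\rightharpoonup(-)$ and $(-)\leftharpoonup\!\!\!\leftharpoonup R^{1}$ commute past the $H^{*}$-comultiplication on $\lambda$.

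The main obstacle is this final computation: the quasitriangularity axioms force several bookkeeping copies of $R$ (and of $u$) into play, and the content is that — invoking only that $\lambda$ is an $S$-invariant trace and $S^{2}=\mathrm{id}_{H}$ — these rearrange so as to symmetrize $\langle\lambda,aS_{R}(b)\rangle$ in $a$ and $b$. No hypothesis of almost-triangularity (equivalently, of quantum commutativity of $H_{R}{}^{*}$) enters; the symmetry of $x$ holds for every semisimple quasitriangular $(H,R)$.
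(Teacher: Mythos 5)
Your reduction is sound and, once unwound, follows essentially the same computational route as the paper: both arguments come down to pushing $S$ and the trace property of $\lambda$ through the expression $\langle\lambda, a R^{2}S(R^{1}\cdot_{ad}b)\rangle$, splitting $R^{1}$ with the coproduct axiom so that a copy of $u^{-1}=R^{2}R^{1}$ appears, and using that $u^{-1}$ is central. Your intermediate reformulation --- that symmetry of $x$ is exactly the self-adjointness $\langle\lambda, aS_{R}(b)\rangle=\langle\lambda, bS_{R}(a)\rangle$ of $S_{R}$ for the trace form --- is correct and a nice way to package the statement (indeed $\langle x^{1},a\rangle\langle x^{2},b\rangle=\langle\lambda,aS_{R}(b)\rangle$ on the nose).

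There is, however, one genuine gap: you never establish $S(u)=u$, and your closing claim that the final rearrangement invokes ``only that $\lambda$ is an $S$-invariant trace and $S^{2}=\mathrm{id}_{H}$'' is not correct. Writing $S_{R}(h)=u^{-1}R^{2}S(R^{1}h)$ as you do and folding $\langle\lambda,aS_{R}(b)\rangle$ onto $\langle\lambda,bS_{R}(a)\rangle$ via $\lambda\circ S=\lambda$, the trace property, and $(S\otimes S)(R)=R$ produces
\[
\langle\lambda,aS_{R}(b)\rangle=\langle\lambda,\,S(u^{-1})\,bR^{2}S(R^{1}a)\rangle,
\qquad
\langle\lambda,bS_{R}(a)\rangle=\langle\lambda,\,u^{-1}\,bR^{2}S(R^{1}a)\rangle,
\]
so the two sides agree precisely when $S(u)=u$ (and since $\lambda$ is a nondegenerate form, there is no way to avoid this). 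From $S^{2}=\mathrm{id}$ and $(S\otimes S)(R)=R$ alone one only gets $u=S(R^{2})R^{1}=R^{2}S(R^{1})$ and $S(u)=S(R^{1})R^{2}=R^{1}S(R^{2})$, which are \emph{a priori} different elements. The equality $S(u)=u$ is exactly what the paper's proof spends its opening lines on: it invokes Drinfeld's formula $u(Su)^{-1}=a^{-1}(\alpha\otimes id)(R)$ together with the fact that the distinguished group-likes $a$ and $\alpha$ are trivial because $H$ is semisimple and cosemisimple. Your argument goes through once you add this ingredient; as written, the key non-formal input is missing.
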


\begin{proof}
Let $u=S\left(  R^{2}\right)  R^{1}$ be the Drinfeld element of $H$. By the
result of Drinfeld~\cite{drinfeld1990almost}, $u\left(  Su\right)
^{-1}=a^{-1}\left(  \alpha\otimes id\right)  \left(  R\right)  $, where $a\in
H$, $\alpha\in H^{\ast}$ are the distinguished group-like elements. As $H$ is
semisimple and cosemisimple, $a=1_{H}$ and $\alpha=\varepsilon$. This shows
that $u=Su $. Clearly, $u\in Z\left(  H\right)  $. Since $S(\lambda)=\lambda$
and $\lambda$ is cocommutative, for any $h,g\in H$, we have
\begin{align*}
\left\langle x^{1},h\right\rangle \left\langle x^{2},g\right\rangle  &
=\left\langle \lambda,hR^{2}S\left(  R^{1}\cdot_{ad}g\right)  \right\rangle
=\left\langle \lambda,h{R_{1}}^{2}{R_{2}}^{2}{R_{2}}^{1}S\left(  g\right)
S\left(  {R_{1}}^{1}\right)  \right\rangle \\
& =\left\langle \lambda,h{R_{1}}^{2}{u}^{-1}S\left(  g\right)  S\left(
{R_{1}}^{1}\right)  \right\rangle =\left\langle \lambda,{R_{1}}^{1}gS\left(
{u}^{-1}\right)  S\left(  {R_{1}}^{2}\right)  S\left(  h\right)  \right\rangle
\\
& =\left\langle \lambda,g{R_{1}}^{2}{u}^{-1}S\left(  h\right)  S\left(
{R_{1}}^{1}\right)  \right\rangle =\left\langle x^{1},g\right\rangle
\left\langle x^{2},h\right\rangle ,
\end{align*}
that is, $x$ is symmetric.
\end{proof}

\begin{remark}
Let $D$ be an $H$-module subcoalgebra of $H_{R}$. Then the restriction of $x$
on $D\otimes D$ yields a symmetric separability idempotent of $D^{\ast}$.
\end{remark}

Let $C\left(  H^{\ast}\right)  $ be the set of all the cocommutative elements
of $H^{\ast}$. Then $C\left(  H^{\ast}\right)  $ is a semisimple subalgebra of
$H^{\ast}$ by~\cite{Zhu1994Hopf}. Take a complete set $\left\{  F_{i}\mid
i=1,\ldots,r\right\}  $ of minimal idempotents of $C\left(  H^{\ast}\right)
$. Then $\left\{  \Lambda\leftharpoonup F_{i}H^{\ast}\mid i=1,\ldots
,s\right\}  $ are the set of non-isomorphic irreducible Yetter-Drinfeld
submodules of $H\in{}_{H}^{H}\mathcal{YD}$ by \cite{Cohen1994Supersymmetry}.
Hence $s=r$, and we may assume that $D_{i}=\Lambda\leftharpoonup F_{i}H^{\ast
}$, $i=1,\ldots,r$.

To make difference, we use $\rightharpoonup_{R}$ to denote the left transpose
of the right multiplication of $H_{R}^{\ast}$, i.e., for $h,\ h^{\prime}\in
H$, $f,f^{\prime}\in H^{\ast}$, $\left\langle f,f^{\prime}\rightharpoonup
_{R}h\right\rangle =\left\langle f\ast_{R}f^{\prime},h\right\rangle $.

\begin{proposition}
For each $1\leq i\leq r$, $F_{i}$ is a central idempotent of $H_{R}{}^{\ast}
$, and $F_{i}\rightharpoonup_{R}H_{R}=\Lambda\leftharpoonup F_{i}H^{\ast} $.
\end{proposition}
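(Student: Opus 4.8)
The plan is to show that $F_i$ equals the central idempotent of $H_{R}{}^{\ast}$ cut out by the subcoalgebra $D_i$, after which both assertions drop out at once. By Lemma~\ref{lem_decom_of_H} we have a direct sum $H_R=D_1\oplus\cdots\oplus D_r$ with each $D_j$ a subcoalgebra of $H_R$; since $\Delta_R(D_j)\subseteq D_j\otimes D_j$ there are no cross terms, so this is a decomposition of coalgebras, and dualizing gives an algebra isomorphism $H_{R}{}^{\ast}\cong\prod_{j=1}^{r}D_{j}{}^{\ast}$ for the product $\ast_{R}$. Write $e_j\in H_{R}{}^{\ast}$ for the corresponding central idempotent, so that $\langle e_j,d\rangle=\delta_{jl}\,\varepsilon(d)$ for $d\in D_l$. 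Using the counit axiom for $\Delta_R$ and the inclusions $\Delta_R(D_l)\subseteq D_l\otimes D_l$, one sees directly that $e_j\rightharpoonup_{R}h=h$ for $h\in D_j$ and $e_j\rightharpoonup_{R}h=0$ for $h\in D_l$ with $l\neq j$; hence $e_j\rightharpoonup_{R}H_R=D_j$.

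The main step is to prove $F_i=e_i$. Since $H=\bigoplus_j D_j$, it is enough to verify $\langle F_i,d\rangle=\delta_{ij}\,\varepsilon(d)$ for every $d\in D_j$ and every $j$. Writing such a $d$ as $d=\Lambda\leftharpoonup(F_j\ast g)$ with $g\in H^{\ast}$ --- possible because $D_j=\Lambda\leftharpoonup F_jH^{\ast}$ --- and unwinding the action $\leftharpoonup$ through $\Delta\Lambda$ and the evaluation pairing, one gets $\langle F_i,d\rangle=\langle(F_j\ast g)\ast F_i,\Lambda\rangle$, where $\ast$ is the convolution of $H^{\ast}$. Now $H$ is semisimple in characteristic zero, so $S^{2}=id$ and the integral $\Lambda$ of $H$ is cocommutative (the analogue for $\Lambda$ of the cocommutativity of $\lambda$ used in the proof of Lemma~\ref{x-symmetric}); consequently $\langle-,\Lambda\rangle$ is a trace on $H^{\ast}$, i.e. the legs of the iterated coproduct of $\Lambda$ may be permuted freely. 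Cycling $F_i$ to the front and using that $F_1,\dots,F_r$ are orthogonal idempotents of $C(H^{\ast})$ (so $F_i\ast F_j=\delta_{ij}F_i$) gives $\langle F_i,d\rangle=\delta_{ij}\langle F_i\ast g,\Lambda\rangle$, and this equals $\delta_{ij}\,\varepsilon(d)$ since for $i=j$ one has $\langle F_i\ast g,\Lambda\rangle=\varepsilon(\Lambda\leftharpoonup(F_i\ast g))=\varepsilon(d)$. Hence $F_i=e_i$.

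Once $F_i=e_i$ is established, $F_i$ is by construction a central idempotent of $H_{R}{}^{\ast}$ (in particular $F_i\ast_{R}F_i=F_i$), and $F_i\rightharpoonup_{R}H_R=e_i\rightharpoonup_{R}H_R=D_i=\Lambda\leftharpoonup F_iH^{\ast}$, which is the remaining assertion. I expect the only genuinely delicate input to be the cocommutativity of the integral $\Lambda$ --- equivalently, that evaluation at $\Lambda$ is tracial --- together with fixing the convention for the $H^{\ast}$-action $\leftharpoonup$ on $H$ so that the rewriting $\langle F_i,\Lambda\leftharpoonup\mu\rangle=\langle\mu\ast F_i,\Lambda\rangle$ is legitimate; everything else is bookkeeping with the coalgebra decomposition and the orthogonality of the $F_j$.
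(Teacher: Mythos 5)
Your argument is correct, but it takes a genuinely different route from the paper. The paper's proof is a short local computation: from the identity $\Delta_{R}\left(  h\right)  =h_{\left(  1\right)  }S\left(  R^{2}\right)  \otimes{R}^{1}{}_{\left(  1\right)  }h_{\left(  2\right)  }S\left(  {R}^{1}{}_{\left(  2\right)  }\right)  ={R}^{2}{}_{\left(  1\right)  }h_{\left(  2\right)  }S\left(  {R}^{2}{}_{\left(  2\right)  }\right)  \otimes{R}^{1}h_{\left(  1\right)  }$ it extracts two expressions for $f\ast_{R}f^{\prime}$ in terms of the ordinary convolution, and then the single property $S\left(  h_{\left(  2\right)  }\right)  \rightharpoonup F_{i}\leftharpoonup h_{\left(  1\right)  }=\varepsilon\left(  h\right)  F_{i}$ of cocommutative elements collapses both to $f\ast_{R}F_{i}=f\ast F_{i}=F_{i}\ast_{R}f$ for all $f$; centrality, idempotency in $H_{R}{}^{\ast}$, and the description of $F_{i}\rightharpoonup_{R}H_{R}$ all fall out of that one identity, with no appeal to minimality, orthogonality, or the decomposition of $H_{R}$. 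You instead identify $F_{i}$ with the block idempotent $e_{i}$ dual to the coalgebra decomposition $H_{R}=D_{1}\oplus\cdots\oplus D_{r}$ of Lemma~\ref{lem_decom_of_H}, which is conceptually appealing (it explains \emph{why} $F_{i}$ is central) but requires extra inputs: the orthogonality $F_{i}\ast F_{j}=\delta_{ij}F_{i}$ (which does hold here, since $C\left(  H^{\ast}\right)  $ is spanned by irreducible characters and is commutative because $H$ is quasi-triangular, so a complete set of minimal idempotents is a complete orthogonal set of primitive ones) and the cocommutativity of $\Lambda$ (which also holds, dually to the cocommutativity of $\lambda$ used in Lemma~\ref{x-symmetric}, because $H$ is cosemisimple in characteristic zero). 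Your bookkeeping $\left\langle F_{i},\Lambda\leftharpoonup\mu\right\rangle =\left\langle \mu\ast F_{i},\Lambda\right\rangle $ is consistent with the paper's conventions, and no circularity arises since Lemma~\ref{lem_decom_of_H} and the identification $D_{i}=\Lambda\leftharpoonup F_{i}H^{\ast}$ are established independently, so your proof goes through. The trade-off: the paper's computation is more economical and proves the stronger statement that $f\ast_{R}F=f\ast F=F\ast_{R}f$ for \emph{every} cocommutative $F$, while yours delivers the sharper structural conclusion that the $F_{i}$ are precisely the dual block idempotents of the decomposition $H_{R}=\oplus_{j}D_{j}$.
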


\begin{proof}
Since for all $h\in H$,%
\begin{align*}
\Delta_{R}\left(  h\right)   & =h_{\left(  1\right)  }S\left(  R^{2}\right)
\otimes{R}^{1}{}_{\left(  1\right)  }h_{\left(  2\right)  }S\left(  {R}^{1}%
{}_{\left(  2\right)  }\right) \\
& ={R}^{2}{}_{\left(  1\right)  }h_{(2)}S\left(  {R}^{2}{}_{\left(  2\right)
}\right)  \otimes{R}^{1}h_{(1)},
\end{align*}
we have for any $f,f^{\prime}\in H^{\ast}$ that
\begin{align*}
f\ast_{R}f^{\prime}  & =\left(  S\left(  {R}^{2}\right)  \rightharpoonup
f\right)  \ast\left(  S\left(  {R}^{1}{}_{\left(  2\right)  }\right)
\rightharpoonup f^{\prime}\leftharpoonup{R}^{1}{}_{\left(  1\right)  }\right)
\\
& =\left(  f^{\prime}\leftharpoonup{R}^{1}\right)  \ast\left(  S\left(
{R}^{2}{}_{\left(  2\right)  }\right)  \rightharpoonup f\leftharpoonup{R}%
^{2}{}_{\left(  1\right)  }\right)  .
\end{align*}
Note that $F_{i}\in C\left(  H^{\ast}\right)  $, hence for all $h\in H$,
$S\left(  h_{\left(  2\right)  }\right)  \rightharpoonup F_{i}\leftharpoonup
h_{\left(  1\right)  }=\varepsilon\left(  h\right)  F_{i}.$ Thus for any $f\in
H^{\ast}$, $f\ast_{R}F_{i}=f\ast F_{i}=F_{i}\ast_{R}f$, so $F_{i}$ is a
central idempotent of $H_{R}{}^{\ast}$, and the result follows.
\end{proof}

Now assume that $\left(  H,R\right)  $ is almost-triangular, then
$\Delta\left(  u\right)  =\left(  u\otimes u\right)  R^{21}R\in Z\left(
H\otimes H\right)  $, so the adjoint action of $u$ on $H_{R}$ is trivial. By
Proposition~\ref{Proposition_H_R*_is_q_comm}, $H_{R}^{\ast}$ is a quantum
commutative $H^{op}$-module algebra. Applying
Theorem~\ref{Thm_WHA_stru_of_AsmH} with $A=H_{R}^{\ast}$, we get

\begin{proposition}
Let $\left(  H,R\right)  $ be an almost-triangular Hopf algebra, $x$ be the
symmetric separability idempotent, and $\Lambda$ be the integral for $H$ such
that $\Lambda\rightharpoonup\lambda=\varepsilon$. Then the smash product
$H_{R}^{\ast}\#H^{op}$ is an almost-triangular weak Hopf algebra via the
structure maps
\begin{align*}
\tilde{\Delta}\left(  f\#h\right)   & =f\ast_{R}\left(  x^{1}\leftharpoonup
\!\!\!\leftharpoonup R^{1}\right)  \#h_{\left(  1\right)  }R^{2}\otimes
x^{2}\#h_{\left(  2\right)  },\\
\tilde{\varepsilon}\left(  f\#h\right)   & =\left\langle f,\Lambda
\right\rangle \varepsilon\left(  h\right)  ,\\
\tilde{S}\left(  f\#h\right)   & =\left(  \varepsilon\#S\left(  h\right)
\right)  \left(  f\leftharpoonup\!\!\!\leftharpoonup R^{1}\#R^{2}\right)  ,
\end{align*}
where $h\in H$ and $f\in H^{\ast}$. Its R-matrix can be chosen as
\[
\mathcal{R}=\tilde{1}_{\left(  2\right)  }\left(  \varepsilon\#R^{2}\right)
\tilde{1}_{\left(  1^{\prime}\right)  }\otimes\tilde{1}_{\left(  1\right)
}\left(  \varepsilon\#R^{1}\right)  \tilde{1}_{\left(  2^{\prime}\right)  },
\]
where $\tilde{1}_{\left(  1\right)  }\otimes\tilde{1}_{\left(  2\right)
}=\tilde{1}_{\left(  1^{\prime}\right)  }\otimes\tilde{1}_{\left(  2^{\prime
}\right)  }=\tilde{\Delta}\left(  \varepsilon\#1_{H}\right)  $.
\end{proposition}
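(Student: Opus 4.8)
The plan is to deduce the statement from Theorem~\ref{Thm_WHA_stru_of_AsmH}, applied to the semisimple quasi-triangular Hopf algebra $\left(H^{op},R^{21}\right)$ and the left $H^{op}$-module algebra $A=H_{R}^{\ast}$, whose $H^{op}$-action is the transpose $\leftharpoonup\!\!\!\leftharpoonup$ of the adjoint action $\cdot_{ad}$ on $H_{R}$. (As in the surrounding discussion, $H$ is semisimple, hence cosemisimple and involutory.) First I would check the four hypotheses of that theorem: (a) $H_{R}^{\ast}$ is quantum commutative over $\left(H^{op},R^{21}\right)$; (b) $H_{R}^{\ast}$ is strongly separable; (c) the Drinfeld element of $\left(H^{op},R^{21}\right)$ acts trivially on $H_{R}^{\ast}$; and (d) $H_{R}^{\ast}$ lies in the M{\"{u}}ger center of ${}_{H^{op}}\mathcal{M}$ equipped with the braiding of $R^{21}$. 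Item (a) is part 3) of Proposition~\ref{Proposition_H_R*_is_q_comm}. Item (b) holds because the separability idempotent $x=R^{2}\rightharpoonup\lambda_{\left(1\right)}\otimes S^{\ast}\left(\lambda_{\left(2\right)}\right)\leftharpoonup\!\!\!\leftharpoonup R^{1}$ of $H_{R}^{\ast}$ is symmetric by Lemma~\ref{x-symmetric}. For (c) I would identify the Drinfeld element of $\left(H^{op},R^{21}\right)$, namely $R^{2}S\left(R^{1}\right)$, with $u=S\left(R^{2}\right)R^{1}$ using $\left(S\otimes S\right)\left(R\right)=R$ and $S^{2}=id$, and then invoke the observation recorded just before the proposition that, since $\left(H,R\right)$ is almost-triangular, $\Delta\left(u\right)=\left(u\otimes u\right)R^{21}R$ is central and hence the adjoint action of $u$ on $H_{R}$ is trivial, so its transpose on $H_{R}^{\ast}$ is trivial too. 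For (d) I would unravel the M{\"{u}}ger-center condition of Theorem~\ref{Thm_WHA_stru_of_AsmH} for $A=H_{R}^{\ast}$ over $\left(H^{op},R^{21}\right)$ and pair it against $H_{R}$; this turns it into $\left({R_{1}}^{2}{R_{2}}^{1}\right)\cdot_{ad}d\otimes{R_{1}}^{1}{R_{2}}^{2}=d\otimes1_{H}$ for all $d\in H$, which is exactly condition 4) of Proposition~\ref{Proposition_H_R*_is_q_comm} and therefore holds.

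Granting (a)--(d), Theorem~\ref{Thm_WHA_stru_of_AsmH} yields that $\left(H_{R}^{\ast}\#H^{op},\mathcal{R}\right)$ is a quasi-triangular weak Hopf algebra. I would then recover the displayed formulas for $\tilde{\Delta}$, $\tilde{\varepsilon}$, $\tilde{S}$ and $\mathcal{R}$ by substituting into (\ref{eq:deltaofAsmashH}), (\ref{eq:antipodeofAsmashH}) and the formula for $\mathcal{R}$ in Theorem~\ref{Thm_WHA_stru_of_AsmH} the data $A=H_{R}^{\ast}$ (so multiplication becomes $\ast_{R}$ and the unit becomes $\varepsilon$), $H$ replaced by $H^{op}$ and $R$ replaced by $R^{21}$ (which interchanges $R^{1}$ with $R^{2}$ and reverses products in the Hopf factor), together with the explicit $x$ above. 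The only bookkeeping subtlety is that the counit $\tilde{\varepsilon}\left(f\#h\right)=\left\langle\alpha_{A},a\right\rangle\varepsilon\left(h\right)$ of Theorem~\ref{Thm_WHA_stru_of_AsmH} should become $\left\langle f,\Lambda\right\rangle\varepsilon\left(h\right)$: this amounts to identifying the trace functional $\alpha_{H_{R}^{\ast}}$ of the left regular representation of $H_{R}^{\ast}$ with $\Lambda$, which I would get from the fact that $\alpha_{H_{R}^{\ast}}$ is the unique functional with $\left\langle\alpha_{H_{R}^{\ast}},x^{1}\right\rangle x^{2}=\varepsilon$ (Lemma~\ref{lem_properties_for_x&alpha}) together with a short computation, using $\Lambda\rightharpoonup\lambda=\varepsilon$, that $\Lambda$ has this property.

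It remains to upgrade quasi-triangularity to almost-triangularity, i.e. to show $\mathcal{R}^{21}\mathcal{R}\in Z\left(\tilde{\Delta}\left(\tilde{1}\right)\left(L\otimes L\right)\tilde{\Delta}\left(\tilde{1}\right)\right)$ for $L=H_{R}^{\ast}\#H^{op}$. The most direct route is to insert the formula for $\mathcal{R}$ into $\mathcal{R}^{21}\mathcal{R}$ and simplify exactly as in the triangular case at the end of the proof of Theorem~\ref{Thm_WHA_stru_of_AsmH}, with the difference that the Hopf factor now carries the element $R^{21}R$ rather than $1_{H}\otimes1_{H}$; the required identity then reduces to the centrality of $R^{21}R$ in $H\otimes H$, which is precisely the almost-triangularity of $\left(H,R\right)$. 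Alternatively one may argue categorically: since $\left(H^{op},R^{21}\right)$ is again almost-triangular (as $R^{21}R$ is central in $H\otimes H$), one could apply Proposition~\ref{prop_AT_WHA} and verify that $\left(C_{L}\left(L_{s}\right),\cdot_{ad}\right)$ lies in the M{\"{u}}ger center of ${}_{L}\mathcal{M}$. I expect this last step to be the main obstacle: everything preceding it is routine verification once the hypotheses are in place, whereas here one must track how the corner algebra $\tilde{\Delta}\left(\tilde{1}\right)\left(L\otimes L\right)\tilde{\Delta}\left(\tilde{1}\right)$ interacts with the central element $R^{21}R$, a computation parallel to --- but heavier than --- the verification of the quasi-triangular axioms already carried out in Section~\ref{A.H_IS_WHA}.
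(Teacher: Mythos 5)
Your proposal is correct and follows essentially the same route as the paper: the paper likewise obtains quasi-triangularity by feeding $A=H_{R}^{\ast}$ over $\left(H^{op},R^{21}\right)$ into Theorem~\ref{Thm_WHA_stru_of_AsmH} (with the hypotheses checked in the preceding paragraph and via Proposition~\ref{Proposition_H_R*_is_q_comm} and Lemma~\ref{x-symmetric}), and then proves almost-triangularity by using (\ref{SimplifyR}) to write $\mathcal{R}^{21}\mathcal{R}=\tilde{1}_{\left(1\right)}\left(\varepsilon\#R_{1}{}^{1}R_{2}{}^{2}\right)\otimes\tilde{1}_{\left(2\right)}\left(\varepsilon\#R_{1}{}^{2}R_{2}{}^{1}\right)$ and showing the second factor is central in $L\otimes L$, which is exactly your reduction to the centrality (and symmetry) of $R^{21}R$. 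Your version is, if anything, slightly more explicit about verifying the M\"{u}ger-center hypothesis needed for the $R$-matrix, a point the paper leaves implicit.
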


\begin{proof}
By Theorem~\ref{Thm_WHA_stru_of_AsmH}, $\left(  H_{R}^{\ast}\#H^{op}%
,\mathcal{R}\right)  $ is quasi-triangular. We must show that it is
almost-triangular. By (\ref{SimplifyR}),
\[
\mathcal{R}^{21}\mathcal{R}=\tilde{1}_{\left(  1\right)  }\left(
\varepsilon\#R_{1}{}^{1}R_{2}{}^{2}\right)  \otimes\tilde{1}_{\left(
2\right)  }\left(  \varepsilon\#R_{1}{}^{2}R_{2}{}^{1}\right)  .
\]
We claim that $\left(  \varepsilon\#R_{1}{}^{1}R_{2}{}^{2}\right)
\otimes\left(  \varepsilon\#R_{1}{}^{2}R_{2}{}^{1}\right)  $ is in the center
of $\left(  H_{R}^{\ast}\#H^{op}\right)  \otimes\left(  H_{R}^{\ast}%
\#H^{op}\right)  $. For any $h\in H$, $f\in H^{\ast}$,
\begin{align*}
R_{1}{}^{1}R_{2}{}^{2}\otimes\left(  \varepsilon\#R_{1}{}^{2}R_{2}{}%
^{1}\right)  \left(  f\#h\right)   & =R_{1}{}^{1}R_{3}{}^{1}R_{4}{}^{2}R_{2}%
{}^{2}\otimes f\leftharpoonup\!\!\!\leftharpoonup\left(  R_{3}{}^{2}R_{4}%
{}^{1}\right)  \#R_{1}{}^{2}R_{2}{}^{1}h\\
& =R_{1}{}^{1}R_{2}{}^{2}\otimes f\#R_{1}{}^{2}R_{2}{}^{1}h\\
& =R_{1}{}^{1}R_{2}{}^{2}\otimes f\#hR_{1}{}^{2}R_{2}{}^{1}\\
& =R_{1}{}^{1}R_{2}{}^{2}\otimes\left(  f\#h\right)  \left(  \varepsilon
\#R_{1}{}^{2}R_{2}{}^{1}\right)  .
\end{align*}
This proves the claim since $R^{21}R$ is symmetric by~\cite{LiuZhu2007Almost}.
It now follows that $\left(  H_{R}^{\ast}\#H^{op},\mathcal{R}\right)  $ is almost-triangular.
\end{proof}

Let $D$ be an $H$-module subcoalgebra of $H_{R}$. Note that by
Proposition~\ref{Proposition_H_R*_is_q_comm} $D^{\ast}$ is a quantum
commutative left $H^{op}$-module algebra. Applying
Theorem~\ref{Thm_WHA_stru_of_AsmH} again, we get that $D^{\ast}\#H^{op}$ is an
almost-triangular weak Hopf algebra.

\begin{proposition}
\label{Cor_ND_is_WHA} Let $D$ be an $H$-module subcoalgebra of $H_{R}$. Then
the $R$-adjoint-stable algebra $N_{D}$ is an almost-triangular weak Hopf
algebra, and the category $_{H}^{D}\mathcal{M}$ is an almost-symmetric
multi-fusion category. In particular, if $D$ is a minimal $H$-module
subcoalgebra of $H_{R}$, then $_{H}^{D}\mathcal{M}$ is fusion.
\end{proposition}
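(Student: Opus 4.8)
The plan is to reduce the three assertions to statements about the weak Hopf algebra $D^{\ast}\#H^{op}$ and the category $\mathcal{M}_{N_{D}}$, and then carry these back to ${}_{H}^{D}\mathcal{M}$ along the equivalence of Lemma~\ref{lem_stru_thm}. For the first assertion I would combine Proposition~\ref{Prop_ND_is_WHA}, which gives an algebra isomorphism $N_{D}\cong D^{\ast}\#H^{op}$, with the discussion in the paragraph preceding the statement, where $D^{\ast}\#H^{op}$ is shown to be an almost-triangular weak Hopf algebra: Theorem~\ref{Thm_WHA_stru_of_AsmH} applies to $A=D^{\ast}$ over $(H^{op},R^{21})$ because the Remark after Lemma~\ref{x-symmetric} supplies a symmetric separability idempotent for $D^{\ast}$, Proposition~\ref{Proposition_H_R*_is_q_comm} shows $D^{\ast}$ is quantum commutative and lies in the M\"{u}ger center of ${}_{H^{op}}\mathcal{M}$, and almost-triangularity of $(H,R)$ forces $\Delta(u)=(u\otimes u)R^{21}R\in Z(H\otimes H)$, so the adjoint action of $u$ on $H_{R}$ --- hence the relevant Drinfeld action on $D^{\ast}$ --- is trivial; Proposition~\ref{prop_AT_WHA} then upgrades quasi-triangularity to almost-triangularity. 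Transporting this structure along the isomorphism makes $N_{D}$ an almost-triangular weak Hopf algebra.

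Next I would check that $\mathcal{M}_{N_{D}}$ is an almost-symmetric multifusion category. The algebra $N_{D}$ is semisimple: by Proposition~\ref{prop-matrix-equivalent} it is isomorphic to $\operatorname{End}_{H}^{D}(H\otimes D)$, and ${}_{H}^{D}\mathcal{M}$ is semisimple, being a full subcategory of the semisimple category ${}_{H}^{H_{R}}\mathcal{M}$ closed under subobjects and finite direct sums (recall that $H_{R}$ is cosemisimple). Hence $\mathcal{M}_{N_{D}}$ is a multifusion category, and since $N_{D}$ is an almost-triangular weak Hopf algebra, Proposition~\ref{prop_AT_WHA} shows it is almost-symmetric. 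To pass to ${}_{H}^{D}\mathcal{M}$ I would apply Lemma~\ref{lem_stru_thm} with $W=D$; here one first notes $D_{H\otimes D}=D$, which holds because $D$ is an $H$-module subcoalgebra of $H_{R}$, so the $H_{R}$-coaction on $H\otimes D$ lands in $D\otimes(H\otimes D)$ and the coefficients of $\rho(1_{H}\otimes d)$ already span $D$. The lemma then produces mutually quasi-inverse equivalences $D^{\ast}\square_{D}\bullet:{}_{H}^{D}\mathcal{M}\rightarrow\mathcal{M}_{N_{D}}$ and $\bullet\otimes_{N_{D}}(H\otimes D):\mathcal{M}_{N_{D}}\rightarrow{}_{H}^{D}\mathcal{M}$, and transporting the braided multifusion structure of $\mathcal{M}_{N_{D}}$ --- the one induced by the Cohen and Westreich tensor structure on ${}_{D^{\ast}\#H^{op}}\operatorname{Mod}$ of Section~3 --- across this equivalence exhibits ${}_{H}^{D}\mathcal{M}$ as an almost-symmetric multifusion category.

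For the last clause, suppose $D$ is minimal. Passing to annihilators, the two-sided ideals of $D^{\ast}$ correspond to the subcoalgebras of $D$, and $H^{op}$-stability of an ideal corresponds to $\cdot_{ad}$-stability of the corresponding subcoalgebra, so minimality of $D$ is precisely the statement that $D^{\ast}$ is $H^{op}$-simple. Then, exactly as in the discussion preceding Theorem~\ref{Thm_fpdim}, $H^{op}$-simplicity of $D^{\ast}$ forces the unit object of $\mathcal{M}_{N_{D}}$ to be simple, so $\mathcal{M}_{N_{D}}$ is a fusion category; transferring along the equivalence above, ${}_{H}^{D}\mathcal{M}$ is fusion.

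The step I expect to be the main obstacle is the transfer in the second paragraph: showing that the equivalence of Lemma~\ref{lem_stru_thm} can be promoted to a braided tensor equivalence, so that being almost-symmetric and being multifusion genuinely descend to ${}_{H}^{D}\mathcal{M}$, while keeping straight the $H$ versus $H^{op}$ and left versus right module conventions that enter through $N_{D}\cong D^{\ast}\#H^{op}$ and through the two functors of Lemma~\ref{lem_stru_thm}. This is a bookkeeping-heavy but routine verification, of the same nature as the braided tensor equivalence established in Section~3.
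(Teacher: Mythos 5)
Your proposal is correct and follows essentially the same route the paper intends: the paper states this proposition as a direct consequence of $N_{D}\cong D^{\ast}\#H^{op}$ (Proposition~\ref{Prop_ND_is_WHA}), the almost-triangular weak Hopf structure on $D^{\ast}\#H^{op}$ obtained from Theorem~\ref{Thm_WHA_stru_of_AsmH} together with the symmetric separability idempotent of Lemma~\ref{x-symmetric} and the triviality of the $u$-action, and the equivalence ${}_{H}^{D}\mathcal{M}\simeq\mathcal{M}_{N_{D}}$ of Lemma~\ref{lem_stru_thm}. Your additional care about transporting the braided structure across the equivalence and about identifying minimality of $D$ with $H^{op}$-simplicity of $D^{\ast}$ fills in details the paper leaves implicit, but introduces no new method.
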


When $\left(  H,R\right)  =\left(  kG,1\otimes1\right)  $ is a finite group
algebra, then any simple subcomodule $W$ of $H_{R}$ has the form $W=kg$, where
$g\in G$. In this case, $N_{W}=kC\left(  g\right)  $ is the group algebra of
the centralizer subgroup of $g$, and the inclusion $N_{W}\hookrightarrow H$ is
a quasi-triangular Hopf algebra embedding.

For general almost-triangular Hopf algebras, we have a similar result, which
is an application of Proposition \ref{prop_wha_stru_on_B} and
Proposition\nolinebreak ~\ref{Prop_AsmaH_is_subWHA}.

\begin{corollary}
Let $\left(  H,R\right)  $ be a semisimple and cosemisimple almost-triangular
Hopf algebra over a field of characteristic $0$. If $D$ is an $H$-module
subcoalgebra of $H_{R}$, then $N_{D}$ can be embedded into $\operatorname{End}%
D\otimes H^{op}$, as an almost-triangular weak Hopf algebra.
\end{corollary}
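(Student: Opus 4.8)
\textit{Proof proposal.} The plan is to identify $N_{D}$ with a smash product and then run it through the constructions of Section~\ref{A.H_IS_WHA}, applied not to $(H,R)$ itself but to the quasi-triangular Hopf algebra $(H^{op},R^{21})$ together with the module algebra $A=D^{\ast}$.

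First I would recall from Proposition~\ref{Prop_ND_is_WHA} that $N_{D}\cong D^{\ast}\#H^{op}$ as algebras. By Proposition~\ref{Proposition_H_R*_is_q_comm}, $D^{\ast}$ is a quantum commutative left $H^{op}$-module algebra with respect to $(H^{op},R^{21})$, and by Lemma~\ref{x-symmetric} together with the remark following it the restriction of the symmetric separability idempotent of $H_{R}^{\ast}$ to $D\otimes D$ makes $D^{\ast}$ strongly separable; as $H^{op}$ has the same underlying algebra as $H$, it is again semisimple. Almost-triangularity passes from $(H,R)$ to $(H^{op},R^{21})$, since the element controlling it, $(R^{21})^{21}R^{21}=RR^{21}=(R^{21}R)^{21}$, again lies in $Z(H)\otimes Z(H)$. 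Finally, Proposition~\ref{Proposition_H_R*_is_q_comm}(4) gives $\left({R_{2}}^{2}{R_{1}}^{1}\right)\cdot_{ad}h\otimes{R_{2}}^{1}{R_{1}}^{2}=h\otimes1_{H}$ for all $h\in H$; restricting to $h\in D$ and transposing the left leg shows precisely that $D^{\ast}$ lies in the M\"{u}ger center of $_{H^{op}}\mathcal{M}$, and in particular (as in the discussion preceding Proposition~\ref{Cor_ND_is_WHA}) the Drinfeld element of $(H^{op},R^{21})$ acts trivially on $D^{\ast}$.

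Next I would apply Section~\ref{A.H_IS_WHA} with $(H^{op},R^{21})$ and $A=D^{\ast}$. Proposition~\ref{prop_wha_stru_on_B} equips $B=D^{\ast}\otimes H^{op}\otimes D^{\ast\ast}$ with a quasi-triangular weak Hopf algebra structure, and since $D$ is finite dimensional $B\cong\operatorname{End}(D^{\ast\ast})\otimes H^{op}\cong\operatorname{End}D\otimes H^{op}$ as algebras, so $\operatorname{End}D\otimes H^{op}$ inherits such a structure. Because the Drinfeld element acts trivially on $D^{\ast}$, Proposition~\ref{Prop_AsmaH_is_subWHA} yields a weak Hopf algebra monomorphism $\varphi\colon N_{D}\cong D^{\ast}\#H^{op}\hookrightarrow B$; and because $D^{\ast}$ lies in the M\"{u}ger center of $_{H^{op}}\mathcal{M}$, the criterion in the proposition that follows Proposition~\ref{Prop_AsmaH_is_subWHA} gives $R_{B}\in\operatorname{Im}\varphi\otimes\operatorname{Im}\varphi$, so $\varphi$ is actually a quasi-triangular weak Hopf algebra embedding, carrying the $R$-matrix $\mathcal{R}$ of $N_{D}$ (from Theorem~\ref{Thm_WHA_stru_of_AsmH}; recall $N_{D}$ is already known to be almost-triangular by Proposition~\ref{Cor_ND_is_WHA}) onto $R_{B}$.

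It remains to verify that $(B,R_{B})$ is not only quasi-triangular but almost-triangular, i.e. $R_{B}^{21}R_{B}\in Z(\Delta_{B}(1)(B\otimes B)\Delta_{B}(1))$. Rather than checking this identity by hand, I would argue categorically: the braided tensor equivalence of Section~\ref{A.H_IS_WHA}, applied once more with $(H^{op},R^{21})$ and $A=D^{\ast}$, identifies $_{B}\mathcal{M}$ with $_{H^{op}}\mathcal{M}$ as braided tensor categories, and $_{H^{op}}\mathcal{M}$ is almost-symmetric because $(H^{op},R^{21})$ is almost-triangular (Proposition~\ref{Proposition_H_R*_is_q_comm}, equivalence of (1) and (2)). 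Almost-symmetry, being cocommutativity of the canonical braided Hopf algebra $U(\mathcal{C})$, is invariant under braided tensor equivalence, so $_{B}\mathcal{M}$ is almost-symmetric, and by Proposition~\ref{prop_AT_WHA} (condition 6, which is the definition of almost-triangular weak Hopf algebra) $(B,R_{B})$ is almost-triangular. Together with the previous paragraph this shows $\varphi$ realizes $N_{D}$ as an almost-triangular weak Hopf subalgebra of $\operatorname{End}D\otimes H^{op}$. The step I expect to demand the most care is not any single argument but the bookkeeping of the $\mathrm{op}$ and $R\leftrightarrow R^{21}$ conventions, ensuring that Propositions~\ref{prop_wha_stru_on_B}, \ref{Prop_AsmaH_is_subWHA} and \ref{Proposition_H_R*_is_q_comm} and the equivalence proposition apply word for word; in particular one must confirm that the hypothesis ``$A$ lies in the M\"{u}ger center of $_{H}\mathcal{M}$'' becomes, for $A=D^{\ast}$ over $H^{op}$, the assertion that $(D,\cdot_{ad})$ lies in the M\"{u}ger center of $_{H}\mathcal{M}$ --- exactly what almost-triangularity of $(H,R)$ supplies.
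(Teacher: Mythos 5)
Your proposal is correct and follows essentially the route the paper intends: the paper offers no written proof beyond the remark that the corollary ``is an application of Proposition~\ref{prop_wha_stru_on_B} and Proposition~\ref{Prop_AsmaH_is_subWHA},'' and you carry out exactly that application to $(H^{op},R^{21})$ with $A=D^{\ast}$, correctly verifying via Proposition~\ref{Proposition_H_R*_is_q_comm}(4) that $D^{\ast}$ lies in the M\"{u}ger center so that the embedding respects the $R$-matrices. Your closing categorical argument (transporting almost-symmetry along the braided equivalence $_{B}\mathcal{M}\simeq{}_{H^{op}}\mathcal{M}$ and invoking Proposition~\ref{prop_AT_WHA}) is a clean way to supply the almost-triangularity of $\operatorname{End}D\otimes H^{op}$ that the paper leaves implicit.
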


As a consequence of Theorem~\ref{Thm_fpdim}, we have

\begin{corollary}
Let $V$ be an irreducible Yetter-Drinfeld module in ${}_{H}^{H}\mathcal{YD}$.
Then the dimension of $D_{V}$ divides the dimension of $V$.
\end{corollary}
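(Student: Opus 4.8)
The plan is to deduce the statement from Theorem~\ref{Thm_fpdim}, applied to the $R$-adjoint-stable algebra $N_{D_V}$. Throughout, $(H,R)$ is the semisimple (hence cosemisimple and involutory) almost-triangular Hopf algebra of this section. First I would note that $D:=D_V$ is a minimal $H$-module subcoalgebra of $H_R$: by Lemma~\ref{lem_decom_of_H}, ${}_{H}^{H_{R}}\mathcal{M}={}_{H}^{D_{1}}\mathcal{M}\oplus\cdots\oplus{}_{H}^{D_{r}}\mathcal{M}$ with the $D_i$ the minimal $H$-module subcoalgebras of $H_R$, and since $V$ is simple it lies in a single summand ${}_{H}^{D_{i_0}}\mathcal{M}$; then $D_V\subseteq D_{i_0}$, and minimality of $D_{i_0}$ together with $D_V\neq 0$ forces $D_V=D_{i_0}$. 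In particular $V\in{}_{H}^{D}\mathcal{M}$, and by Proposition~\ref{Cor_ND_is_WHA}, $N_D\cong D^{\ast}\#H^{op}$ is an almost-triangular weak Hopf algebra.

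Next I would check that the hypotheses of Theorem~\ref{Thm_fpdim} hold for the semisimple quasi-triangular Hopf algebra $(H^{op},R^{21})$ and the module algebra $A=D^{\ast}$: by Proposition~\ref{Proposition_H_R*_is_q_comm} (using almost-triangularity), $D^{\ast}$ is quantum commutative; it is strongly separable, the restriction to $D\otimes D$ of the symmetric separability idempotent of $H_R^{\ast}$ furnishing a symmetric separability idempotent of $D^{\ast}$ (the Remark after Lemma~\ref{x-symmetric}); it is $H^{op}$-simple, since the order-reversing bijection between subcoalgebras of $D$ and ideals of $D^{\ast}$ identifies $H$-module subcoalgebras with $H$-stable ideals and $D$ is minimal; and the Drinfeld element acts trivially on $D^{\ast}$ --- precisely the condition already used to endow $N_D$ with its weak Hopf structure. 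Thus Theorem~\ref{Thm_fpdim} shows that $\dim D=\dim D^{\ast}$ divides $\dim_{k}U$ for every nonzero $U\in{}_{N_{D}}\mathcal{M}$; since $N_D$ is semisimple, its simple left and right modules have the same list of dimensions, so this divisibility persists for nonzero right $N_D$-modules.

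Finally, I would identify $V$ with a right $N_D$-module of the same dimension. By the equivalence of Lemma~\ref{lem_stru_thm}(1) with $W=D$, $U_V:=D^{\ast}\square_D V$ is a nonzero (simple) right $N_D$-module, and I claim $\dim_k U_V=\dim_k V$. Indeed $H_R$, hence $D$, is cosemisimple; decomposing $D=\bigoplus_i C_i$ into its simple subcoalgebras and $V=\bigoplus_i V_i$ correspondingly, the cross terms in the cotensor vanish, so $D^{\ast}\square_D V=\bigoplus_i C_i^{\ast}\square_{C_i}V_i$, and for a simple subcoalgebra $C_i$ one has $\dim(C_i^{\ast}\square_{C_i}V_i)=(\dim C_i^{\ast})(\dim V_i)/\dim C_i=\dim V_i$; summing gives the claim. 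Hence $\dim D_V=\dim D^{\ast}\mid\dim U_V=\dim V$, which is the assertion.

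The genuine content is Theorem~\ref{Thm_fpdim} itself; what must be done with care is the bookkeeping in passing from $(H,R)$ to $(H^{op},R^{21})$ and from $A$ to $D^{\ast}$ (in particular, identifying the Drinfeld-element hypothesis with the one already verified for the weak Hopf structure on $N_D$), and the left/right conversion for $N_D$-modules. The one computational input, $\dim(D^{\ast}\square_D V)=\dim V$, reduces immediately to the case of simple coalgebras, and I expect no real obstacle there.
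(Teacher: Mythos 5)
Your proposal is correct and is exactly the derivation the paper intends (the paper gives no details beyond "as a consequence of Theorem~\ref{Thm_fpdim}"): reduce to the minimal subcoalgebra $D=D_V$, verify the hypotheses of Theorem~\ref{Thm_fpdim} for $(H^{op},R^{21})$ acting on $D^{\ast}$ using the facts already established in Section 4, and transport $V$ to the right $N_D$-module $D^{\ast}\square_D V$ via Lemma~\ref{lem_stru_thm}. The one step you rightly single out, $\dim(D^{\ast}\square_D V)=\dim V$, does reduce to the formula $\dim(M\square_C N)=\dim M\,\dim N/\dim C$ for a simple coalgebra $C$, so the argument is complete.
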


\end{document}